\theoremstyle{plain}
\newtheorem{thm}{Theorem}\newtheorem*{thm*}{Theorem}
\newtheorem*{conj*}{Conjecture}
\newtheorem{pr}[thm]{Proposition}
\newtheorem{lem}[thm]{Lemma}
\newtheorem{cor}[thm]{Corollary}
\theoremstyle{remark}
\theoremstyle{definition}
\newtheorem{defn}[thm]{Definition}
\theoremstyle{remark}
\newtheorem{rem}[thm]{Remark}
\newcommand{\ZZ}{\mathbb{Z}}
\newcommand{\QQ}{\mathbb{Q}}
\newcommand{\RR}{\mathbb{R}}
\newcommand{\CC}{\mathbb{C}}
\newcommand{\PP}{\mathbb{P}}
\newcommand{\sheaf}[1]{\mathcal{#1}}
\newcommand{\ko}{\sheaf{O}}
\begin{document}

\title[The cubo-cubic transformation of the smooth quadric 4-fold is very special]{The cubo-cubic transformation of the smooth quadric fourfold is very special}
\author{Jordi Hern\'andez}
\address{Institut de Math\'ematiques de Toulouse ; UMR 5219 \\ 
UPS, F-31062 Toulouse Cedex 9, France}
\email{jordi\_emanuel.hernandez\_gomez@math.univ-toulouse.fr}
\begin{abstract}
We classify special self-birational transformations of the smooth quadric threefold and fourfold, $Q^3$ and $Q^4$. It turns out that there is only one such example in each dimension. In the case of $Q^3$, it is given by the linear system of quadrics passing through a rational normal quartic curve. In the case of $Q^4$, it is given by the linear system of cubics passing through a non-minimal K3 surface of degree $10$ with two skew $(-1)$-lines. 
\end{abstract}
\maketitle
\section{Introduction}

A Cremona transformation $\PP^k\dashrightarrow \PP^k$ with a smooth and irreducible base locus scheme is said to be special. These maps stand out due to their simple structure, since the blow-up along the locus where they are not defined induces a resolution of the map by an everywhere defined birational morphism which is easier to understand. This simple structure has allowed to classify many cases by fixing the dimension of the base locus scheme or the type of the rational map. For example, the classification of special Cremona transformations with a base locus scheme of dimension at most $2$ was obtained by B. Crauder and S. Katz in \cite{crauderkatz} following this approach. Soon after this success, interest on the classification problem started to grow and new results were found in \cite{einshepherdbarron}, \cite{crauder.katz-harts}, and \cite{hulek.katz.Schreyer}. 

Later on, the classification problem was generalized to consider special birational transformations $ \PP^k \dashrightarrow Z$, where $Z$ is a sufficiently nice and well-behaved variety. In this vein, A. Alzati and J. C. Sierra generalized previous results to the case where $Z$ is a prime Fano manifold in \cite{alzati.sierra}, and G. Staglian\`o classified special quadratic birational maps into locally factorial varieties $Z$ in \cite{stagliano.specialquadhypersurface}, \cite{stagliano.specialquadbaselocusatmost3}, and \cite{stagliano.specialquadintersectionofquadrics}. Only recently, G. Staglian\`o completed the classification of special Cremona transformations with a base locus scheme of dimension $3$ in \cite{stagliano.specialcubicP6P7} and, more generally, the classification of special birational maps into factorial complete intersection varieties $Z$ with the same dimension condition in \cite{stagliano.specialcubicbir}. 

In any case, only special birational transformations with domain $\PP^k$ have been considered. The next problem to consider is to classify special birational transformations whose domain is a rational variety other than projective space. In fact, some potential examples of special birational transformations between rational varieties can be found in recent works related to the classification problem of Fano fourfolds. For example, in the paper \cite{Fano4foldsK3type} by M. Bernardara, E. Fatighenti, L. Manivel, and F. Tanturri, a list of $64$ new families of Fano fourfolds of K3 type is provided and in many examples the fourfolds from these families admit multiple birational contractions that can be realized as blow-ups of
Fano manifolds along non-minimal K3 surfaces. A particular example, labeled as K3-33 in their list, stands out for its appealing symmetry. It can be described as the blow-up of the smooth quadric hypersurface $Q^4\subseteq \PP^5$ along a non-minimal K3 surface of degree $10$ with two skew $(-1)$-lines in two different ways, and the rational map it induces is a special cubo-cubic birational transformation of $Q^4$. According to our terminology, based on that used by M. Gross in his classification of smooth surfaces of degree $10$ in the four-dimensional smooth quadric \cite{gross.degree10}, these surfaces are of type $^{II} Z_{E}^{10}$. Moreover, it was shown that the minimal models of the surfaces are derived equivalent K3 surfaces. Nevertheless, in the present paper we prove that they are non-isomorphic in general. 

This particular example leads naturally to the problem of classifying special self-birational transformations of $k$-dimensional smooth quadrics $Q^k$. In this paper we classify special self-birational transformations of $Q^3$ and $Q^4$. For the three-dimensional case we have the following theorem that answers a question posed by F. Russo and G. Staglian\`o after the first version of this paper appeared.

\begin{thm}\label{classificationSpecialtransQ3}
    Let $\varphi:Q^3\dashrightarrow Q^3$ be a special birational transformation. Then $\varphi$ is a quadro-quadric transformation, and its base locus scheme $C$ is a rational normal quartic curve. 
    
    Conversely, let $C\subseteq Q^3$ be any rational normal quartic curve. Then the linear system of quadrics in $Q^3$ passing through $C$ induces a special birational transformation $\varphi:Q^3\dashrightarrow Q^3$ with $C$ as base locus scheme. Moreover, the base locus scheme of $\varphi ^{-1}$ is a rational normal quartic curve as well.
\end{thm}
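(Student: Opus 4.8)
The plan is to study $\varphi$ and $\varphi^{-1}$ simultaneously through a common resolution and to extract all numerical invariants from the induced change of basis on the Picard lattice. Throughout I write $H$ for the hyperplane class of $Q^3$, so $\mathrm{Pic}(Q^3)=\ZZ H$, $H^3=2$, and $K_{Q^3}=-3H$. First I would settle the dimension of the smooth irreducible base locus $B$: since $\varphi$ is birational but not an isomorphism, $B$ has codimension $\geq 2$, so $\dim B\in\{0,1\}$. Let $\pi:X\to Q^3$ be the blow-up of $B$ with exceptional divisor $E$ and set $\widetilde H=m\,\pi^*H-E$, the multiplicity along the reduced smooth scheme $B$ being $1$. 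Because the blow-up resolves $\varphi$, the class $\widetilde H$ is base-point free and equals $\sigma^*H$ for the induced birational morphism $\sigma:X\to Q^3$, whence $\widetilde H^3=H^3=2$. If $\dim B=0$ this forces $2m^3-1=2$, which has no integral solution; hence $B=C$ is a smooth irreducible curve, of degree $d$ and genus $g$, and I record the standard intersection numbers $(\pi^*H)^3=2$, $(\pi^*H)^2E=0$, $\pi^*H\cdot E^2=-d$, $E^3=-(3d+2g-2)$.

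Next I would identify the second contraction. The ray of $\overline{NE}(X)$ complementary to the $\pi$-fibers is contracted by $\sigma$, and a one-line computation with $-K_X=3\pi^*H-E$ shows it is $K_X$-negative. By Mori's classification of divisorial contractions from a smooth threefold onto the smooth target $Q^3$, either $\sigma$ is the blow-up of a smooth curve $C'$, giving $K_X=-3\widetilde H+E'$, or it is the blow-up of a point. Applying the same ``$2(m')^3=3$ has no integral solution'' argument to $\varphi^{-1}$ rules out the point case, so $\sigma$ is the blow-up of a smooth curve $C'$ and $E'=K_X+3\widetilde H=3(m-1)\pi^*H-2E$.

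The key step is then purely lattice-theoretic: expressing $\pi^*H$ in the basis $\{\widetilde H,E'\}$ of $\mathrm{Pic}(X)$ yields $\pi^*H=\tfrac{-2}{\,m-3\,}\widetilde H+\tfrac{1}{\,m-3\,}E'$. Integrality forces $m-3\mid 1$, i.e. $m\in\{2,4\}$, and since the degree $m'$ of $\varphi^{-1}$ is $\pi^*H$ paired with the strict transform of a general line (nonnegative as $\pi^*H$ is nef, and positive) the value $m=4$ is excluded. Hence $m=2$: the map is quadro-quadric with $m'=2$ as well. Feeding $m=2$ into $\widetilde H^3=2$ gives $3d-2g=12$, while the symmetric computation $d'=-\widetilde H\cdot E'^2$ gives $d'=20-4d$; combining $d\geq 4$ (from $g\geq 0$) with $d'\geq 1$ forces $d=4$, $g=0$. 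Thus $C$ is a smooth rational quartic, and I would conclude by excluding the degenerate possibility (a $(1,3)$-curve on a quadric surface inside a hyperplane) via a dimension count for $H^0(\mathcal{I}_{C/Q^3}(2))$, incompatible with $\varphi$ mapping onto a nondegenerate $Q^3\subseteq\PP^4$; the same invariants show $C'$ is a rational normal quartic too.

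For the converse, given a rational normal quartic $C\subseteq Q^3$ I would use its projective normality and the fact that its ideal is generated by quadrics to see that $\widetilde H=2\pi^*H-E$ is base-point free on $\pi:X\to Q^3$, hence defines a morphism $\sigma$ to $\PP^4$ through the five-dimensional system $H^0(\mathcal{I}_{C/Q^3}(2))$. As $\widetilde H^3=2$ and the image is nondegenerate, $\sigma$ is birational onto a quadric, necessarily $Q^3$, so $\varphi=\sigma\circ\pi^{-1}$ is a special quadro-quadric transformation with base locus $C$; computing $E'=3\pi^*H-2E$ shows $\sigma$ contracts it to a curve $C'$ with $d'=4$, $g'=0$, a rational normal quartic. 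I expect the main obstacles to be the two geometric inputs that the numerics alone cannot supply: the appeal to Mori's classification together with the exclusion of the point-contraction case, and, in the converse, the honest verification (via the projective-normality and dimension counts) that the quadrics through $C$ map $Q^3$ birationally onto a quadric rather than onto some other threefold, together with the nondegeneracy of $C$.
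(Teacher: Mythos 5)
Your forward direction is correct, and it takes a genuinely different route from the paper's. The paper never invokes Mori theory: it treats the second projection $\tau$ as the blow-up of the a priori non-reduced, possibly singular base scheme of $\varphi^{-1}$ (legitimate because the target is factorial), proves $E'$ is reduced and irreducible by Ein--Shepherd-Barron-type arguments (Proposition \ref{equations in PicX}), extracts $(n,m)=(2,2)$ and $(r,r')=(1,1)$ by pure arithmetic on the two expressions for $K_X$ (Corollary \ref{formulas for n,m,r,r'}, Proposition \ref{values of n,m,r,r'}), gets $d=4$, $g=0$ from the pair of equations $2=(2H-E)^3$ and $4=H(2H-E)^2$, and obtains nondegeneracy from the slick observation that a section of $\mathcal{I}_{C/Q^3}(j)$ with $j<n$ would force a divisor into the base locus (Proposition \ref{Sisnotinaquadraticcomplex}). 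Your route through Mori's E1/E2 classification buys more at this stage --- smoothness of the inverse's base curve comes for free --- at the cost of heavier machinery, and your arithmetic ($m-3\mid 1$, positivity of $m'$, and $d'=20-4d\geq 1$) is a correct variant of the paper's. One repair is needed: your ``one-line computation'' of $K_X$-negativity actually gives $-K_X\cdot\gamma=(3-m)\,\pi^{*}H\cdot\gamma$ on $\sigma$-contracted curves, whose sign depends on the not-yet-determined $m$; the correct justification is that a birational morphism between smooth varieties has positive discrepancies, so $K_X\cdot\gamma = a\,E'\cdot\gamma<0$ with $a\geq 1$ and $E'\cdot\gamma<0$.

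The genuine gap is in the converse, at the words ``necessarily $Q^3$''. From $\widetilde H^3=2$ and nondegeneracy of the image you only get that $\sigma$ maps $X$ birationally onto a quadric hypersurface $Z\subseteq\PP^4$, which could a priori be a singular quadric of rank $3$ or $4$; nothing in your argument excludes this, and if $Z$ were singular then $\varphi$ would not be a self-map of $Q^3$ at all. Moreover, your subsequent computation $E'=3\pi^{*}H-2E$ with contraction onto a smooth curve $C'$ presupposes exactly the E1 structure you are trying to establish: Mori's classification cannot be applied to $\sigma$ until the target is known to be smooth, and the types E3--E5 have singular targets, so the dichotomy you used in the forward direction (where smoothness of the target was a hypothesis) is not available here. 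This smoothness is the real content of the converse --- the paper outsources it to \cite[Lemma~2.2]{kuznetsov2022rationality}, and in the analogous fourfold statement it is the hardest part of the whole paper (trisecant varieties, extremal-ray analysis, and the Andreatta--Wi\'sniewski classification of isolated two-dimensional fibers). You flagged the birationality-onto-a-quadric issue as an expected obstacle, but the proof as written treats the smoothness of that quadric as automatic. To close the gap, either cite \cite[Lemma~2.2]{kuznetsov2022rationality} as the paper does, or prove directly that $-K_X=3\pi^{*}H-E$ is ample (using that $C$ is cut out by quadrics, so every curve $\gamma\not\subseteq C$ meets $C$ in length at most $2\deg\gamma$) and then classify the second extremal contraction of the Fano threefold $X$, ruling out types E2--E5 by the numerics and the secant geometry of the rational normal quartic.
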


Theorem \ref{classificationSpecialtransQ3} follows easily from arithmetic considerations. We can think of this special transformation as the quadric analogue of the special cubo-cubic transformation of $\PP^3$, classified by S. Katz in his article ``The cubo-cubic transformation of $\PP^3$ is very special'' \cite{katz33}. The four-dimensional case is much more interesting. The main result of this paper is the following theorem.

\begin{thm} \label{main thm}
Let $\varphi:Q^4 \dashrightarrow Q^4$ be a special birational map. Then $\varphi$ is a cubo-cubic transformation, and its base locus scheme is a surface $S$ of type $^{II} Z_{E}^{10}$.

Conversely, let $S\subseteq Q^4$ be a surface of type $^{II} Z_{E}^{10}$ and let $S_0$ be its minimal model. Then the linear system $|H_S+K_S|$ induces a morphism $S\rightarrow \PP^7$ realizing the blow-up of $S_0$ at two points $p$ and $p'$. If $H_{S_0}$ is the hyperplane class of $S_0\subseteq \PP^7$ and there is no smooth elliptic curve $\mathfrak{E}\subseteq S_0$ of degree $5$ such that $\left\langle H_{S_0}, \mathfrak{E}\right\rangle\subseteq Pic(S_0)$ is saturated and the points $p$ and $p'$ belong to a (possibly singular) fiber of the associated elliptic fibration of $S_0$, then the linear system of cubics passing through $S$ induces a special birational transformation $\varphi:Q^4 \dashrightarrow Q^4$ with $S$ as base locus scheme. If moreover $rkPic(S_0)=1$, then the base locus scheme $T$ of $\varphi^{-1}$ is a smooth surface of type $^{II} Z_{E}^{10}$ as well. In fact, the K3 surfaces $S_0$ and $T_0$ are non-isomorphic Fourier-Mukai partners. More precisely, $T_{0}$ is the moduli space of stable sheaves of $S_0$ with Mukai vector $(2,H_{S_0},3)$.
\end{thm}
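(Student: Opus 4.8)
The plan is to prove this theorem in two directions, matching its two-part structure. For the forward direction, I would start from a special birational transformation $\varphi:Q^4\dashrightarrow Q^4$ and run the standard blow-up resolution machinery: since the base locus scheme $S$ is smooth and irreducible, the blow-up $\pi:X=\mathrm{Bl}_S Q^4\to Q^4$ resolves $\varphi$ into a morphism $\psi:X\to Q^4$. The key is to extract numerical constraints from this setup. I would intersect the pullback of the hyperplane class, the exceptional divisor, and the canonical class on $X$, using the fact that $\varphi$ is given by a subsystem of $|\ko_{Q^4}(d)|$ for some $d$, and that $\psi$ contracts divisors only along the exceptional locus over the inverse base locus $T$. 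Writing down the intersection-theoretic identities on the fourfold $X$ (degree of $\varphi$, the class $\psi^*H = d\,\pi^*H - mE$, and the canonical bundle formula $K_X = \pi^*K_{Q^4}+2E$), together with the constraint that $\psi^*H$ is base-point-free and the image is again $Q^4$, should force $d=3$ (cubo-cubic) and pin down the invariants of $S$: its degree, sectional genus, and the fact that $S$ is a non-minimal K3 surface. Comparing these invariants against Gross's classification of degree-$10$ surfaces in $Q^4$ should isolate the type $^{II}Z_E^{10}$.

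For the converse, I would begin with a surface $S\subseteq Q^4$ of type $^{II}Z_E^{10}$ and its minimal model $S_0$. First I would verify the claim about $|H_S+K_S|$: since $S$ is the blow-up of a K3 surface $S_0$ at two points, $K_S=E_1+E_2$ where $E_i$ are the two $(-1)$-lines, and $H_S+K_S$ is the adjoint system. A Riemann–Roch computation on $S$ should show this system has the expected dimension and embeds the contracted surface $S_0$ in $\PP^7$ as a degree-$10$ K3, realizing $S\to\PP^7$ as the blow-down of the two $(-1)$-lines to points $p,p'$. The heart of the converse is then to show that the cubics through $S$ cut out a birational map $\varphi:Q^4\dashrightarrow Q^4$ under the stated genericity hypothesis (no saturated degree-$5$ elliptic pencil with $p,p'$ on a common fiber). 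Here I would compute $h^0(\ko_{Q^4}(3)\otimes\sheaf{I}_S)$ and show the linear system is base-point-free off $S$ and separates the right data; the genericity condition is precisely what guarantees the system is \emph{not} contracted or degenerate along an unexpected subvariety arising from such an elliptic configuration.

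The final and hardest part is the description of the inverse base locus $T$ and the Fourier–Mukai statement. Under the hypothesis $\mathrm{rk}\,\mathrm{Pic}(S_0)=1$, I would first identify $T$ as a smooth surface of type $^{II}Z_E^{10}$ by symmetry of the resolution diagram: the morphism $\psi:X\to Q^4$ contracts an exceptional divisor onto $T$, and running the numerical analysis in reverse (the map is an involution-like cubo-cubic transformation) forces $T$ to have the same invariants as $S$. To obtain the moduli-space description, I would invoke Mukai's theory: the projection from $Q^4$ together with the universal family over $X$ produces a sheaf on $Q^4$ whose restriction data define, on the minimal model $T_0$, a moduli space of stable sheaves on $S_0$. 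I expect the computation of the Mukai vector $(2,H_{S_0},3)$ to be the main obstacle, since one must track how the cubic linear system and the geometry of the two blown-up points translate into the rank, first Chern class, and Euler characteristic of the corresponding stable sheaves. Verifying that $v=(2,H_{S_0},3)$ is primitive and isotropic with respect to the Mukai pairing (so that the moduli space is a K3 surface), and that $S_0\not\cong T_0$ in general, would then follow from a lattice-theoretic argument on $H^*(S_0,\ZZ)$ combined with a count showing the Fourier–Mukai partner is genuinely distinct when $\mathrm{Pic}(S_0)=\ZZ\cdot H_{S_0}$.
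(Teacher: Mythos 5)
Your forward direction follows the same strategy as the paper (decompose $Pic(Bl_{S}Q^4)$, intersect powers of $nH-E$, feed the resulting invariants into Gross's classification), but with two slips and one missing step. The canonical bundle formula for a surface center in a fourfold is $K_X=\pi^{\ast}K_{Q^4}+E$, not $\pi^{\ast}K_{Q^4}+2E$; in fact the coefficient $k-r-1$ is exactly what the paper uses to derive $(n,m)=(3,3)$ and $(r,r')=(2,2)$ \emph{before} knowing the center is a surface, so you cannot assume it. More importantly, the intersection identities alone only give $\mathfrak{d}<18$ and relations such as $\pi=3\mathfrak{d}-23$; isolating $\mathfrak{d}=10$ requires Gross's sectional-genus bound $\pi\leq\pi_1(\mathfrak{d})$ (excluding $\mathfrak{d}=13,14,15$), Noether-formula divisibility (excluding $8,9$), and the interplay between $h^0(\mathcal{I}_{S/Q^4}(2))=0$ (forced for a special map) and the theorem that $\pi=\pi_1(\mathfrak{d})$ forces containment in a quadratic complex (excluding $11,12,16,17$). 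Your sketch contains none of these, and they are not routine consequences of the blow-up numerics. (Minor: $S_0\subseteq\PP^7$ has degree $12$, not $10$.)

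The serious gap is in the converse, where you misplace the role of the genericity hypothesis. The cubics through $S$ \emph{always} give a special birational map: the resolution $0\to\mathcal{E}(-3)\oplus\mathcal{E}'(-3)\oplus\ko_{Q^4}(-4)\to\ko_{Q^4}(-3)^{\oplus 6}\to\mathcal{I}_{S/Q^4}\to 0$ yields $h^0(\mathcal{I}_{S/Q^4}(3))=6$ with the six cubics cutting $S$ scheme-theoretically, and $(3H-E)^4=2$ shows the image is a quadric $Z\subseteq\PP^5$ of rank at least $3$ --- no hypothesis is needed for base-point-freeness or non-degeneracy. What the elliptic-curve condition controls is \emph{smoothness of $Z$}. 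The paper proves $\tau$ is the divisorial contraction of the extremal ray spanned by strict transforms of trisecant lines (which exist by Bauer's classification and meet $S$ in length exactly $3$), with exceptional divisor the strict transform of $\mathrm{Trisec}(S)$ mapping onto a surface $T$; by Ando's theorem $Z$ and $T$ are smooth unless some fiber is two-dimensional, and by Andreatta--Wi\'sniewski such a fiber is a plane or a quadric, nodes of $Z$ arising only from plane fibers. The crux is that a plane fiber maps to a plane meeting $S$ in a cubic curve $\mathcal{C}$ with $(H_S+K_S)\mathcal{C}=5$ (an excess-intersection computation with Segre classes), whose pushforward to $S_0$ is a nef, primitive, isotropic class of degree $5$, hence an elliptic curve $\mathfrak{E}$ with $\langle H_{S_0},\mathfrak{E}\rangle$ saturated and $p,p'$ on a fiber --- precisely the excluded configuration; quadric fibers are instead excluded by $rkPic(S_0)=1$, which is why smoothness of $T$ needs that hypothesis. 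None of this appears in your proposal. Finally, your plan to compute the Mukai vector from a universal family is both harder than and different from what is needed: derived equivalence is quoted from the K3-33 construction, Oguiso's classification says a degree-$12$, Picard-rank-$1$ K3 has exactly the two Fourier--Mukai partners $S_0$ and $M(2,H_{S_0},3)$, so everything reduces to $S_0\ncong T_0$ --- and that requires the specific computation that the Hodge isometry of $H^4(Bl_SQ^4,\ZZ)$ acts on the $\ZZ/12\ZZ$ discriminant groups by multiplication by $5$ (via $H'=3H-E$ and $E^2=-5H^2+d+d'+4H_S$), which is incompatible with the $\pm\mathrm{Id}$ that any isomorphism induces on transcendental lattices. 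Your ``count showing the partner is distinct'' does not exist as such: the count yields two candidates, and distinguishing them is exactly this discriminant argument.
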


There are many points of comparison between this special self-birational transformation of $Q^4$ and a Cremona transformation of $\PP^4$ studied by B. Hassett and K. W. Lai in \cite{hassett.lai}. In their paper they classified Cremona transformations of $\PP^4$ whose base locus scheme is given by a surface with only transverse double points as singularities and they showed that there is just one such example. We can see the special cubo-cubic transformation of $Q^4$ as the smooth version of this Cremona transformation. 

The paper is organized as follows. In section \ref{section smooth surfaces in Q4}, we review basic results from the theory of congruences of lines in $\PP^3$. We also include results from \cite{gross.degree10}, which later will allow us to identify the base locus scheme $S$ of a special map $Q^4\dashrightarrow Q^4$ as a surface of type $^{II} Z_{E}^{10}$. In section \ref{section K3-33}, we present the construction of the family of Fano fourfolds of K3 type known as K3-33 given in \cite{Fano4foldsK3type}, and we show that the minimal models of the degree $10$ surfaces in $Q^4$ that are blown up are non-isomorphic K3 surfaces if their Picard rank is $1$. We follow the lattice-theoretic approach of B. Hassett and K. W. Lai from \cite{hassett.lai}. In section \ref{section special bir trans}, we quickly recall some of the definitions and properties of special rational maps, and then we focus on the proofs of Theorem \ref{classificationSpecialtransQ3} and Theorem \ref{main thm}. More precisely, we assume first that special self-birational maps of $Q^3$ and $Q^4$ exist and derive arithmetic conditions that can only be realized by the maps described in the theorems above. Then we make the classification effective by showing how to construct them in the most general setting.

\textit{Acknowledgements.} I would like to thank Francesco Russo and Giovanni Staglian\`o for many insightful comments and for asking me about the classification in dimension three. I would also like to thank Laurent Manivel for reading the preliminary version of this paper and asking me some questions that are now part of the present work. The content of this paper is part of my Ph.D. thesis.

\section{Smooth surfaces in \texorpdfstring{$Q^4$}{TEXT}}\label{section smooth surfaces in Q4}

\subsection{Invariants of a congruence.}

We review some key results from the theory of congruences of lines in $\PP^3$, which classically refers to the systematic study of smooth surfaces in $Q^4 \cong G(2,4)=\mathbb{G}(1,\PP ^3)$. For a good introduction to the subject, see \cite{arrondosols92}.

Let $S\subseteq \PP^5$ be a smooth surface of degree $\mathfrak{d} $ contained in $Q^4$. We also say that $S$ is a congruence of lines of degree $\mathfrak{d}$. In the Chow ring of $Q^4 $ we can write $S=\mathfrak{a}P_1 + \mathfrak{b}P_2 \in CH^2(Q^4)=\ZZ P_1 \oplus\ZZ P_2$, where $P_1$ and $P_2$ are the classes of planes from each of the rulings of $Q^4$ and $\mathfrak{a}$ and $\mathfrak{b}$ are integers. The pair $(\mathfrak{a},\mathfrak{b})$ is called the bidegree of $S$. Under the isomorphism $Q^4\cong \mathbb{G}(1,\PP ^3)$, the integer $\mathfrak{a}$ corresponds to the number of lines of the congruence passing through a fixed general point in $\PP^3$, while the integer $\mathfrak{b}$ corresponds to the number of lines of the congruence contained in a fixed general plane in $\PP^3$. Clearly, $\mathfrak{d}=\mathfrak{a}+\mathfrak{b}$ and $\mathfrak{a},\mathfrak{b}\geq 0$.

We denote by $H$ the hyperplane class of $Q^4\subseteq \PP^5$, by $H_S$ its restriction to $S$, by $K_S$ the canonical divisor of $S$, and by $\pi$ the sectional genus of $S$.

A complex is an effective Cartier divisor in $Q^4$. If $0\neq s\in H^0(Q^4, \ko(k))$ and $k\geq 1$, then the zero locus $V(s)\subseteq Q^4$ of the section $s$ is a complex whose degree is defined to be $k$. Clearly, all complexes are obtained in this way. If $k=1$, $2$ or $3$, we say that $V(s)$ is a linear, a quadratic, or a cubic complex, respectively. 

We collect a few formulas computing important invariants of the congruence $S$. 

\begin{pr}\label{total chern class of normal bundle of S}
    The total Chern class of $N_{S/Q^4}$ is given by 
    \[c(N_{S/Q^4})=1+ (K_S +4H_S) t + (7\mathfrak{d} +4H_S K_S -c_2(S) + K_S ^2) t^2.\]
\end{pr}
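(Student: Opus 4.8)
The plan is to exploit the short exact sequence of the normal bundle
\[0 \to T_S \to T_{Q^4}|_S \to N_{S/Q^4} \to 0,\]
together with the Whitney product formula, which yields $c(N_{S/Q^4}) = c(T_{Q^4}|_S)/c(T_S)$ in the Chow ring of $S$. Since $S$ is a surface, all total Chern classes are polynomials in $t$ of degree at most $2$, so the computation reduces to matching coefficients up to order $t^2$.

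First I would compute $c(T_{Q^4})$. Writing $H$ for the hyperplane class and using the sequence $0 \to T_{Q^4} \to T_{\PP^5}|_{Q^4} \to \ko_{Q^4}(2) \to 0$ coming from the fact that $Q^4$ is a quadric hypersurface, so that $N_{Q^4/\PP^5} = \ko_{Q^4}(2)$, combined with the Euler sequence giving $c(T_{\PP^5}) = (1+H)^6$, the Whitney formula produces
\[c(T_{Q^4}) = \frac{(1+H)^6}{1+2H} = 1 + 4H + 7H^2 + \cdots.\]
Truncating at degree $2$, which is all that survives after restriction to a surface, and restricting to $S$, I record $c_1 = 4H_S$ and $c_2 = 7H_S^2$. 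The only input beyond formal manipulation is the identity $H_S^2 = \mathfrak{d}$, namely that the self-intersection of the hyperplane class on $S$ equals its degree; thus $c(T_{Q^4}|_S) = 1 + 4H_S\, t + 7\mathfrak{d}\, t^2$.

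Next I would record $c(T_S) = 1 - K_S\, t + c_2(S)\, t^2$, using $c_1(T_S) = -K_S$. Dividing the two total Chern classes amounts to solving, order by order in $t$, the relation
\[(1 - K_S\, t + c_2(S)\, t^2)\, c(N_{S/Q^4}) = 1 + 4H_S\, t + 7\mathfrak{d}\, t^2.\]
The degree-$1$ coefficient gives $c_1(N_{S/Q^4}) = K_S + 4H_S$, and substituting this into the degree-$2$ coefficient yields $c_2(N_{S/Q^4}) = 7\mathfrak{d} + K_S(K_S + 4H_S) - c_2(S)$, which expands to the claimed expression.

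There is no genuine obstacle here: the result is a formal consequence of two exact sequences and the Whitney formula, and the whole argument is bookkeeping of Chern classes up to degree $2$. The only points requiring a moment of care are getting the expansion of $(1+H)^6/(1+2H)$ correct to second order and translating the intersection-theoretic quantity $H_S^2$ into the numerical degree $\mathfrak{d}$; everything else follows by matching coefficients.
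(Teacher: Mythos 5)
Your proof is correct and is, in substance, the same argument as the paper's: the paper simply cites Arrondo--Sols, where the analogous computation is carried out via the normal bundle sequence and the Whitney formula, the only cosmetic difference being that there $c(T_{Q^4})$ is obtained from the Grassmannian presentation $T_{G(2,4)}=\mathcal{U}_{G(2,4)}^{\ast}\otimes \mathcal{Q}_{G(2,4)}$ rather than from your Euler-sequence-plus-hypersurface computation $(1+H)^6/(1+2H)$, both giving $1+4H+7H^2$ in the relevant degrees. Your expansion, the identification $H_S^2=\mathfrak{d}$, and the coefficient matching are all accurate, so the proposal is a complete self-contained proof.
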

\begin{proof}
    See the proof of \cite[Proposition~2.1]{arrondosols92}.
\end{proof}

\begin{cor}\label{euler characteristic of normal bundle of S}
    The Euler characteristic of $N_{S/Q^4}$ is given by
    \[\chi(N_{S/Q^4})= 6\mathfrak{d} -\mathfrak{a}^2-\mathfrak{b}^2 +2(2\pi-2)+2\chi (\ko_S).\]
\end{cor}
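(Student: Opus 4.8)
The plan is to obtain $\chi(N_{S/Q^4})$ from Hirzebruch--Riemann--Roch and then rewrite the output, which is naturally phrased in terms of $K_S^2$, $H_S K_S$, $c_2(S)$ and $\mathfrak{d}$, using the invariants $\mathfrak{a}$, $\mathfrak{b}$, $\pi$ and $\chi(\ko_S)$ that appear in the statement. Writing $N:=N_{S/Q^4}$, a rank-$2$ bundle on the smooth surface $S$, Riemann--Roch on a surface gives
\[\chi(N)=2\chi(\ko_S)+\tfrac{1}{2}\,c_1(N)\bigl(c_1(N)-K_S\bigr)-c_2(N),\]
where I have already absorbed the Todd contribution using Noether's formula $\chi(\ko_S)=\tfrac{1}{12}(K_S^2+c_2(S))$.

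Next I would feed in $c_1(N)=K_S+4H_S$ from Proposition~\ref{total chern class of normal bundle of S}. Then $c_1(N)-K_S=4H_S$, so the middle term collapses to $2H_S(K_S+4H_S)=2H_S K_S+8\mathfrak{d}$ (using $H_S^2=\mathfrak{d}$), leaving
\[\chi(N)=2\chi(\ko_S)+8\mathfrak{d}+2H_S K_S-c_2(N).\]
It then remains only to re-express the two terms $2H_S K_S$ and $c_2(N)$.

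For the first, I would use the definition of the sectional genus, $2\pi-2=H_S(H_S+K_S)=\mathfrak{d}+H_S K_S$, which gives $2H_S K_S=2(2\pi-2)-2\mathfrak{d}$. For the second, I would invoke the self-intersection formula for the smooth codimension-$2$ subvariety $S\subseteq Q^4$: the degree of $c_2(N)$ equals the self-intersection $[S]^2$ computed in the Chow ring of $Q^4\cong \mathbb{G}(1,\PP^3)$. Since $[S]=\mathfrak{a}P_1+\mathfrak{b}P_2$ and the classes of the two rulings satisfy $P_1^2=P_2^2=1$ and $P_1\cdot P_2=0$, this gives $c_2(N)=\mathfrak{a}^2+\mathfrak{b}^2$. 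Substituting both identities collapses the expression to the asserted formula.

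The calculation is essentially bookkeeping; the one step that deserves genuine care is the last one, where the bidegree enters. The naive Riemann--Roch output knows only about $K_S^2$ and $c_2(S)$, and the integers $\mathfrak{a}$ and $\mathfrak{b}$ surface only after identifying $c_2(N)$ with $[S]^2$ and recording the intersection pairing of the two rulings of $\mathbb{G}(1,\PP^3)$. I expect this intersection-theoretic identification, rather than any analytic subtlety, to be the main thing to get right.
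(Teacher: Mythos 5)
Your proof is correct and follows essentially the same route as the paper: the Riemann--Roch identity $\chi(\mathbb{V})=rk(\mathbb{V})\chi(\ko_S)+\frac{c_1(\mathbb{V})^2-c_1(\mathbb{V})K_S}{2}-c_2(\mathbb{V})$ applied to $N_{S/Q^4}$, combined with the Chern class data of Proposition \ref{total chern class of normal bundle of S} and the adjunction relation $2\pi-2=\mathfrak{d}+H_SK_S$. The only cosmetic difference is that you substitute $c_2(N_{S/Q^4})=[S]^2=\mathfrak{a}^2+\mathfrak{b}^2$ directly via the self-intersection formula, a step the paper leaves implicit in this proof but spells out verbatim in the proof of Corollary \ref{square of K_S}, so the two computations coincide.
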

\begin{proof}
    Let $\mathbb{V}$ be a vector bundle over $S$. Then a Grothendieck-Riemann-Roch computation gives the following formula 
    \[ \chi(\mathbb{V})=rk(\mathbb{V})\chi(\ko_S) +\frac{c_1(\mathbb{V})^2 -c_1(\mathbb{V})K_S}{2}-c_2(\mathbb{V}).\]
    The claim follows by taking $\mathbb{V}=N_{S/Q^4}$ and using Proposition \ref{total chern class of normal bundle of S}.
\end{proof}
    
\begin{cor}\label{square of K_S} The self-intersection number of $K_S$ is given by 
\[K_S ^2 = c_2(S)-4H_S K_S -7\mathfrak{d} + \mathfrak{d} ^2 -2\mathfrak{a}\mathfrak{b}.\]
\end{cor}
\begin{proof}
    The second Chern class of $N_{S/Q^4}$ is identified with the intersection product $S^2$. Since $S=\mathfrak{a}P_1+\mathfrak{b}P_2$ in $CH^2(Q^4)$, we have $c_2(N_{S/Q^4})=S^2=(\mathfrak{a}P_1+\mathfrak{b}P_2)^2=\mathfrak{a}^2 + \mathfrak{b}^2=\mathfrak{d} ^2 -2\mathfrak{a}\mathfrak{b}$. The claim follows by comparing this equality with the equality $c_2(N_{S/Q^4})=7\mathfrak{d} +4H_S K_S -c_2(S) + K_S ^2$ according to Proposition \ref{total chern class of normal bundle of S}.
\end{proof}

\subsection{Sectional genus of a congruence.}

Define the maximal sectional genus $\pi_ 1(\mathfrak{d})$ of the surface $S$ as the integer
\begin{equation*}
  \pi_ 1(\mathfrak{d}) : =
    \begin{cases}
      \frac{\mathfrak{d}^2 -4\mathfrak{d} +8}{8} & \text{if }\mathfrak{d}=0 \text{ mod } 4 ; \\
      \frac{\mathfrak{d}^2 -4\mathfrak{d} +3}{8} & \text{if }\mathfrak{d}=1,3 \text{ mod } 4 ; \\
      \frac{\mathfrak{d}^2 -4\mathfrak{d} +4}{8} & \text{if }\mathfrak{d}=2 \text{ mod } 4.
    \end{cases}       
\end{equation*}

The next theorem explains the choice of this name. 

\begin{thm}\label{maximalsectionalgenus} If $S$ is non-degenerate and $\mathfrak{d}\geq 9$, then $\pi \leq \pi_ 1(\mathfrak{d})$. 
\end{thm}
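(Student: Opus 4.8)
The plan is to run a Castelnuovo-type argument on iterated hyperplane sections, exploiting at each stage that the section continues to lie on a quadric. Let $C = S \cap H \subseteq Q^3 \subseteq \PP^4$ be a general hyperplane section of $S$, where $Q^3 = Q^4 \cap H$; for general $H$ it is a smooth, irreducible, non-degenerate curve of degree $\mathfrak{d}$ and (arithmetic $=$ geometric) genus $\pi$, so that bounding $\pi$ is exactly bounding the sectional genus. Slicing once more yields a set $\Gamma = C \cap H'$ of $\mathfrak{d}$ points lying on the smooth quadric surface $Q^2 = Q^3 \cap H' \cong \PP^1 \times \PP^1 \subseteq \PP^3$, and by the uniform position principle $\Gamma$ is in uniform position. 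Writing $H_\Gamma(t)$ for the number of conditions $\Gamma$ imposes on forms of degree $t$, the cohomology of $0 \to \ko_C(t-1) \to \ko_C(t) \to \ko_\Gamma \to 0$ together with Riemann--Roch on $C$ gives the Castelnuovo inequality $\pi \leq \sum_{t \geq 1}(\mathfrak{d} - H_\Gamma(t))$ (the point being that $h^0(\ko_C(t)) - h^0(\ko_C(t-1)) \geq H_\Gamma(t)$, since forms on $\PP^4$ restrict onto all forms on $H' = \PP^3$). Everything therefore reduces to a sharp lower bound on $H_\Gamma$.

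The heart of the matter is the estimate that a non-degenerate set of points in uniform position on a smooth quadric surface imposes at least $\min(\mathfrak{d}, 4t)$ conditions on forms of degree $t$. The value $4t$ is the Hilbert function of an elliptic quartic, i.e. a curve of bidegree $(2,2)$ on $Q^2$, which should be the extremal configuration. I would prove this through the usual dichotomy for points in uniform position: either $\Gamma$ imposes at least $4t$ conditions, or $\Gamma$ lies on a curve $D \subseteq Q^2$ of small bidegree. A short computation with the restriction $\ko(t,t)|_D$ shows that the only bidegree that both produces fewer than $4t$ conditions and keeps $\Gamma$ non-degenerate in $\PP^3$ is $(1,2)$, i.e. $D$ a twisted cubic, where $H_\Gamma(t) = 3t+1$.

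It remains to exclude the twisted cubic, and here the ambient quadric threefold enters decisively. If the general $\Gamma$ lay on a twisted cubic, then $C$ would lie on a non-degenerate degree-$3$ surface $Y \subseteq \PP^4$ (a cubic scroll or a cone over a twisted cubic); since $C \subseteq Q^3 \cap Y$ and a proper intersection $Q^3 \cap Y$ would have degree $6 < \mathfrak{d}$ (using $\mathfrak{d} \geq 9$), this forces $Y \subseteq Q^3$. But no smooth quadric threefold contains such a surface: realising the scroll as the locus of $2 \times 2$ minors of a $2 \times 3$ matrix of linear forms, a general quadric through it is the determinant of that matrix bordered by a constant row, and a direct calculation shows that the discriminant of this quadric vanishes identically (the cone case reduces to quadrics through a twisted cubic in $\PP^3$, which are likewise singular). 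Hence the twisted cubic configuration cannot occur and $H_\Gamma(t) \geq \min(\mathfrak{d}, 4t)$.

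Summing $\mathfrak{d} - H_\Gamma(t) \leq (\mathfrak{d} - 4t)_+$ already gives $\pi \leq \sum_{t \geq 1}(\mathfrak{d} - 4t)_+$, which equals $\pi_1(\mathfrak{d})$ when $\mathfrak{d} \not\equiv 0 \pmod{4}$. The case $\mathfrak{d} \equiv 0 \pmod{4}$ carries an extra $+1$, and I expect this to be the genuinely delicate point. It originates at the critical twist $t = \mathfrak{d}/4$: on an elliptic quartic $D$ the bundle $\ko(t,t)|_D$ has degree exactly $\mathfrak{d}$, so when the general section $\Gamma$ is linearly equivalent to it, $\Gamma$ fails to impose independent conditions by exactly one, giving only $H_\Gamma(\mathfrak{d}/4) \geq \mathfrak{d} - 1$. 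Carrying this single defect through the summation produces precisely the correction in $\pi_1(\mathfrak{d})$ for $\mathfrak{d} \equiv 0 \pmod{4}$ and explains the case distinction. The main obstacle is thus twofold: proving that the elliptic quartic is the extremal configuration (the uniform-position estimate on the quadric surface, where one must rule out every competing bidegree), and carrying out the boundary analysis at the critical twist carefully enough to extract the correction term.
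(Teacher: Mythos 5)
The paper does not actually prove this statement: it is imported wholesale from Gross \cite[Theorem~3.6]{gross.distribution}, so there is no internal argument to compare against. Your Castelnuovo-type reconstruction --- slicing twice, placing the $\mathfrak{d}$ points $\Gamma$ in uniform position on $Q^2\cong\PP^1\times\PP^1$, bounding $H_\Gamma(t)$ below by the growth $4t$ of a bidegree-$(2,2)$ curve, excluding the bidegree-$(1,2)$ alternative, and locating the $+1$ correction for $\mathfrak{d}\equiv 0\pmod 4$ at the single defective twist $t=\mathfrak{d}/4$ --- is the standard route to bounds of this shape and is consistent with the method of the cited source. Your numerology is right: summing $(\mathfrak{d}-4t)_+$ reproduces $\pi_1(\mathfrak{d})$ exactly in the residue classes $\mathfrak{d}\equiv 1,2,3 \pmod 4$; for $\mathfrak{d}=4k$ the extremal curves are complete intersections of $Q^3$ with a quadric and a degree-$k$ hypersurface, of genus $2k^2-2k+1=\pi_1(4k)$, and their general hyperplane sections lie on a $(2,2)$ elliptic quartic $D$ with $\ko_D(k,k)\cong\ko_D(\Gamma)$ --- precisely your critical-twist defect, which occurs (with defect exactly one) if and only if $\mathfrak{d}\equiv 0\pmod 4$ and that linear equivalence holds.

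That said, the two load-bearing lemmas remain assertions, and one of them is not quite strong enough as stated. First, the dichotomy ``at least $4t$ conditions or $\Gamma$ lies on a low-bidegree curve'' would, on its own, allow a defect of one to leak into the classes $\mathfrak{d}\not\equiv 0\pmod 4$ and spoil the bound there; you need that when $\Gamma$ lies on no integral curve of bidegree $(a,b)$ with $a+b\le 4$, the strict estimate $H_\Gamma(t)\ge\min(\mathfrak{d},5t-1)$ holds (a bidegree-$(1,3)$ curve already gives $\min(\mathfrak{d},4t+1)$ and is harmless), and you need the curve $D$ to be a single integral curve independent of $t$. This is exactly where the hypothesis $\mathfrak{d}\ge 9$ enters: two distinct integral curves of bidegree summing to at most $4$ on $Q^2$ meet in at most $8$ points, so for $\mathfrak{d}\ge 9$ the minimal curve through $\Gamma$ is unique, while uniform position and non-degeneracy dispose of the reducible and non-reduced classes (your appeal to $\mathfrak{d}\ge 9$ only in the Bezout count $6<\mathfrak{d}$ misses this). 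Second, the passage from ``the general $\Gamma$ lies on a twisted cubic'' to ``$C$ lies on a degree-$3$ surface $Y\subseteq\PP^4$'' is a genuine lifting lemma, not a formality: one must show the cubics $D_{H'}$ form an irreducible family (uniqueness plus monodromy) sweeping out a two-dimensional $Y$ of degree $3$ with $Y\cap H'=D_{H'}$. Once there, your exclusion is correct, though the bordered-determinant calculation can be replaced by one line: $Pic(Q^3)=\ZZ H$ by Lefschetz, so every surface contained in a smooth quadric threefold is a Cartier divisor of even degree, and no degree-$3$ surface fits; in particular every quadric through the cubic scroll is singular, as you claim.
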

\begin{proof}
    See \cite[Theorem~3.6]{gross.distribution}.
\end{proof} 

The following result gives a lower bound for the arithmetic genus $p_ a$ of $S$ in terms of its degree $\mathfrak{d}$, as well as the existence of certain complexes containing $S$ when the maximal sectional genus is attained. 

\begin{thm}\label{lowerboundofp.a} If $S$ is non-degenerate, $\mathfrak{d}\geq 9$, and $\pi = \pi_ 1(\mathfrak{d})$, then 
\begin{equation*}
  p_ a \geq
    \begin{cases}
      \frac{\mathfrak{d}^3 -12\mathfrak{d}^2 + 56\mathfrak{d} -96}{96} & \text{if }\mathfrak{d}=0 \text{ mod } 4;\\
      \frac{\mathfrak{d}^3 -12\mathfrak{d}^2 + 41\mathfrak{d} -30}{96} & \text{if }\mathfrak{d}=1 \text{ mod } 4;\\
      \frac{\mathfrak{d}^3 -12\mathfrak{d}^2 + 44\mathfrak{d} -48}{96} & \text{if }\mathfrak{d}=2 \text{ mod } 4;\\
      \frac{\mathfrak{d}^3 -12\mathfrak{d}^2 + 41\mathfrak{d} -42}{96} & \text{if }\mathfrak{d}=3 \text{ mod } 4
    \end{cases}       
\end{equation*}
and $S$ is contained in a quadratic complex. Furthermore, equality holds if and only if $S$ is also contained in an irreducible complex of degree $\lceil \frac{\mathfrak{d}}{4}\rceil$. 
\end{thm}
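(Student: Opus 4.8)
The plan is to run a Castelnuovo-type analysis on the iterated general hyperplane sections of $S$. First I would cut $S$ by a general hyperplane to obtain a nondegenerate curve $C = S\cap H$ of degree $\mathfrak{d}$ and genus $\pi$ inside the smooth quadric threefold $Q^3 = Q^4\cap H\subseteq \PP^4$; by hypothesis and Theorem \ref{maximalsectionalgenus} this $C$ has maximal genus $\pi = \pi_1(\mathfrak{d})$, so it is Castelnuovo-extremal. Cutting once more gives $\mathfrak{d}$ points $\Gamma = C\cap H'$ in uniform position on the smooth quadric surface $Q^2 = Q^3\cap H'\cong \PP^1\times\PP^1\subseteq \PP^3$. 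The Castelnuovo genus formula $\pi = \sum_{m\geq 1}\bigl(\mathfrak{d} - h_\Gamma(m)\bigr)$ shows that $\pi$ is maximal exactly when the Hilbert function $h_\Gamma$ of $\Gamma$ grows as slowly as the uniform position principle allows, and on $Q^2$ the slowest growth is realized by points lying on a curve of balanced bidegree $(2,2)$, i.e. on the intersection of $Q^2$ with a quadric of $\PP^3$. A direct estimate of the deficiency sum for $\mathfrak{d}$ points on such a degree-$4$ curve reproduces the leading term $\mathfrak{d}^2/8$ of $\pi_1(\mathfrak{d})$, and the rounding of the last partial term is precisely what produces the case distinction modulo $4$.

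Next I would upgrade the incidence ``$\Gamma$ lies on a $(2,2)$-curve'' to the assertion that $S$ is contained in a quadratic complex. This is a two-step lifting: the quadric of $\PP^3$ through $\Gamma$ lifts to a quadratic complex of $Q^3$ through $C$, which in turn lifts to a quadratic complex of $Q^4$ through $S$. Each lift is carried out through the hyperplane-section sequence, the relevant surjectivity such as $H^0(\mathcal{I}_{S}(2))\to H^0(\mathcal{I}_{C}(2))$ following from the vanishing of $H^1$ of the appropriate twisted ideal sheaf, which holds because $S$ (resp. $C$) is nondegenerate of large degree. This yields the containment of $S$ in a quadratic complex unconditionally under the hypotheses.

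To produce the lower bound on $p_a$ I would analyse the Hilbert function of $S$ under the extremal constraint. Telescoping the sequences $0\to \ko_S(m-1)\to \ko_S(m)\to \ko_C(m)\to 0$ gives the Hilbert polynomial $\chi(\ko_S(m)) = \chi(\ko_S) + \mathfrak{d}\binom{m+1}{2} + m(1-\pi)$, so that bounding $p_a(S) = \chi(\ko_S)-1$ from below amounts to bounding from below the failure of the Hilbert function to reach this polynomial, i.e. the intermediate cohomology of $S$. The extremal model dictated by the curve analysis is the one in which $\Gamma$ is a complete intersection on $Q^2$ of the $(2,2)$-curve with a balanced curve of type $(k,k)$, $k=\lceil \mathfrak{d}/4\rceil$, whose $4k$ intersection points accommodate the $\mathfrak{d}$ points of $\Gamma$ most economically; propagating this structure back up to $S$ and computing the arithmetic genus of the resulting complete-intersection model produces a cubic polynomial in $\mathfrak{d}$, with the four cases governed by the residue $4k - \mathfrak{d}\in\{0,1,2,3\}$. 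The identities of Corollary \ref{euler characteristic of normal bundle of S} and Corollary \ref{square of K_S}, together with Noether's formula $12\chi(\ko_S)=K_S^2+c_2(S)$, serve as the bookkeeping that rewrites this estimate purely in terms of $p_a$, $\mathfrak{d}$ and the bidegree $(\mathfrak{a},\mathfrak{b})$.

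Finally, equality in the bound forces equality in every deficiency inequality, which holds precisely when $\Gamma$ is genuinely the complete intersection above; the second hypersurface then lifts, as before, to a complex of degree $k=\lceil \mathfrak{d}/4\rceil$ through $S$, and irreducibility is what rules out the degenerate configurations (reducible or non-reduced curves of the same degree carrying $\Gamma$) that would break term-by-term sharpness. The hard part of the argument is exactly this equality analysis together with the surface-level summation: one must control the intermediate cohomology of $S$ uniformly in $m$ from the curve data --- which requires the full Hilbert function of the extremal $C$, not merely its genus --- and then show that term-by-term sharpness is equivalent to the complete-intersection structure, hence to containment in an irreducible degree-$k$ complex. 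The uniform position principle for $\Gamma$ and a careful residuation/liaison analysis inside the quadratic complex are the tools I expect to be indispensable here.
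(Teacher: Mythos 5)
The first thing to say is that the paper does not prove this statement: it is quoted verbatim from Gross, the proof being the citation \cite[Theorem~2.6]{gross.degree10}. So the only thing your attempt can be measured against is Gross's quadric analogue of Castelnuovo theory, and in spirit your outline does follow that strategy (iterated general hyperplane sections, uniform position of the points $\Gamma\subseteq Q^2\cong\PP^1\times\PP^1$, minimal growth of $h_\Gamma$ forcing a curve of bidegree $(2,2)$, lifting to complexes, and an equality analysis tied to a complete-intersection structure). The route is the right one.

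As a proof, however, the proposal has a genuine gap at its load-bearing step, the lifting. You justify the surjectivity of $H^0(\mathcal{I}_{S}(2))\to H^0(\mathcal{I}_{C}(2))$ (and, implicitly, of $H^0(\mathcal{I}_{C}(2))\to H^0(\mathcal{I}_{\Gamma}(2))$) by vanishing of $H^1$ of the twisted ideal sheaf ``because $S$ (resp.\ $C$) is nondegenerate of large degree''. That is false as stated: $h^1(\mathcal{I}(1))=0$ is linear normality, and nondegenerate curves and surfaces of arbitrarily large degree that fail it exist in abundance (isomorphic projections from higher-dimensional spans). The needed vanishings must instead be extracted from the extremality hypothesis $\pi=\pi_1(\mathfrak{d})$: in general one only has the inequality $\pi\leq\sum_{m\geq1}\bigl(\mathfrak{d}-h_\Gamma(m)\bigr)$, with equality under ACM-type conditions, and it is precisely the maximality of $\pi$ that pins down the Hilbert function and kills the obstructing $H^1$'s; your sketch uses the equality form of the genus formula without flagging this and then assumes its cohomological consequences. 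Beyond that, the two steps you yourself label as hard --- the surface-level summation bounding $\chi(\ko_S)$ from below by curve data, where the signs of the contributions of $h^1(\ko_S(m))$ and $h^2(\ko_S(m))$ must be controlled uniformly in $m$, and the proof that term-by-term sharpness is equivalent to the complete-intersection structure and hence to containment in an \emph{irreducible} complex of degree $\lceil\frac{\mathfrak{d}}{4}\rceil$ (irreducibility being an additional argument, not a bookkeeping consequence) --- are deferred rather than executed. Those deferred steps are exactly where the content of Gross's theorem lies, so the proposal is a correct plan of attack but not yet a proof.
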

\begin{proof}
    See \cite[Theorem~2.6]{gross.degree10}.
\end{proof} 

\subsection{Congruences of degree 10}

We include here the following classification theorem by M. Gross. We denote by ${\PP^2}^{(n)}$ the blow-up of $n$ points in $\PP^2$.

\begin{thm}\label{classif deg 10 surfaces thm}
Let $S\subseteq Q^4$ be a non-degenerate smooth surface of degree $10$. Then either the bidegree of $S$ is $(3,7)$, and it is a minimal Enriques surface, or it is of bidegree $(4,6)$, and it is one of the following types:
\begin{itemize}
    \item ${\PP^2} ^{(12)}$, embedded via $|7L-2E_1 -\cdots -2E_9 -E_{10} - E_{11} - E_{12} |$. ($Z_A ^{10}$)
    \item A K3 surface blown up in one point. ($Z_B ^{10}$)
\end{itemize}
Or it is of bidegree $(5,5)$, and it is one of the following types:
\begin{itemize}
    \item A ruled surface over an elliptic curve with fibers embedded with degree $2$, with $\pi=4,5$, or $6$. ($C_A ^{10}$, $C_B ^{10}$, and $C_C ^{10}$)
    \item ${ \PP^2 }^{(13)}$, embedded via $|7L-3E_1 -2 E_2-\cdots -2E_7 -E_8 -\cdots - E_{11} - E_{13} |$. ($Z_C ^{10}$)
    \item ${\PP^2} ^{(17)}$, embedded via $|6L-2E_1 -2E_2 -2E_3 -E_4 -\cdots  - E_{17} |$. ($Z_D ^{10}$)
    \item A K3 surface blown up in two points. ($Z_E ^{10}$)
    \item A minimal elliptic surface with $p_a = p_g =2$ and $\pi = 8$. ($Z_F ^{10}$)
\end{itemize}
\end{thm}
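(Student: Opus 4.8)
The plan is to run an adjunction-theoretic classification driven by the numerical invariants that a smooth degree-$10$ congruence is forced to carry. First I would assemble the arithmetic constraints. From $\mathfrak{d}=\mathfrak{a}+\mathfrak{b}=10$ and $2\pi-2=H_S^2+H_SK_S=\mathfrak{d}+H_SK_S$ one reads off $H_SK_S=2\pi-12$; substituting this and $c_2(N_{S/Q^4})=\mathfrak{a}^2+\mathfrak{b}^2$ into Corollary \ref{square of K_S}, and eliminating $c_2(S)$ by Noether's formula $12\chi(\ko_S)=K_S^2+c_2(S)$, collapses everything into the single relation $2K_S^2=12\chi(\ko_S)-8\pi-22+\mathfrak{a}^2+\mathfrak{b}^2$. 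Since $S$ is non-degenerate with $\mathfrak{d}\ge 9$, Theorem \ref{maximalsectionalgenus} gives $\pi\le\pi_1(10)=8$, and an automorphism of $Q^4$ interchanging the two rulings lets me assume $\mathfrak{a}\le\mathfrak{b}$. Together with $\mathfrak{a},\mathfrak{b}\ge 0$, the positivity of $c_2(S)=e(S)$, and the usual bound on the irregularity, this reduces the problem to a short explicit list of admissible tuples $(\mathfrak{a},\mathfrak{b},\pi,\chi(\ko_S),q,K_S^2)$. The low-order bidegrees $(0,10),(1,9),(2,8)$ are then discarded using the classical structure theory of congruences of order at most $2$, which are completely understood and none of which is a non-degenerate smooth surface of degree $10$; this leaves exactly $(3,7)$, $(4,6)$, and $(5,5)$.

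The heart of the argument is the adjunction analysis for each surviving bidegree. I would study the adjoint system $|K_S+H_S|$ and iterate the Sommese--Van de Ven reduction down to a minimal model, reading the Kodaira dimension off the stabilised adjoint and matching it against the Enriques--Kodaira classification and the admissible tuple. In bidegree $(3,7)$ the numerics force $\chi(\ko_S)=1$, $q=0$, $2K_S=0$, hence a minimal Enriques surface. In bidegree $(4,6)$ the two solutions are $\chi(\ko_S)=1$ with rational minimal model (type $Z_A^{10}$, $\pi=6$) and $\chi(\ko_S)=2$ with K3 minimal model blown up once (type $Z_B^{10}$, $\pi=7$). Bidegree $(5,5)$ is the richest: the pair $(\chi(\ko_S),\pi)$ separates the elliptic-ruled surfaces $C_A^{10},C_B^{10},C_C^{10}$ ($\chi=0$, $\pi=4,5,6$), the rational surfaces $Z_C^{10},Z_D^{10}$ ($\chi=1$, $\pi=6,7$), the twice-blown-up K3 surface $Z_E^{10}$ ($\chi=2$, $\pi=7$), and the minimal properly elliptic surface $Z_F^{10}$ ($\chi=3$, $\pi=8$). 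For this last, extremal case I would invoke Theorem \ref{lowerboundofp.a}: at $\pi=\pi_1(10)=8$ it yields $p_a\ge 2$ and forces $S$ into a quadratic complex, with equality forcing an irreducible cubic complex through $S$, which pins down the $Z_F^{10}$ structure.

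Finally, for each type I would recover the explicit model by locating the $(-1)$-curves contracted under the successive adjunction reductions and matching them with the exceptional divisors of an iterated blow-up of $\PP^2$ (or of a K3), thereby reproducing the stated linear systems $|7L-2E_1-\cdots|$ and the blow-up descriptions, and checking that the resulting embedding is smooth of the prescribed bidegree. I expect the main obstacle to lie in bidegree $(5,5)$: there $K_S+H_S$ fails to be ample, the Kodaira dimension ranges over $-\infty,0,1$, and separating the seven types while proving the list is exhaustive --- excluding spurious numerical tuples and controlling both the base loci of the adjoint systems and the precise configurations of exceptional curves --- is exactly the delicate case-by-case surface geometry that makes such congruence classifications genuinely hard.
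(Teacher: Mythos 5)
First, a point of order: the paper does not prove this statement at all --- its ``proof'' is the single citation \cite[Theorem~1]{gross.degree10}, so the theorem is imported as a black box from Gross's classification of degree-$10$ congruences. Your proposal is therefore not paralleling the paper but attempting to reconstruct Gross's own argument. To your credit, the reconstruction follows his actual method (bidegree bookkeeping, the sectional-genus bound $\pi\le\pi_1(10)=8$ via Theorem \ref{maximalsectionalgenus}, the quadratic-complex dichotomy of Theorem \ref{lowerboundofp.a} at the extremal genus, and adjunction theory in the style of Sommese--Van de Ven), and your arithmetic checks out: combining Corollary \ref{square of K_S} with Noether's formula and $H_SK_S=2\pi-12$ does yield $2K_S^2=12\chi(\ko_S)-8\pi-22+\mathfrak{a}^2+\mathfrak{b}^2$, and every $(\chi(\ko_S),\pi,K_S^2)$ you assign to the listed types satisfies this relation (e.g.\ $Z_F^{10}$: $\chi=3$, $K_S^2=0$ forces $\pi=8$, matching the statement).

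Nevertheless there are genuine gaps that prevent this from being a proof. (i) Your truncation device ``positivity of $c_2(S)=e(S)$'' is false in the generality you need: a minimal ruled surface over an elliptic curve has $e(S)=0$, and ruled surfaces over a curve of genus $g\ge 2$ have $e(S)=4(1-g)<0$, so $c_2>0$ cannot bound the irregularity or close your list of tuples --- and the types $C_A^{10}$--$C_C^{10}$ show elliptic ruled surfaces genuinely occur. Excluding higher-genus scrolls, or an elliptic ruled surface in bidegree $(4,6)$, requires geometric input (analysis of the adjunction map, the scroll classification), not numerics. (ii) Discarding orders $\mathfrak{a}\le 2$ via ``classical structure theory'' silently invokes a cluster of nontrivial classification theorems that would themselves need proof or citation. (iii) Most importantly, the heart of the theorem --- that each admissible tuple is realized \emph{only} by the stated model with the stated linear system, and that no further types hide in bidegree $(5,5)$ --- is exactly where you defer to ``delicate case-by-case surface geometry''; that case analysis, together with the existence constructions, constitutes essentially all of Gross's long paper, so what you have is a correct plan rather than a proof. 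Since the paper itself treats the result as a citation, the honest repair is to do likewise --- cite \cite{gross.degree10} and \cite{gross.distribution} for the steps you invoke --- or to actually carry out the adjunction analysis in each bidegree.
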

\begin{proof}
    See \cite[Theorem~1]{gross.degree10}.
\end{proof}

Our work is mainly concerned with surfaces $S$ of type $Z_E ^{10}$. These surfaces contain two skew $(-1)$-lines whose contraction defines a K3 surface of genus $7$. We can also describe their ideal sheaves $\mathcal{I}_{S/Q^4}$ via a resolution of known vector bundles, such as the so-called spinor bundles $\mathcal{E}$ and $\mathcal{E}'$ of $Q^4$, and the vector bundle $\dot{\mathcal{E}}$ (see \cite[\S 1.3]{arrondosols92} for its construction). Under the isomorphism $Q^4\cong G(2,4)$, the spinor bundles $\mathcal{E}$ and $\mathcal{E}'$ correspond to the tautological bundle $\mathcal{U}_{G(2,4)}$ and the dual of the tautological quotient bundle $\mathcal{Q}_{G(2,4)}$, respectively.

\begin{pr}\label{resolution of ideal sheaf of S types E and B}
Let $S\subseteq Q^4$ be a surface of type $Z_B ^{10}$. Then $S$ is not contained in a quadratic complex and has ideal sheaf resolution 
\[0\rightarrow \mathcal{E}(-3)\oplus \mathcal{E}(-3)\oplus \ko_{Q^4}(-4)\rightarrow \ko_{Q^4}(-3)^{\oplus 6} \rightarrow \mathcal{I}_{S/Q^4}\rightarrow 0.\]
Let $S\subseteq Q^4$ be a surface of type $Z_E ^{10}$. If $S$ is contained in a quadratic complex, then it has ideal sheaf resolution
\[0\rightarrow \ko_{Q^4}(-4)^{\oplus 2} \oplus \ko_{Q^4}(-3)\rightarrow \dot{\mathcal{E}}(-2)\oplus \ko_{Q^4}(-2)\oplus \ko_{Q^4}(-4) \rightarrow \mathcal{I}_{S/Q^4}\rightarrow 0.\]
If $S$ is not contained in a quadratic complex, then it has ideal sheaf resolution
\[0\rightarrow \mathcal{E}(-3)\oplus \mathcal{E}'(-3)\oplus \ko_{Q^4}(-4)\rightarrow \ko_{Q^4}(-3)^{\oplus 6} \rightarrow \mathcal{I}_{S/Q^4}\rightarrow 0.\]
Both cases occur.
\end{pr}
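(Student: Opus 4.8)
The plan is to construct, for each of the two surface types and each geometric situation (contained or not contained in a quadratic complex), an explicit resolution of the ideal sheaf $\mathcal{I}_{S/Q^4}$ by direct sums of spinor bundles, the vector bundle $\dot{\mathcal{E}}$, and line bundles $\ko_{Q^4}(-k)$, and then to verify it by checking that the alternating sum of the Chern characters agrees with $\ch(\mathcal{I}_{S/Q^4})$ and that the sequence is exact. The natural starting point is to compute the cohomology groups $H^i(Q^4,\mathcal{I}_{S/Q^4}(k))$ and the twisted cohomology of the candidate bundles. Since $S$ has degree $10$, one knows from Theorem \ref{lowerboundofp.a} (applied with $\mathfrak{d}=10$) together with the invariants of the type $Z_E^{10}$ and $Z_B^{10}$ surfaces recorded in Theorem \ref{classif deg 10 surfaces thm} that the number of cubic complexes through $S$ is exactly $6$ (giving the six copies of $\ko_{Q^4}(-3)$ in the rank-one-dominant resolutions), and whether $S$ lies on a quadratic complex is precisely the dichotomy distinguishing the two $Z_E^{10}$ cases. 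So the very first step is to read off $h^0(\mathcal{I}_{S/Q^4}(2))$ and $h^0(\mathcal{I}_{S/Q^4}(3))$ from these invariants.

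First I would compute the Hilbert polynomial and the relevant cohomology of $\mathcal{I}_{S/Q^4}(k)$ for small $k$ using the formulas of Corollaries \ref{euler characteristic of normal bundle of S} and \ref{square of K_S} together with Riemann--Roch on $S$ and on $Q^4$; the short exact sequence $0\to\mathcal{I}_{S/Q^4}(k)\to\ko_{Q^4}(k)\to\ko_S(kH_S)\to 0$ reduces everything to known line-bundle cohomology on $Q^4$ and to cohomology of multiples of $H_S$ on the (possibly non-minimal) K3 or Enriques surface $S$. For the type $Z_B^{10}$ surface I would establish that $h^0(\mathcal{I}_{S/Q^4}(2))=0$, i.e.\ $S$ lies on no quadratic complex, so that only cubics vanish on $S$, forcing the presentation $\ko_{Q^4}(-3)^{\oplus 6}\to\mathcal{I}_{S/Q^4}$ at the level of generators; the kernel is then identified by computing its Chern character and recognizing it as $\mathcal{E}(-3)\oplus\mathcal{E}(-3)\oplus\ko_{Q^4}(-4)$, using the known Chern classes of the spinor bundle $\mathcal{E}$ on $Q^4$. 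For the two $Z_E^{10}$ cases the analysis branches according to $h^0(\mathcal{I}_{S/Q^4}(2))$: when this is nonzero the single quadratic complex contributes a $\ko_{Q^4}(-2)$ summand and forces the appearance of $\dot{\mathcal{E}}(-2)$, whereas when it vanishes one gets the symmetric spinor resolution with both $\mathcal{E}(-3)$ and $\mathcal{E}'(-3)$, reflecting that $S$ now meets both rulings of $Q^4$ in the way encoded by bidegree $(5,5)$.

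Once the numerically correct two-term presentations are written down, I would prove exactness by a syzygy/mapping-cone argument: realize the middle map explicitly (or abstractly via the unique nonzero homomorphism in the relevant $\Hom$ group, whose dimension I would pin down by a cohomology computation), check that it is injective as a map of sheaves by verifying it is injective on a general fiber and that the cokernel has the correct support and rank, and then confirm that the cokernel is torsion-free with the Hilbert polynomial of $\mathcal{I}_{S/Q^4}$, hence equals it. To rule out spurious summands and to fix the twists I would lean on Bott-type vanishing for the spinor bundles and line bundles on $Q^4$, ensuring that no intermediate cohomology obstructs splitting the resolution into the stated shape. The statement that ``both cases occur'' for type $Z_E^{10}$ I would settle by exhibiting one explicit surface of each kind, most directly via the K3-33 construction recalled in the introduction for the quadratic-complex case and via a generic member of the family for the non-quadratic case, checking $h^0(\mathcal{I}_{S/Q^4}(2))$ in each.

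The main obstacle I expect is the exactness/injectivity verification for the $\dot{\mathcal{E}}$-resolution of the $Z_E^{10}$ surface lying on a quadratic complex: here the Chern-character bookkeeping only constrains the terms up to stable equivalence, so numerical agreement does not by itself guarantee that the displayed sequence is the correct minimal resolution rather than a differently-assembled complex with the same Euler characteristic. Pinning down that the middle term is exactly $\dot{\mathcal{E}}(-2)\oplus\ko_{Q^4}(-2)\oplus\ko_{Q^4}(-4)$ — and in particular that the map from the left term is injective with torsion-free cokernel — will require genuinely understanding the geometry of $\dot{\mathcal{E}}$ (as constructed in \cite[\S 1.3]{arrondosols92}) and its restriction to $S$, rather than pure numerology. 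I would treat this as the crux and handle it by computing $\Hom$ and $\Ext^1$ groups among the summands to show the extension data is rigid, thereby forcing the resolution into the claimed form.
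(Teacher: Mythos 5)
Your proposal should first be measured against what the paper actually does: the paper does not prove this proposition at all, but quotes it verbatim from Gross and cites \cite[Proposition~4.3]{gross.degree10} for the proof. So you are in effect attempting to reprove Gross's theorem from scratch, and the attempt has concrete gaps. Your very first step fails: Theorem \ref{lowerboundofp.a} does not apply here, since it requires $\pi=\pi_1(\mathfrak{d})$, and for $\mathfrak{d}=10$ one has $\pi_1(10)=8$ while surfaces of types $Z_B^{10}$ and $Z_E^{10}$ have $\pi=7$; moreover that theorem asserts containment in a quadratic complex, not any count of cubic complexes. More fundamentally, no Riemann--Roch or Hilbert-polynomial computation can ``read off'' $h^0(\mathcal{I}_{S/Q^4}(2))$ or $h^0(\mathcal{I}_{S/Q^4}(3))$ as you propose: types $Z_B^{10}$, $^{I}Z_E^{10}$ and $^{II}Z_E^{10}$ all share the Hilbert polynomial $P(t)=5t^2-t+2$ (Lemma \ref{only two types of surfaces}), so $\chi(\mathcal{I}_{S/Q^4}(2))=20-P(2)=0$ and $\chi(\mathcal{I}_{S/Q^4}(3))=50-P(3)=6$ in every case, and the quadratic-complex dichotomy is exactly the question of whether $h^0=h^1=0$ or both are positive in degree $2$. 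The very fact that ``both cases occur'' shows this question is invisible to Euler characteristics; in particular the asserted non-containment for $Z_B^{10}$ is genuine geometric content, and $h^0(\mathcal{I}_{S/Q^4}(3))=6$ requires the vanishing $h^1(\mathcal{I}_{S/Q^4}(3))=0$, which in Gross's treatment comes out of the resolution (via his structural and linkage arguments), not in as an input --- your plan is circular on this point.

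The second gap is the identification of the syzygy sheaf. Matching Chern characters determines the kernel of $\ko_{Q^4}(-3)^{\oplus 6}\rightarrow \mathcal{I}_{S/Q^4}$ only up to class in $K$-theory; you flag this honestly for the $\dot{\mathcal{E}}$ case, but it afflicts the two spinor resolutions equally. To upgrade numerics to an actual direct-sum decomposition $\mathcal{E}(-3)\oplus\mathcal{E}'(-3)\oplus\ko_{Q^4}(-4)$ you would need a splitting theorem --- e.g.\ Kn\"orrer's classification of ACM bundles on quadrics as sums of line bundles and twisted spinor bundles --- applied to the second syzygy, and hence the vanishing of \emph{all} intermediate cohomology of $\mathcal{I}_{S/Q^4}(k)$ for all twists $k$; ``Bott-type vanishing'' for the fixed bundles $\mathcal{E}$, $\mathcal{E}'$, $\ko_{Q^4}(k)$ gives nothing about the syzygies of $S$ itself, and establishing those vanishings is precisely the substance of Gross's proof that your outline leaves unaddressed. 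Finally, your plan for ``both cases occur'' is backwards: the K3-33 construction produces surfaces of type $^{II}Z_E^{10}$, i.e.\ those \emph{not} contained in a quadratic complex (see Proposition \ref{open subset parametrizing surfaces of type II} and the construction in Section \ref{section K3-33}), and a generic member of that family is again of type II, so neither of your two proposed examples exhibits the type-I case; that requires a separate construction, for instance via the bundle $\dot{\mathcal{E}}$ as in Gross's paper.
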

\begin{proof}
    See \cite[Proposition~4.3]{gross.degree10}.
\end{proof}  

In fact, only surfaces of type $Z_{E}^{10}$ not contained in a quadratic complex will be relevant to us. Thus, we have to make a clear distinction thereof.

\begin{defn}
Let $S\subseteq Q^4$ be a surface of type $Z_{E}^{10}$.
    We say that $S$ is of type $^I Z_{E}^{10}$ if $h^0(\mathcal{I}_{S/Q^4}(2))> 0$, and we say that $S$ is of type $^{II} Z_{E}^{10}$ if $h^0(\mathcal{I}_{S/Q^4}(2))=0$.
\end{defn} 

In other words, $S$ is of type $^I Z_{E}^{10}$ if and only if $S$ is contained in a quadratic complex, and $S$ is of type $^{II} Z_{E}^{10}$ if and only if $S$ is not contained in a quadratic complex.

Define the polynomial $P(t):=5t^2 - t +2 \in \QQ [t]$ and consider the Hilbert scheme ${Hilb}_{Q^4}^{P(t)}$ parametrizing closed subschemes of $Q^4$ with Hilbert polynomial $P(t)$. 

\begin{lem}\label{only two types of surfaces}
    Surfaces of type $Z_{B}^{10}$ and $Z_{E}^{10}$ are the only smooth surfaces in $Q^4$ with Hilbert polynomial $P(t)$.
\end{lem}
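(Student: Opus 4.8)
The plan is to read off the geometric invariants of $S$ from the coefficients of $P(t)$ and then match them against Gross's classification in Theorem \ref{classif deg 10 surfaces thm}. For a smooth surface $S\subseteq Q^4\subseteq \PP^5$ the Hilbert polynomial is computed by Riemann--Roch as
\[ P(t)=\chi(\ko_S(tH_S))=\chi(\ko_S)+\tfrac{1}{2}\,tH_S\cdot(tH_S-K_S)=\tfrac{H_S^2}{2}\,t^2-\tfrac{H_S K_S}{2}\,t+\chi(\ko_S). \]
Comparing with $P(t)=5t^2-t+2$ yields $\mathfrak{d}=H_S^2=10$, $H_S K_S=2$, and $\chi(\ko_S)=2$, and by adjunction $2\pi-2=(H_S+K_S)H_S=12$, so $\pi=7$. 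Thus any surface with Hilbert polynomial $P(t)$ is a smooth degree $10$ surface of sectional genus $7$ with $\chi(\ko_S)=2$.

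Before invoking Theorem \ref{classif deg 10 surfaces thm}, which is stated for non-degenerate surfaces, I must check that $S$ spans $\PP^5$; this is the step I expect to be the main obstacle. One computes $\chi(\ko_S(H_S))=6$, and $h^2(\ko_S(H_S))=h^0(K_S-H_S)=0$ since $(K_S-H_S)\cdot H_S=2-10<0$ with $H_S$ ample. To exclude a genuinely degenerate $S$, I would argue that such an $S$ lies in a quadric threefold $W=Q^4\cap H'$: if $W$ is the smooth $Q^3$, then $S$ is a divisor of class $5H|_W$ (as $\mathrm{Pic}(Q^3)=\ZZ H$ and $\deg S=2\cdot 5$), so adjunction gives $K_S=(-3H+5H)|_S=2H_S$ and $\pi=16$, contradicting $\pi=7$; the quadric-cone case is ruled out by an analogous computation. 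Hence $S$ is non-degenerate and Gross's theorem applies.

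It then remains to traverse the list in Theorem \ref{classif deg 10 surfaces thm} and retain exactly the types with $\chi(\ko_S)=2$. The bidegree $(3,7)$ Enriques surfaces and the rational types $Z_A^{10}$, $Z_C^{10}$, $Z_D^{10}$ (blow-ups of $\PP^2$) all have $\chi(\ko_S)=1$; the elliptic ruled surfaces $C_A^{10}, C_B^{10}, C_C^{10}$ have $\chi(\ko_S)=1-g=0$; and the minimal elliptic surface $Z_F^{10}$ has $p_a=2$, hence $\chi(\ko_S)=p_a+1=3$. Only $Z_B^{10}$ (a K3 blown up at one point) and $Z_E^{10}$ (a K3 blown up at two points) satisfy $\chi(\ko_S)=2$, since $\chi(\ko_S)$ is a birational invariant equal to $2$ for any K3 surface. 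This establishes that the two listed types are the only candidates.

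Finally, I would confirm the converse, namely that surfaces of types $Z_B^{10}$ and $Z_E^{10}$ genuinely realize $P(t)$, so that the statement is not vacuous. Both have $\chi(\ko_S)=2$, and it suffices to check $H_S K_S=2$ (equivalently $\pi=7$). Using Corollary \ref{square of K_S} in the form $4H_S K_S=c_2(S)-K_S^2-7\mathfrak{d}+\mathfrak{d}^2-2\mathfrak{a}\mathfrak{b}$, one substitutes $(c_2(S),K_S^2,\mathfrak{a}\mathfrak{b})=(25,-1,24)$ for $Z_B^{10}$ (bidegree $(4,6)$, a K3 blown up once) and $(26,-2,25)$ for $Z_E^{10}$ (bidegree $(5,5)$, a K3 blown up twice); both give $4H_S K_S=8$, hence $H_S K_S=2$ and $\pi=7$. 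Thus both types have Hilbert polynomial $P(t)$, completing the proof. Apart from the non-degeneracy verification, every step is a direct numerical check against the classification.
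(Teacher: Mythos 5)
Your proposal is correct, and its skeleton coincides with the paper's: both read $(\mathfrak{d},H_SK_S,\chi(\ko_S))=(10,2,2)$ and $\pi=7$ off the coefficients of $P(t)$, reduce to Gross's classification, and filter the list by numerical invariants. Where you genuinely diverge is the non-degeneracy step, which you correctly identify as the main obstacle. The paper handles it by citation: Popescu--Ranestad's classification shows no smooth non-degenerate degree-$10$ surface in $\PP^4$ has sectional genus $7$, and a degree-$10$ surface in $\PP^3$ is excluded because its general hyperplane section is a smooth plane curve (of genus $36$). You instead argue intrinsically in the hyperplane section $W=Q^4\cap\PP^4$: since $Pic(Q^3)=\ZZ H$ and $\deg S=10$ force $S\sim 5H$, adjunction gives $K_S=2H_S$, so $H_SK_S=20\neq 2$. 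This is more elementary and self-contained (no input beyond Gross), and it automatically covers the doubly degenerate case $S\subseteq\PP^3$, since such an $S$ still lies in some hyperplane section. The one place you are glib is the quadric-cone case, where ``analogous computation'' hides a small but genuine argument: for the rank-$4$ cone $W$ one has $\mathrm{Cl}(W)=\ZZ P$ with $H\sim 2P$, where $P$ is a plane through the vertex; the degree forces $S\sim 10P=5H$, which is Cartier, and a Cartier divisor through the vertex cannot be smooth there (the Zariski tangent space of $W$ at the vertex is $4$-dimensional, so any Cartier divisor through it has tangent space of dimension at least $3$), whence $S$ avoids the vertex and adjunction applies on the smooth locus exactly as before. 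With that filled in, your argument is complete. Two minor remarks: the computation $\chi(\ko_S(H_S))=6$ plays no role in establishing non-degeneracy (a lower bound on $h^0(\ko_S(H_S))$ says nothing about injectivity of the restriction map from $H^0(\ko_{\PP^5}(1))$) and should be deleted; and your filtering by $\chi(\ko_S)=2$ over the whole list differs slightly from the paper, which first narrows to $Z_B^{10}$, $Z_D^{10}$, $Z_E^{10}$ by sectional genus via Gross's Table~1 and only then uses $\chi(\ko_S)=2$ to exclude $Z_D^{10}$ --- both filtrations are valid, and your closing verification that $Z_B^{10}$ and $Z_E^{10}$ actually realize $P(t)$ (via Corollary \ref{square of K_S}) is a sound addition that the paper leaves implicit in the table.
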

\begin{proof}
    Indeed, a smooth surface $S$ with Hilbert polynomial $P(t)=5t^2 -t +2=\frac{{H_S^2}  }{2}t^2 -\frac{H_S K_S}{2} t +\chi(\ko _S)$ has degree ${H_S^2} = 5\times 2=10$. Also, $S$ must necessarily have sectional genus $7$, since $2\pi -2={H_S^2}  + H_S K_S =10+2=12$. By the classification of non-degenerate smooth surfaces of degree $10$ in $\PP^4$ by K. Ranestad and S. Popescu (see \cite[Theorem~0.16]{Ranestad.SurfacesDeg1O}), no non-degenerate smooth surface of degree $10$ in $\PP^4$ can have sectional genus $7$. Moreover, a smooth surface of degree $10$ contained in $\PP^3$ cannot have sectional genus $7$ either, for a general hyperplane section is a smooth plane curve. Hence, $S$ is non-degenerate. Therefore, $S$ must be of type $Z_{B}^{10}$, $Z_{D}^{10}$, or $Z_{E}^{10}$ by \cite[\S 4 Table~1]{gross.degree10}. However, we can verify that only surfaces $S$ of type $Z_{B}^{10}$ and $Z_{E}^{10}$ satisfy $\chi(\ko _S)=2$ using the same table.
\end{proof}

\begin{lem}\label{Hilb is smooth at S type EII and type B} We have the following:
\begin{enumerate}
    \item Let $S\subseteq Q^4$ be a surface of type $ Z_B^{10}$. Then ${Hilb}_{Q^4}^{P(t)}$ is smooth at $[S]$ and the Zariski tangent space $T_{[S]}{Hilb}_{Q^4}^{P(t)}$ has dimension $36$.
    \item Let $S\subseteq Q^4$ be a surface of type $^{II} Z_{E}^{10}$. Then ${Hilb}_{Q^4}^{P(t)}$ is smooth at $[S]$ and the Zariski tangent space $T_{[S]}{Hilb}_{Q^4}^{P(t)}$ has dimension $38$.
\end{enumerate}
\end{lem}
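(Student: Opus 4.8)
The strategy is purely deformation-theoretic. Since $S$ is a smooth surface in the smooth fourfold $Q^4$, it is a local complete intersection and its normal sheaf $N_{S/Q^4}$ is a rank-$2$ vector bundle. The Zariski tangent space to the Hilbert scheme at $[S]$ is then $H^0(S,N_{S/Q^4})$, while the obstructions lie in $H^1(S,N_{S/Q^4})$. Consequently, it suffices to establish the two vanishings $H^1(N_{S/Q^4})=0$ and $H^2(N_{S/Q^4})=0$: the first guarantees that ${Hilb}_{Q^4}^{P(t)}$ is smooth at $[S]$, and together they give $\dim T_{[S]}{Hilb}_{Q^4}^{P(t)}=h^0(N_{S/Q^4})=\chi(N_{S/Q^4})$.

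I would first pin down the numbers via Corollary \ref{euler characteristic of normal bundle of S}. Both surfaces have $\mathfrak{d}=10$, and reading off $P(t)=5t^2-t+2$ gives $\chi(\ko_S)=2$ and $2\pi-2=12$, i.e. $\pi=7$. A surface of type $Z_B^{10}$ has bidegree $(\mathfrak{a},\mathfrak{b})=(4,6)$, so $\chi(N_{S/Q^4})=60-52+24+4=36$; a surface of type $^{II}Z_E^{10}$ has bidegree $(5,5)$, so $\chi(N_{S/Q^4})=60-50+24+4=38$. Thus the asserted dimensions $36$ and $38$ will follow the moment the two cohomology groups above are shown to vanish.

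The core of the proof is therefore the vanishing of $H^1$ and $H^2$, and here I would feed in the ideal-sheaf resolution of Proposition \ref{resolution of ideal sheaf of S types E and B}. Writing it as $0\to \kf_1\to \kf_0\to \mathcal{I}_{S/Q^4}\to 0$ with $\kf_0=\ko_{Q^4}(-3)^{\oplus 6}$ and $\kf_1=\mathcal{E}(-3)\oplus\mathcal{E}'(-3)\oplus\ko_{Q^4}(-4)$ (type $^{II}Z_E^{10}$; for type $Z_B^{10}$ replace $\mathcal{E}'$ by $\mathcal{E}$), one tensors with $\ko_S$ and identifies the single surviving Tor-sheaf ($S$ being lci of codimension $2$) to obtain the four-term exact sequence
\[0\to \textstyle\bigwedge^2 N_{S/Q^4}^\vee\to \kf_1|_S\to \kf_0|_S\to N_{S/Q^4}^\vee\to 0.\]
Dualizing (all terms are locally free on $S$) and using $\mathcal{E}^\vee\cong\mathcal{E}(1)$, $\mathcal{E}'^\vee\cong\mathcal{E}'(1)$ yields
\[0\to N_{S/Q^4}\to \ko_S(3H_S)^{\oplus 6}\to \mathcal{E}(4)|_S\oplus\mathcal{E}'(4)|_S\oplus\ko_S(4H_S)\to \textstyle\bigwedge^2 N_{S/Q^4}\to 0,\]
where $\bigwedge^2 N_{S/Q^4}=\ko_S(K_S+4H_S)$ by adjunction. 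Breaking this into two short exact sequences and chasing cohomology reduces $H^1(N_{S/Q^4})$ and $H^2(N_{S/Q^4})$ to the groups $H^i(\ko_S(3H_S))$, $H^i(\ko_S(4H_S))$, $H^i(\ko_S(K_S+4H_S))$ together with the restricted twisted spinor groups $H^i(\mathcal{E}(4)|_S)$ and $H^i(\mathcal{E}'(4)|_S)$ (only $i=0,1$ actually intervene). The three line-bundle terms are routine: their $H^2$ vanish by Serre duality (each dual has strictly negative degree against the ample class $H_S$), and their $H^1$ can be killed via the structure sequence on $Q^4$ combined with the resolution.

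The genuine obstacle is the cohomology of the restricted spinor bundles $\mathcal{E}(4)|_S$ and $\mathcal{E}'(4)|_S$. I would compute these from the structure sequence $0\to\mathcal{I}_{S/Q^4}\otimes\mathcal{E}(4)\to\mathcal{E}(4)\to\mathcal{E}(4)|_S\to 0$ on $Q^4$: the term $H^i(\mathcal{E}(4))$ is read off from Bott's theorem on $Q^4\cong G(2,4)$, while $H^i(\mathcal{I}_{S/Q^4}\otimes\mathcal{E}(4))$ is obtained by tensoring the resolution with $\mathcal{E}(4)$ and invoking the classical decompositions of $\mathcal{E}\otimes\mathcal{E}$ and $\mathcal{E}\otimes\mathcal{E}'$ into bundles with known cohomology (using $\det\mathcal{E}=\ko_{Q^4}(-1)$ and the identification of $\mathcal{E}\otimes\mathcal{E}'$ with a twist of the tangent bundle of $Q^4$). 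The two cases run in perfect parallel, the only change being the replacement of one factor $\mathcal{E}$ by $\mathcal{E}'$; the resulting difference in $h^0$ is exactly the discrepancy already recorded by $\chi(N_{S/Q^4})$, namely $38-36=2$. Carrying out this bookkeeping of spinor-bundle cohomology—in particular verifying that all off-diagonal contributions to $H^1$ and $H^2$ cancel—is where the real work lies.
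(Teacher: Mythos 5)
Your numerical work is correct (the values $\chi(N_{S/Q^4})=36$ and $38$ do follow from Corollary \ref{euler characteristic of normal bundle of S} with bidegrees $(4,6)$ and $(5,5)$), and so is your derivation of the four-term sequence: the identification of the Tor sheaf with $\bigwedge^2 N^\vee_{S/Q^4}$ and the dualization are both legitimate. The gap is in the final step, the cohomology chase. Split your sequence into $0\to N_{S/Q^4}\to \ko_S(3H_S)^{\oplus 6}\to \mathcal{G}\to 0$ and $0\to \mathcal{G}\to \mathcal{E}(4)|_S\oplus\mathcal{E}'(4)|_S\oplus\ko_S(4H_S)\to \ko_S(K_S+4H_S)\to 0$. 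Even after you establish every vanishing that is actually available ($H^1$ and $H^2$ of the two middle bundles, and $H^{1}(\ko_S(K_S+4H_S))=H^2(\ko_S(K_S+4H_S))=0$ by Kodaira vanishing), the chase only yields
\[
H^1(N_{S/Q^4})\cong\mathrm{coker}\bigl(H^0(\ko_S(3H_S)^{\oplus 6})\to H^0(\mathcal{G})\bigr),\quad
H^2(N_{S/Q^4})\cong\mathrm{coker}\bigl(H^0(\mathcal{E}(4)|_S\oplus\mathcal{E}'(4)|_S\oplus\ko_S(4H_S))\to H^0(\ko_S(K_S+4H_S))\bigr).
\]
Since $h^0(\ko_S(K_S+4H_S))=\chi(\ko_S(K_S+4H_S))=86\neq 0$ by Riemann--Roch, what you must prove is the surjectivity of two specific maps of global sections, and no ``bookkeeping of spinor-bundle cohomology'' --- that is, no list of dimensions $h^i$ --- can establish it: the dimensions only recover $\chi(N_{S/Q^4})=\chi(\ko_S(3H_S)^{\oplus 6})-\chi(\mathcal{E}(4)|_S\oplus\mathcal{E}'(4)|_S\oplus\ko_S(4H_S))+\chi(\ko_S(K_S+4H_S))$, which you already knew, and leave $h^1$ and $h^2$ individually undetermined. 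Serre duality does not rescue this: dualizing the sequence converts the needed surjectivity into the Serre dual of the very same map. So the place where you say ``the real work lies'' is not bookkeeping at all but a map-theoretic statement that your proposal gives no means of attacking.

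The paper avoids this problem by never restricting to $S$. It invokes the identification $H^i(N_{S/Q^4})\cong Ext^{i+1}(\mathcal{I}_{S/Q^4},\mathcal{I}_{S/Q^4})$ from \cite[Lemma~2.3]{arrondosols92} and applies $Hom(-,\mathcal{I}_{S/Q^4})$ to the resolution of Proposition \ref{resolution of ideal sheaf of S types E and B}; then $H^1(N_{S/Q^4})$ and $H^2(N_{S/Q^4})$ are sandwiched between the groups $H^{1,2}(\mathcal{I}_{S/Q^4}(3))$, $H^{1,2}(\mathcal{I}_{S/Q^4}(4))$, $H^{1,2}(\mathcal{E}(4)\otimes\mathcal{I}_{S/Q^4})$, and $H^{1,2}(\mathcal{E}'(4)\otimes\mathcal{I}_{S/Q^4})$, all of which vanish outright (the first two from the resolution together with Gross's vanishing $H^{i}(\mathcal{I}_{S/Q^4}(l))=0$ for $i\geq 2$, $l\geq 1$; the spinor groups by tensoring the resolution with $\mathcal{E}(4)$ and using the known cohomology of $\mathcal{E}\otimes\mathcal{E}^{\ast}$, $\mathcal{E}\otimes\mathcal{E}'^{\ast}$, $\mathcal{E}$, and $\mathcal{E}(1)$). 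Because every sandwiching group is zero, no surjectivity of any map ever needs to be checked. Note, moreover, that your own plan for computing $H^i(\mathcal{E}(4)|_S)$ via the structure sequence already requires $H^i(\mathcal{E}(4)\otimes\mathcal{I}_{S/Q^4})$, which is exactly the paper's subsidiary computation --- so completing your route would contain the paper's entire argument and, on top of it, the two unresolved surjectivities. The repair is to replace your restricted-and-dualized sequence with the $Ext$ identification; the rest of your write-up can be kept as is.
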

\begin{proof}

    Let $S\subseteq Q^4$ be a surface of type $^{II}Z_{E}^{10}$. We can compute the cohomology groups $H^1(Q^4, N_{S/Q^4})$ and $H^2(Q^4 , N_{S/Q^4})$ as $Ext^2(\mathcal{I}_{S/ Q^4}, \mathcal{I}_{S/ Q^4})$ and $Ext^3(\mathcal{I}_{S/ Q^4} ,\mathcal{I}_{S/ Q^4})$, respectively, by \cite[Lemma~2.3]{arrondosols92}. We thus apply the functor $Hom(- , \mathcal{I}_{S/ Q^4} )$ to the resolution sequence of $\mathcal{I}_{S/ Q^4}$ given in Proposition \ref{resolution of ideal sheaf of S types E and B} to get a long exact sequence of right derived functors which will help us to compute them. Below we have an extract of the long exact sequences with the relevant spaces to consider. We have rewritten it in a cohomologial style using well-known properties of the functors $Ext^i(- , \mathcal{I}_{S/ Q^4} )$ and identifications $(\mathcal{E}(-3))^\ast = \mathcal{E}(4)$ and $(\mathcal{E'}(-3))^\ast =\mathcal{E'}(4)$.
\begin{align*}
    0&\rightarrow H^0(\mathcal{I}_{S/ Q^4}(3))^{\oplus 6} \rightarrow H^0((\mathcal{E}(4)\oplus \mathcal{E}'(4) )\otimes \mathcal{I}_{S/ Q^4})\oplus H^0(\mathcal{I}_{S/ Q^4}(4))\rightarrow H^0(N_{S/Q^4})\\
    &\rightarrow H^1(\mathcal{I}_{S/ Q^4}(3))^{\oplus 6} \rightarrow H^1((\mathcal{E}(4)\oplus \mathcal{E}'(4) )\otimes \mathcal{I}_{S/ Q^4})\oplus H^1(\mathcal{I}_{S/ Q^4}(4))\rightarrow H^1(N_{S/Q^4})\\
    &\rightarrow H^2(\mathcal{I}_{S/ Q^4}(3))^{\oplus 6} \rightarrow H^2((\mathcal{E}(4)\oplus \mathcal{E}'(4) )\otimes \mathcal{I}_{S/ Q^4})\oplus H^2(\mathcal{I}_{S/ Q^4}(4))\rightarrow H^2(N_{S/Q^4})\\
    &\rightarrow H^3(\mathcal{I}_{S/ Q^4}(3))^{\oplus 6} \rightarrow H^3((\mathcal{E}(4)\oplus \mathcal{E}'(4) )\otimes \mathcal{I}_{S/ Q^4})\oplus H^3(\mathcal{I}_{S/ Q^4}(4))\rightarrow 0
\end{align*}

    It is easy to see that $H^1(\mathcal{I}_{S/ Q^4}(3))=H^1(\mathcal{I}_{S/ Q^4}(4))=0$ using the resolution sequence of $\mathcal{I}_{S/ Q^4}$. Moreover, we have $H^i(\mathcal{I}_{S/ Q^4}(l))=0$ for all $l\geq 1$ and all $i\geq 2$ by \cite[Proposition~4.3]{gross.degree10}. Hence, $H^1(N_{S/Q^4})= H^1(\mathcal{E}(4)\otimes \mathcal{I}_{S/ Q^4}) \oplus H^1(\mathcal{E}'(4)\otimes \mathcal{I}_{S/ Q^4})$ and  $H^2(N_{S/Q^4})= H^2(\mathcal{E}(4)\otimes \mathcal{I}_{S/ Q^4}) \oplus H^2(\mathcal{E}'(4)\otimes \mathcal{I}_{S/ Q^4})$.

    In order to compute $H^1(\mathcal{E}(4)\otimes \mathcal{I}_{S/ Q^4})$ and  $H^2(\mathcal{E}(4)\otimes \mathcal{I}_{S/ Q^4})$ we tensor the resolution sequence of $\mathcal{I}_{S/ Q^4}$ with the vector bundle $\mathcal{E}$(4). The short sequence thus obtained remains exact and we can now consider its associated long exact sequence in cohomology
\begin{align*}
    0&\rightarrow H^0(\mathcal{E}\otimes\mathcal{E}^\ast) \oplus H^0(\mathcal{E}\otimes\mathcal{E'}^\ast) \oplus H^0(\mathcal{E})\rightarrow  H^0(\mathcal{E}(1))^{\oplus 6} \rightarrow H^0(\mathcal{E}(4)\otimes \mathcal{I}_{S/ Q^4}) \\
    &\rightarrow H^1(\mathcal{E}\otimes\mathcal{E}^\ast) \oplus H^1(\mathcal{E}\otimes\mathcal{E'}^\ast) \oplus H^1(\mathcal{E})\rightarrow  H^1(\mathcal{E}(1))^{\oplus 6} \rightarrow H^1(\mathcal{E}(4)\otimes \mathcal{I}_{S/ Q^4})\\
    &\rightarrow H^2(\mathcal{E}\otimes\mathcal{E}^\ast) \oplus H^2(\mathcal{E}\otimes\mathcal{E'}^\ast) \oplus H^2(\mathcal{E})\rightarrow  H^2(\mathcal{E}(1))^{\oplus 6} \rightarrow H^2(\mathcal{E}(4)\otimes \mathcal{I}_{S/ Q^4})\\
    &\rightarrow H^3(\mathcal{E}\otimes\mathcal{E}^\ast) \oplus H^3(\mathcal{E}\otimes\mathcal{E'}^\ast) \oplus H^3(\mathcal{E})\rightarrow  H^3(\mathcal{E}(1))^{\oplus 6} \rightarrow H^3(\mathcal{E}(4)\otimes \mathcal{I}_{S/ Q^4})\rightarrow \dots
\end{align*} 
    We have $H^i(\mathcal{E}\otimes\mathcal{E}^\ast) \oplus H^i(\mathcal{E}\otimes\mathcal{E'}^\ast) \oplus H^i(\mathcal{E})=H^i(\mathcal{E}(1))=0$ for $i=1$, $2$, and $3$ by \cite[\S 1.4]{arrondosols92}. Hence, $H^1(\mathcal{E}(4)\otimes \mathcal{I}_{S/ Q^4})=H^2(\mathcal{E}(4)\otimes \mathcal{I}_{S/ Q^4})=0$. Similarly, we get $H^1(\mathcal{E'}(4)\otimes \mathcal{I}_{S/ Q^4})=H^2(\mathcal{E'}(4)\otimes \mathcal{I}_{S/ Q^4})=0$. Consequently, $H^1(N_{S/Q^4})=H^2(N_{S/Q^4})=0$. In particular, ${Hilb}_{Q^4}^{P(t)}$ is smooth at $[S]$ and $dim(T_{[S]} {{Hilb}_{Q^4}^{P(t)}} )=dim (H^0(N_{S/Q^4}))=\chi(N_{S/Q^4})=38$ by Corollary \ref{euler characteristic of normal bundle of S}. Finally, the same arguments above can be applied to prove the claims about surfaces of type $ Z_{B}^{10}$ in the statement of the lemma.
\end{proof} 

\begin{pr}\label{open subset parametrizing surfaces of type II} There is a smooth and unirational open subset ${U}_{Q^4}\subseteq Hilb_{Q^4}^{P(t)}$ of dimension $38$ that parametrizes surfaces of type $^{II} Z_{E}^{10}$ in $Q^4$. 
\end{pr}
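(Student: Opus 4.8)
The plan is to take $U_{Q^4}$ to be the locus of $[S]\in Hilb_{Q^4}^{P(t)}$ for which $S$ is a smooth surface of type $^{II}Z_{E}^{10}$, and to verify in turn that this locus is open, smooth of dimension $38$, and unirational. For openness I would argue in three steps. The locus where the fiber is a smooth surface is open, since smoothness of the fibers of a flat family is an open condition. On this locus the bidegree $(\mathfrak{a},\mathfrak{b})$ is locally constant, because the cycle class $[S]=\mathfrak{a}P_1+\mathfrak{b}P_2$ in $CH^2(Q^4)\cong H^4(Q^4,\ZZ)$ is a deformation invariant; by Lemma \ref{only two types of surfaces} and Theorem \ref{classif deg 10 surfaces thm} the smooth members are exactly the surfaces of type $Z_{B}^{10}$ (bidegree $(4,6)$) and $Z_{E}^{10}$ (bidegree $(5,5)$), so the bidegree-$(5,5)$ locus $W$, which is precisely the smooth type-$Z_{E}^{10}$ locus, is open. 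Finally, inside $W$ the type-$^{II}$ condition $h^0(\mathcal{I}_{S/Q^4}(2))=0$ is open by the upper semicontinuity of cohomology. Hence $U_{Q^4}=\{[S]\in W: h^0(\mathcal{I}_{S/Q^4}(2))=0\}$ is open, and it is nonempty because such surfaces exist by Proposition \ref{resolution of ideal sheaf of S types E and B}.

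Smoothness and the dimension count are then immediate from Lemma \ref{Hilb is smooth at S type EII and type B}(\textit{b}): at every $[S]\in U_{Q^4}$ the Hilbert scheme is smooth with tangent space of dimension $38$, so $U_{Q^4}$ is smooth of pure dimension $38$.

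For unirationality I would exploit the ideal sheaf resolution of Proposition \ref{resolution of ideal sheaf of S types E and B}. Setting $F=\mathcal{E}(-3)\oplus\mathcal{E}'(-3)\oplus\ko_{Q^4}(-4)$ and $G=\ko_{Q^4}(-3)^{\oplus 6}$, the space of sheaf morphisms $\mathbb{H}=Hom(F,G)=H^0(Q^4,F^\vee\otimes G)$ is a finite-dimensional vector space (computed using $\mathcal{E}^\vee\cong\mathcal{E}(1)$ and $\mathcal{E}'^\vee\cong\mathcal{E}'(1)$), hence rational. Let $\mathbb{H}^\circ\subseteq\mathbb{H}$ be the subset of those $\phi$ that are injective with cokernel equal to the ideal sheaf of a surface $S_\phi$ of type $^{II}Z_{E}^{10}$; this subset is open, because injectivity of $\phi$ and the smoothness and type of $S_\phi$ are open conditions, and it is nonempty by Proposition \ref{resolution of ideal sheaf of S types E and B}. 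The universal cokernel over $\mathbb{H}^\circ$ is a flat family of ideal sheaves and thus induces a morphism $\mathbb{H}^\circ\to U_{Q^4}$, $\phi\mapsto[S_\phi]$. By Proposition \ref{resolution of ideal sheaf of S types E and B} every surface of type $^{II}Z_{E}^{10}$ admits a resolution of exactly this shape, so this morphism is surjective. A surjection from the irreducible rational variety $\mathbb{H}^\circ$ shows that $U_{Q^4}$ is irreducible and unirational.

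The main obstacle is controlling the resolution construction in families for the unirationality step: one must check that the conditions cutting out $\mathbb{H}^\circ$ are genuinely open and nonempty, and that the universal cokernel is flat over $\mathbb{H}^\circ$, so that $\phi\mapsto[S_\phi]$ is a well-defined surjective morphism onto $U_{Q^4}$. The remaining ingredients—openness of the smooth-surface locus, local constancy of the bidegree, semicontinuity of $h^0(\mathcal{I}_{S/Q^4}(2))$, and the smoothness and dimension via Lemma \ref{Hilb is smooth at S type EII and type B}—are routine.
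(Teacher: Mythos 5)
Your proposal is correct and follows the paper's proof in all essentials: openness via semicontinuity of $h^0(\mathcal{I}_{S/Q^4}(2))$ together with Lemma \ref{only two types of surfaces}, smoothness and the dimension count via Lemma \ref{Hilb is smooth at S type EII and type B}, and unirationality via the parameter space of morphisms $\mathcal{E}\oplus\mathcal{E}'\oplus\ko_{Q^4}(-1)\rightarrow\ko_{Q^4}^{\oplus 6}$ coming from the resolution in Proposition \ref{resolution of ideal sheaf of S types E and B} (your vector space $\mathbb{H}$ is just the affine cone over the paper's projective space $\PP$). The one step you handle differently is the separation of the smooth type-$Z_B^{10}$ locus from the type-$Z_E^{10}$ locus: you use local constancy of the bidegree, i.e.\ deformation invariance of the cycle class $\mathfrak{a}P_1+\mathfrak{b}P_2$ in $H^4(Q^4,\ZZ)$, to isolate the bidegree-$(5,5)$ members, whereas the paper observes that both types are smooth points of ${Hilb}_{Q^4}^{P(t)}$ with tangent spaces of different dimensions ($36$ versus $38$), so they lie in disjoint open subsets of the open locus where the surface is smooth and $h^0(\mathcal{I}_{S/Q^4}(2))=0$. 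Both mechanisms work; yours is slightly more geometric and does not need the type-$Z_B^{10}$ half of Lemma \ref{Hilb is smooth at S type EII and type B}, while the paper's is purely local on the Hilbert scheme and avoids invoking deformation invariance of cycle classes. Two small remarks: for nonemptiness and generic behavior of the tensor construction the paper additionally cites \cite[Proposition~6.3]{Fano4foldsK3type} (a general tensor degenerates in codimension two along a type-$Z_E^{10}$ surface), whereas you extract nonemptiness of $\mathbb{H}^\circ$ from the ``both cases occur'' statement of Proposition \ref{resolution of ideal sheaf of S types E and B}, which is legitimate; and the flatness/openness issues you flag for $\mathbb{H}^\circ\to U_{Q^4}$ are genuine but routine (over the reduced base $\mathbb{H}^\circ$ constancy of the fiberwise Hilbert polynomial gives flatness), and the paper itself treats this point at the same level of informality by speaking only of a dominant rational map $\PP\dashrightarrow U_{Q^4}$, which is all unirationality requires.
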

\begin{proof}
    The only smooth surfaces parametrized by ${Hilb}_{Q^4}^{P(t)}$ are those of type $Z_{B}^{10}$ and $ Z_{E}^{10}$ by Lemma \ref{only two types of surfaces}. By the semi-continuity theorem, there is an open subset of ${Hilb}_{Q^4}^{P(t)}$ characterized by the equation $h^0(\mathcal{I}_{S/Q^4} (2))= 0$. Observe that surfaces of type $Z_{B}^{10}$ and $^{II} Z_{E}^{10}$ are contained in this open subset, while surfaces of type $^{I} Z_{E}^{10}$ are not. Since the locus of smooth surfaces in ${Hilb}_{Q^4}^{P(t)}$ is open, and the tangent space dimensions differ from one class of surfaces to the other, we see that there are two disjoint smooth open subsets of ${Hilb}_{Q^4}^{P(t)}$ parametrizing surfaces of type $Z_{B}^{10}$ and $^{II} Z_{E}^{10}$, respectively. We thus have a smooth open subset ${U}_{Q^4}$ of $Hilb_{Q^4}^{P(t)}$ of dimension $38$ parametrizing surfaces of type $^{II} Z_{E}^{10}$ by Lemma \ref{Hilb is smooth at S type EII and type B}.

    We have $Hom(\mathcal{E}\oplus \mathcal{E}'\oplus \ko_{Q^4}(-1),\ko_{Q^4}^{\oplus 6}) = ({V_6} ^{\ast} \otimes {V_4}^{\ast}) \oplus ({V_6}^{\ast}  \otimes {V_4}) \oplus ({V_6}^{\ast}  \otimes \overset{2}{\wedge}{ V_4}^\ast)  $, where we set ${V_4}^\ast =H^0(Q^4,\mathcal{E}^\ast )$, $V_4 =H^0(Q^4,{\mathcal{E}'}^\ast )$, $\overset{2}{\wedge}{ V_4}^\ast =H^0(Q^4,\ko_{Q^4} (1) )$, ${V_6} ^{\ast}=H^0(Q^4, \ko_{Q^4}^{\oplus 6})$ for vector spaces of dimensions $4$ and $6$ denoted by $V_4$ and $V_6$, respectively. In other words, three tensors define a morphism $\mathcal{E}\oplus \mathcal{E}'\oplus \ko_{Q^4}(-1)\rightarrow \ko_{Q^4}^{\oplus 6}$. If the tensors are general, then the rank of the induced morphism drops in codimension two over a surface of type $Z_{E}^{10}$ by \cite[Proposition~6.3]{Fano4foldsK3type}. Conversely, for each surface $S$ of type $^{II} Z_{E}^{10}$ we have an associated exact sequence \[0\rightarrow \mathcal{E}\oplus \mathcal{E}'\oplus \ko_{Q^4}(-1)\rightarrow \ko_{Q^4}^{\oplus 6} \rightarrow \mathcal{I}_{S/Q^4}(3)\rightarrow 0\] by Proposition \ref{resolution of ideal sheaf of S types E and B}. Therefore, an open subscheme of the projective space $\PP:=\PP(({V_6} ^{\ast} \otimes {V_4}) \oplus ({V_6} ^{\ast} \otimes \overset{2}{\wedge}{ V_4}^\ast)\oplus ({V_6} ^{\ast} \otimes {V_4}^{\ast}) )$ maps onto the open subscheme ${U}_{Q^4}$ of $Hilb_{Q^4}^{P(t)}$. This defines a dominant rational map $\PP\dashrightarrow {U}_{Q^4}$, and thus unirationality is proved.
\end{proof}

Let $\mathcal{U}_{Q^4}:=\sfrac{ {U}_{Q^4}}{Aut(Q^4)}$ be the coarse moduli space of surfaces of type $^{II} Z_{E}^{10}$ in $Q^4$.
\begin{pr}\label{a general surface of type II has Picard number 1}
    Let $\mathcal{F}_{7}^{[2]}$ be the universal Hilbert scheme of zero-dimensional schemes of length $2$ in a polarized K3 surface of genus $7$. Then there is an open immersion $\mathcal{U}_{Q^4}\hookrightarrow \mathcal{F}_{7}^{[2]}$. In particular, the minimal model of a very general surface $S$ of type $^{II} Z_{E}^{10}$ has Picard rank $1$.
\end{pr}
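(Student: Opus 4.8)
The plan is to define the comparison map $\Phi\colon \mathcal{U}_{Q^4}\to \mathcal{F}_{7}^{[2]}$ geometrically by contracting the two skew $(-1)$-lines. Given a surface $S\subseteq Q^4$ of type $^{II}Z_{E}^{10}$, its two skew $(-1)$-lines $\ell_1,\ell_2$ are simultaneously contracted by a morphism $\sigma\colon S\to S_0$ onto a smooth K3 surface of genus $7$; equivalently, $\sigma$ is the morphism induced by $|H_S+K_S|$, which by Theorem \ref{main thm} realizes $S$ as the blow-up of $S_0$ at two points $p,p'$. Sending $[S]$ to the pair $(S_0,\{p,p'\})$, where $S_0$ carries the genus-$7$ polarization $H_{S_0}=\sigma_\ast H_S$ and $\{p,p'\}=\{\sigma(\ell_1),\sigma(\ell_2)\}$ is a length-$2$ subscheme, produces the candidate $\Phi$. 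Since an automorphism of $Q^4$ carrying one surface to another induces an isomorphism of the associated contracted pairs, $\Phi$ descends to the quotient $\mathcal{U}_{Q^4}$; making this rigorous means performing the contraction in families, i.e. a relative contraction over the base of the universal family on $U_{Q^4}$.

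I would next show that $\Phi$ is injective by reconstructing $S\subseteq Q^4$ from $(S_0,\{p,p'\})$ up to the action of $Aut(Q^4)$. On the blow-up $S=\mathrm{Bl}_{p,p'}S_0$ one has $H_S=\sigma^\ast H_{S_0}-E_1-E_2$ with $E_1,E_2$ the exceptional curves, and a Riemann–Roch computation together with the obvious vanishings gives $h^0(H_S)=6$ with $H_S$ very ample, so $|H_S|$ embeds $S$ into $\PP^5$ canonically up to $PGL_6$. Because $S$ is of type $^{II}$, we have $h^0(\mathcal{I}_{S/Q^4}(2))=0$, and the sequence $0\to\mathcal{O}_{\PP^5}\to \mathcal{I}_{S/\PP^5}(2)\to \mathcal{I}_{S/Q^4}(2)\to 0$ then forces $h^0(\mathcal{I}_{S/\PP^5}(2))=1$: there is a unique quadric through $S$ in $\PP^5$. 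As all smooth quadric fourfolds are projectively equivalent, this recovers $S\subseteq Q^4$ up to $Aut(Q^4)$, yielding injectivity of $\Phi$.

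To upgrade this to an open immersion I would prove that $\Phi$ is étale between smooth irreducible varieties of the same dimension $23$ (note $\dim U_{Q^4}=38$ and $\dim Aut(Q^4)=15$, while $\dim\mathcal{F}_{7}^{[2]}=19+4=23$), for an injective étale morphism of smooth complex varieties is an open immersion. It suffices to check that the differential of $\Phi$ is an isomorphism at each point. Here I would identify $T_{[S]}\mathcal{U}_{Q^4}$ with $H^0(N_{S/Q^4})$ modulo the image of $\mathfrak{aut}(Q^4)$ (using $H^1(N_{S/Q^4})=0$ from Lemma \ref{Hilb is smooth at S type EII and type B}), and $T_{(S_0,\xi)}\mathcal{F}_{7}^{[2]}$ with the first-order polarized deformations of the pair $(S_0,\{p,p'\})$. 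The comparison rests on the fact that every first-order polarized deformation of $(S_0,\{p,p'\})$ is realized by an embedded deformation of $S$ inside a nearby smooth quadric, which can be translated back into $Q^4$ by an automorphism; this matches the two $23$-dimensional tangent spaces. I expect the main obstacle to lie precisely here, in the family version of the contraction together with this tangent-space comparison: one must verify that $\ell_1,\ell_2$ can be contracted simultaneously in families to give an honest morphism to $\mathcal{F}_{7}^{[2]}$, and that its differential is an isomorphism, which is a careful deformation-theoretic matching of embedded deformations modulo $Aut(Q^4)$ with polarized deformations of the pair.

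Finally, the Picard-rank statement follows formally. The space $\mathcal{F}_{7}^{[2]}$ is irreducible, since $\mathcal{F}_7$ is irreducible and $Hilb^2$ of a K3 surface is irreducible, so the open subset $\mathcal{U}_{Q^4}$ is dense. The locus in $\mathcal{F}_7$ where the K3 has Picard rank at least $2$ is a countable union of proper Noether–Lefschetz divisors, and its preimage under the forgetful map $\mathcal{F}_{7}^{[2]}\to \mathcal{F}_7$ is again a countable union of proper closed subsets. A very general point of the dense open $\mathcal{U}_{Q^4}$ therefore avoids this locus, so the minimal model $S_0$ of a very general surface $S$ of type $^{II}Z_{E}^{10}$ satisfies $rkPic(S_0)=1$.
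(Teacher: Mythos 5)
Your map $\Phi$ is the paper's map $\nu$, and your reconstruction argument for injectivity is sound: on $S=Bl_{p,p'}(S_0)$ one indeed has $h^0(H_S)=6$ (note $H_S\cdot K_S=2$, so $\chi(H_S)=2+(10-2)/2=6$), and the sequence $0\to\ko_{\PP^5}\to\mathcal{I}_{S/\PP^5}(2)\to\mathcal{I}_{S/Q^4}(2)\to 0$ together with the type $^{II}$ condition gives the unique quadric through $S$. The genuine gap is the step you yourself flag and then leave as an expectation: that $\Phi$ is \'etale. You never verify that the differential is an isomorphism; the matching of polarized first-order deformations of $(S_0,\{p,p'\})$ with embedded deformations of $S$ modulo $\mathfrak{aut}(Q^4)$ is only asserted, and your identification $T_{[S]}\mathcal{U}_{Q^4}\cong H^0(N_{S/Q^4})/\mathfrak{aut}(Q^4)$ additionally presupposes that the coarse quotient $\mathcal{U}_{Q^4}=\sfrac{U_{Q^4}}{Aut(Q^4)}$ is smooth of dimension $23$ at the given point, i.e.\ that stabilizers are trivial (or at least act trivially on a slice) --- at a surface with extra automorphisms the quotient could a priori be singular, and ``injective \'etale $\Rightarrow$ open immersion'' would need care there. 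Since openness of the image is the entire content of the statement, the proposal as written does not prove it.

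The irony is that your injectivity argument already contains the repair, and it is exactly what the paper does: carried out in families, the reconstruction is an inverse \emph{morphism}, not merely set-theoretic injectivity. The paper introduces the open locus $V\subseteq\mathcal{F}_7^{[2]}$ of triples $(S_0,p,p')$ for which $|H_{S_0}-E_p-E_{p'}|$ embeds $Bl_{p,p'}(S_0)$ into $\PP^5$ (very ampleness is an open condition), yielding a morphism $\mu:V\rightarrow \sfrac{Hilb_{\PP^5}^{P(t)}}{PGL(6)}$; it identifies $\mathcal{U}_{Q^4}$ with an open subset of this target via $\sfrac{U_{Q^4}}{Aut(Q^4)}\cong \sfrac{U}{PGL(6)}$ for the $PGL(6)$-invariant open locus $U\subseteq Hilb_{\PP^5}^{P(t)}$ of type $^{II}$ surfaces (this is precisely where your unique-quadric observation lives); and then $\mu$ and $\nu$ are mutually inverse, so $\mathcal{U}_{Q^4}\cong\mu^{-1}(\mathcal{U}_{Q^4})$ is open in $\mathcal{F}_7^{[2]}$ with no tangent-space comparison and no dimension count at all (your count $38-15=23=19+4$ is correct but becomes unnecessary). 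Your closing Noether--Lefschetz paragraph is fine, and slightly more detailed than the paper's one-line appeal to the very general polarized K3 of genus $7$; but to complete your route you must either replace the \'etale step by this two-sided inverse, or actually carry out the deformation-theoretic matching and the stabilizer analysis you postponed.
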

\begin{proof}
    The locus of triples $(S_0, p, p')\in \mathcal{F}_{7}^{[2]}$, where $p$ and $ p'$ are distinct points on a K3 surface $S_0$ of genus $7$ polarized by $H_{S_0}$, and such that the linear system $|H_{S_0}-E_{p} -E_{p'}|$ embeds $Bl_{p ,p'}(S_0)$ into $\PP^5$, is an open set $V$, since very ampleness is an open condition. Observe that such surfaces in $\PP^5$ have Hilbert polynomial $P(t)=5t^2 -t+2$.
    
    Using semicontinuity arguments it is not difficult to see that there exists an open subset $U\subseteq Hilb_{\PP^5} ^{P(t)}$ parametrizing surfaces of type $^{II} Z_{E}^{10}$ in $\PP^5$. This open subset is invariant under the action of $PGL(6)$. Moreover, the moduli spaces $\mathcal{U}_{Q^4}=\sfrac{U_{Q^4}}{Aut(Q^4)}$ and $\sfrac{U}{PGL(6)}$ are isomorphic. Hence, the moduli space $\sfrac{Hilb_{\PP^5} ^{P(t)}}{PGL(6)}$ contains $\mathcal{U}_{Q^4}$ as an open subset. In particular, the morphism $\mu: V\rightarrow \sfrac{Hilb_{\PP^5} ^{P(t)}}{PGL(6)}$ given by $(S_0,p,p')\mapsto [Bl_{p ,p'}(S_0)\hookrightarrow\PP^5]$ can be restricted to the non-empty\footnote{See Remark \ref{remark}.} open set $\mu^{-1}(\mathcal{U}_{Q^4})\subseteq V\subseteq \mathcal{F}_{7}^{[2]}$. 
    
    On the other hand, if $S$ is a surface of type $^{II} Z_{E}^{10}$, then the linear system $|H_S +K_S|$ is base-point free by \cite[Proposition~2.2]{sommese.81}. It is easy to see that this linear system induces a morphism $S\rightarrow \PP^7$ that realizes the blow-up map of a K3 surface $S_0\subseteq \PP^7$ of degree $12$ at two points $p$ and $p'$. This allows us to define the morphism $ \nu:\mathcal{U}_{Q^4}\rightarrow \mathcal{F}_{7}^{[2]}$ by $[S]\mapsto (S_0,p ,p')$. Evidently, $\mu$ and $\nu$ are inverse to each other. Hence, $\mathcal{U}_{Q^4}\cong \mu^{-1}(\mathcal{U}_{Q^4})\subseteq \mathcal{F}_{7}^{[2]}$. The last claim follows from the well-known fact that a very general element of the moduli space of polarized K3 surfaces of genus $7$ has Picard rank $1$.
\end{proof}

\section{The family of Fano fourfolds K3-33}\label{section K3-33}

\subsection{Construction, notation, and conventions.} 

We describe the construction of a family of Fano fourfolds that induce special birational transformations of $Q^4$.

\begin{thm} Let $X$ be the zero locus of a general section of the vector bundle $\mathcal{U}_{G(2,4)}^\ast (0,1) \oplus \ko(1,1) \oplus \mathcal{Q}_{G(2,4)}(0,1)$ defined over the space $ G(2,4)\times\PP^5 $. Then $X$ is a smooth Fano fourfold of K3 type and the projections $\sigma : X\rightarrow G(2,4)$ and $\tau : X\rightarrow \PP^5 $ are both realized as the blow-up of a four-dimensional smooth quadric in $\PP^5$ along a surface of type $^{II} Z_{E}^{10}$. The corresponding minimal models are derived-equivalent K3 surfaces. 
 \end{thm}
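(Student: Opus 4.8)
The plan is to realize each projection as a fibration by linear sections and to extract the blow-up structure from the linear algebra of the defining section. Write $G(2,4)=\mathbb{G}(1,\PP(V_4))$ with $V_4\cong\CC^4$ and $\PP^5=\PP(V_6)$ with $V_6\cong\CC^6$, and note that $E:=\mathcal{U}^\ast(0,1)\oplus\ko(1,1)\oplus\mathcal{Q}(0,1)$ is globally generated of rank $5$ on the nine-dimensional $G(2,4)\times\PP^5$, so that a general section $s$ has smooth zero locus $X=Z(s)$ of dimension $4$ by Bertini. Since $H^0(\mathcal{U}^\ast)=V_4^\ast$, $H^0(\ko_{G(2,4)}(1))=\wedge^2V_4^\ast$, $H^0(\mathcal{Q})=V_4$ and $H^0(\ko_{\PP^5}(1))=V_6^\ast$, the section is a triple of tensors $s=(s_1,s_2,s_3)\in(V_4^\ast\otimes V_6^\ast)\oplus(\wedge^2V_4^\ast\otimes V_6^\ast)\oplus(V_4\otimes V_6^\ast)$. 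A direct adjunction computation gives $\det E=\ko(3,5)$ and hence $K_X=(K_{G(2,4)\times\PP^5}\otimes\det E)|_X=\ko(-1,-1)|_X$, which is anti-ample because $\ko(1,1)$ is; thus $X$ is Fano. That $X$ is of K3 type I would read off from the blow-up description below via the Hodge-theoretic blow-up formula, which yields $h^{3,1}(X)=p_g(S)=1$, or simply cite the family K3-33 of \cite{Fano4foldsK3type}.

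First I would analyze $\sigma\colon X\to G(2,4)$. Over $[W]\in G(2,4)$ the three vanishing conditions $s_1(v)|_W=0$, $\langle s_2(v),W\rangle=0$, $s_3(v)\in W$ are linear in $v\in V_6$ and assemble into a morphism of bundles $\Phi\colon V_6\otimes\ko_{G(2,4)}\to\mathcal{M}:=\mathcal{U}^\ast\oplus\ko_{G(2,4)}(1)\oplus\mathcal{Q}$ of ranks $6$ and $5$, with $\sigma^{-1}([W])=\PP(\ker\Phi_{[W]})$. For general $[W]$ the map $\Phi_{[W]}$ is surjective with one-dimensional kernel, so the fiber is a single point and $\sigma$ is birational; the fiber becomes a $\PP^1$ exactly over the degeneracy locus $S:=\{\operatorname{rk}\Phi\le 4\}$, of expected codimension $2$ and, by Thom--Porteous, of class $[S]=5P_1+5P_2$, i.e.\ bidegree $(5,5)$ and degree $10$. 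Dualizing the surjection $\Phi$, and using $\mathcal{E}=\mathcal{U}$, $\mathcal{E}'=\mathcal{Q}^\ast$ together with $c_1(\operatorname{coker}\Phi^\vee)=3H$, produces the exact sequence
\[
0\to\mathcal{E}\oplus\mathcal{E}'\oplus\ko_{Q^4}(-1)\xrightarrow{\ \Phi^{\vee}\ }\ko_{Q^4}^{\oplus 6}\to\mathcal{I}_{S/Q^4}(3)\to 0 ,
\]
which is precisely the ideal-sheaf resolution singled out in Proposition \ref{resolution of ideal sheaf of S types E and B} for a surface of type $^{II} Z_{E}^{10}$ not contained in a quadratic complex; for general $s$ the locus $S$ is smooth of dimension $2$ by \cite[Proposition~6.3]{Fano4foldsK3type}. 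To upgrade ``birational morphism of smooth fourfolds with $\PP^1$-fibers over a smooth surface'' to ``blow-up'', I would check that the exceptional locus of $\sigma$ is an irreducible divisor restricting to $\PP(N_{S/G(2,4)})$ and invoke the standard criterion identifying such a divisorial contraction onto a smooth surface with the smooth blow-up $Bl_S\,G(2,4)$.

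The analysis of $\tau\colon X\to\PP^5$ is entirely parallel. Fixing $[v]$, the same three conditions cut $G(2,4)$ along the intersection of an $\alpha$-plane, a $\beta$-plane and a Pl\"ucker hyperplane, which is non-empty precisely when the quadratic form $q(v):=s_1(v)(s_3(v))$ on $V_6$ vanishes; hence $\operatorname{im}\tau$ lies in the quadric fourfold $Q'=\{q=0\}\subseteq\PP^5$, smooth for general $s$, and over a general point of $Q'$ the fiber is a point, so $\tau$ is birational onto $Q'$. The whole construction is invariant under the duality $V_4\leftrightarrow V_4^\ast$ which identifies $G(2,V_4)\cong G(2,V_4^\ast)$, interchanges the two rulings, and swaps $\mathcal{U}^\ast$ with $\mathcal{Q}$; this makes the roles of the rulings symmetric, so the center $T$ of $\tau$ is again a smooth surface of type $^{II} Z_{E}^{10}$ and $\tau=Bl_T\,Q'$ by the same criterion. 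Contracting the two skew $(-1)$-lines on $S$ and on $T$ yields their minimal models $S_0$ and $T_0$, which are K3 surfaces of genus $7$; their derived equivalence I would take from \cite{Fano4foldsK3type}, or deduce from the refinement in Theorem \ref{main thm} realizing $T_0$ as a moduli space of stable sheaves on $S_0$.

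I expect the blow-up identification to be the main obstacle. Producing the degeneracy fibration and the resolution above is formal, but passing from the soft data of a divisorial contraction with $\PP^1$-fibers to the assertion that $\sigma$, and likewise $\tau$, is literally a smooth blow-up requires genuine control of the exceptional divisor and of the relative polarization, and it is exactly here that the genericity of $s$---ensuring simultaneously the smoothness of $S$, of $T$, and of the quadric $Q'$, and that every fiber over the center is a reduced $\PP^1$---must be used with care.
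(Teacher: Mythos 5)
Your proposal is correct in outline, but note that the paper itself does not prove this theorem at all: its ``proof'' is a bare citation to \cite[Fano K3-33]{Fano4foldsK3type}, so what you have written is a blind reconstruction of the argument in that reference rather than of anything in this paper. As such a reconstruction it is essentially sound, and several of your computations can be confirmed independently: $\det$ of the bundle is $\ko(3,5)$, so $K_X=\ko(-1,-1)|_X$ as in Proposition \ref{intersection of codim 2 cycles in k3-33}; Thom--Porteous for $\Phi\colon V_6\otimes\ko\to\mathcal{U}^\ast\oplus\ko(1)\oplus\mathcal{Q}$ gives $c_1^2-c_2=9(\sigma_2+\sigma_{11})-(4\sigma_2+4\sigma_{11})=5\sigma_2+5\sigma_{11}$, i.e.\ bidegree $(5,5)$; the dualized presentation is exactly the resolution of Proposition \ref{resolution of ideal sheaf of S types E and B} for type $^{II}Z_E^{10}$ (twisting it by $2$ even gives $h^0(\mathcal{I}_{S/Q^4}(2))=0$ directly, so you need not only cite \cite[Proposition~6.3]{Fano4foldsK3type}); and your worry about the blow-up identification resolves cleanly, since the rank-$\leq 3$ locus of $\Phi$ has expected codimension $(6-3)(5-3)=6>4$ and is empty for general $s$, so all fibers of $\sigma$ have dimension $\leq 1$ and Ando's theorem \cite[Theorem~2.3]{ando} --- which this paper uses elsewhere for exactly this purpose --- yields that $\sigma$ is the smooth blow-up along $S$. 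Your identification of the image of $\tau$ as $Q'=\{s_1(v)(s_3(v))=0\}$ is also right, and its smoothness for general $s$ follows since $q$ is the restriction of the hyperbolic form on $V_4^\ast\oplus V_4$ along the general embedding $v\mapsto(s_1(v),s_3(v))$ of $V_6$.

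Two caveats. First, your symmetry argument for $\tau$ is too loose as stated: the duality $V_4\leftrightarrow V_4^\ast$ is an automorphism of the family that exchanges $\mathcal{U}^\ast(0,1)$ with $\mathcal{Q}(0,1)$ but fixes the two projections; it does not interchange $\sigma$ and $\tau$, so it cannot by itself transfer the degeneracy-locus analysis to the second projection. What is needed is an identification $Q'\cong G(2,W_4)$ for an auxiliary four-dimensional space (every smooth quadric fourfold is such a Grassmannian), after which one checks that $X\subseteq G(2,V_4)\times G(2,W_4)$ admits a presentation of the same shape with the roles reversed --- this self-duality is precisely what \cite[Fano K3-33]{Fano4foldsK3type} establishes, and absent that check your fiber description over $Q'$ (pencil of lines through $p_v$ in $h_v$, cut by the Pl\"ucker hyperplane of $s_2(v)$) only gives birationality and the rank-drop surface $T$, not yet its type. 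Second, your fallback of deducing the derived equivalence of $S_0$ and $T_0$ from Theorem \ref{main thm} would be circular in the paper's logical order, since the proof of Theorem \ref{main thm} itself invokes \cite[Lemma~3.4]{Fano4foldsK3type} for that equivalence; cite the external reference directly, as your first option does.
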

 \begin{proof}
     See \cite[Fano K3-33]{Fano4foldsK3type}.
 \end{proof}

Any member $X$ of the family of fourfolds constructed in the previous theorem is called Fano fourfold K3-33. It is a so-called Fano manifold of K3-type, which in this case just means that $h^{3,1}(X)=1$.

The image of the second projection $\tau : X\rightarrow \PP^5$ is a smooth quadric $Q^4$. On the other hand, the first projection $\sigma : X\rightarrow G(2,4) $ is surjective. Since the Grassmannian $G(2, 4)$ is embedded into $\PP^5$ as a smooth quadric via the Plücker embedding, we will identify it with $Q^4$ as well. Let $S\subseteq Q^4$ (resp. $T\subseteq Q^4$) be the blow-up center of $\sigma$ (resp. $\tau$). We denote by $d$ and $d '$ (resp. $\delta$ and $\delta '$) the pair of skew $(-1)$-lines contained in $S$ (resp. $T$). Also, write $S_0$ (resp. $T_0$) for the minimal model of $S$ (resp. $T$).

We fix notation by means of the following diagram: 

\begin{center}
\begin{tikzcd}
                               &             & E \arrow [r, hook,  "j_1"] \arrow [ld, "\sigma_|"'] & X \arrow [ld, "\sigma"'] \arrow [rd, "\tau"] & E' \arrow [l,  hook'   ,  "j_2"'] \arrow [rd, "\tau_|"] &             &                     \\
d \cup d ' \arrow [r, hook] & S \arrow [r, hook] & Q^4                      &                         & Q^4                      & T \arrow [l, hook' ] & \delta \cup \delta ' \arrow [l, hook']
\end{tikzcd}
\end{center}

The K3 surfaces $S_0$ and $T_0$ are derived-equivalent, i.e., they are Fourier-Mukai partners. We will show that $S_0$ is not isomorphic to $T_0$ if their Picard rank is $1$. This holds for almost all members of the family K3-33, according to the following proposition.

\begin{pr}
    For a very general member $X$ of the family Fano fourfolds K3-33 we have $rk(Pic(S_0))=rk(Pic(T_0))=1$. 
\end{pr}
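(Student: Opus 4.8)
The plan is to reduce the two equalities to a single one by exploiting that $S_0$ and $T_0$ are Fourier--Mukai partners, and then to transport the genericity already established for the surfaces $S$ themselves to the members $X$ of the family.

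First I would reduce to a statement about $S_0$ alone. By the construction of the family, the minimal models $S_0$ and $T_0$ are derived-equivalent, hence Fourier--Mukai partners. It is well known (Mukai, Orlov) that derived-equivalent K3 surfaces have Hodge-isometric transcendental lattices; since for any K3 surface one has $rk(T(\,\cdot\,)) + rk(Pic(\,\cdot\,)) = 22$, this forces $rk(Pic(S_0)) = rk(Pic(T_0))$. Consequently it suffices to prove that $rk(Pic(S_0)) = 1$ for a very general $X$.

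Next I would produce a dominant moduli map from the family to $\mathcal{U}_{Q^4}$. Let $\mathcal{M}$ denote the parameter space of the family K3-33; since its members are the zero loci of general sections of a fixed vector bundle, $\mathcal{M}$ is irreducible. The first projection $\sigma\colon X\to G(2,4)=Q^4$ realizes $X$ as the blow-up of $Q^4$ along the surface $S$ of type $^{II} Z_{E}^{10}$, and $X\cong Bl_{S}Q^4$ is recovered from $S$; this yields a rational map $\alpha\colon \mathcal{M}\dashrightarrow \mathcal{U}_{Q^4}$, $[X]\mapsto [S]$, which is generically injective on isomorphism classes. To see that $\alpha$ is dominant I would invoke the converse part of Theorem \ref{main thm}: a very general surface $S$ of type $^{II} Z_{E}^{10}$ is the base locus scheme of a special cubo-cubic transformation of $Q^4$, whose resolution is precisely a member $Bl_{S}Q^4$ of the family K3-33. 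Thus every surface in a dense subset of $\mathcal{U}_{Q^4}$ arises as such a blow-up center, so $\alpha$ is dominant. As a consistency check, $\dim \mathcal{U}_{Q^4} = 38 - \dim Aut(Q^4) = 38 - 15 = 23 = \dim \mathcal{F}_{7}^{[2]}$, in agreement with the open immersion of Proposition \ref{a general surface of type II has Picard number 1}.

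Finally I would conclude by a standard countable-union argument. By Proposition \ref{a general surface of type II has Picard number 1}, there is a countable union $Z$ of proper closed subsets of $\mathcal{U}_{Q^4}$ outside of which the minimal model $S_0$ satisfies $rk(Pic(S_0)) = 1$. Since $\alpha$ is a dominant morphism of irreducible varieties on its domain of definition, $\alpha^{-1}(Z)$ is again a countable union of proper closed subsets of $\mathcal{M}$. Hence for a very general $X\in \mathcal{M}$ the center $S$ lies outside $Z$, so $rk(Pic(S_0)) = 1$, and by the first step $rk(Pic(T_0)) = 1$ as well. I expect the main obstacle to be the dominance of $\alpha$, that is, the assertion that a general surface of type $^{II} Z_{E}^{10}$ occurs as the blow-up center of some member of K3-33; this is essentially the converse construction of Theorem \ref{main thm}, and care is needed to guarantee that the ``very general'' locus for the surfaces genuinely pulls back, along $\alpha$, to a ``very general'' locus for the fourfolds, rather than merely to a large-image locus.
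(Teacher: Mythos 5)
Your overall skeleton --- a dominant map from the parameter space of the family onto the moduli of surfaces of type $^{II} Z_{E}^{10}$, followed by the standard countable-union pullback, plus the Fourier--Mukai reduction from $T_0$ to $S_0$ --- is structurally the same as the paper's argument, and the FM-partner step (derived-equivalent K3 surfaces have Hodge-isometric transcendental lattices, hence equal Picard ranks) is a clean way to handle the $T_0$ half, which the paper leaves implicit. However, your justification of the crucial dominance step has a genuine gap, and it is exactly the one you flagged yourself. From the converse part of Theorem \ref{main thm} (more precisely, from Propositions \ref{S induces a special birational transformation onto a quadric} and \ref{geometric constraints on S}) you get that a very general $S$ is the base locus of a special cubo-cubic transformation resolved by $Bl_S(Q^4)$; but it does not follow from this that $Bl_S(Q^4)$ is a \emph{member of the family K3-33}, i.e., the zero locus of a (general) section of $\mathcal{U}_{G(2,4)}^\ast (0,1) \oplus \ko(1,1) \oplus \mathcal{Q}_{G(2,4)}(0,1)$ on $G(2,4)\times\PP^5$. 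Membership in the family is defined by the existence of such a section, not by the blow-up/birational description, so your sentence ``whose resolution is precisely a member $Bl_S Q^4$ of the family K3-33'' asserts precisely what needs proof. There is also a latent ordering worry in invoking Theorem \ref{main thm} here, since this proposition precedes it in the paper; that worry is dispellable (the converse construction does not use the proposition), but the membership claim is not.

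The bridge that closes the gap is the tensor correspondence, and it is how the paper actually argues: by Proposition \ref{resolution of ideal sheaf of S types E and B}, every surface of type $^{II} Z_{E}^{10}$ admits the resolution $0\rightarrow \mathcal{E}\oplus \mathcal{E}'\oplus \ko_{Q^4}(-1)\rightarrow \ko_{Q^4}^{\oplus 6} \rightarrow \mathcal{I}_{S/Q^4}(3)\rightarrow 0$, so $S$ arises from a tensor in $\PP=\PP(({V_6}^{\ast}\otimes V_4)\oplus({V_6}^{\ast}\otimes \overset{2}{\wedge}{V_4}^\ast)\oplus({V_6}^{\ast}\otimes {V_4}^{\ast}))$; this is what makes the map $\PP\dashrightarrow U_{Q^4}$ dominant (proof of Proposition \ref{open subset parametrizing surfaces of type II}). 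Since the family K3-33 is parametrized by an open subscheme of the \emph{same} $\PP$, with $S$ the degeneracy locus of the morphism induced by the tensor defining $X$, a very general point of $\PP$ lies outside the preimage of the countable bad locus of Proposition \ref{a general surface of type II has Picard number 1}, and the conclusion follows --- no detour through the birational transformation is needed. In short: replace your appeal to Theorem \ref{main thm} by the ideal-sheaf resolution and the identification of the family's parameter space with $\PP$, and your argument becomes the paper's proof, with your FM reduction as a nice explicit supplement.
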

\begin{proof} 
    We have $H^0(G(2,4)\times\PP^5 , \mathcal{U}_{G(2,4)}^\ast (0,1) )={V_4}^{\ast}\otimes {V_6} ^{\ast} $, $H^0(G(2,4)\times\PP^5,\ko (1,1))=\overset{2}{\wedge}{ V_4}^{\ast} \otimes {V_6} ^{\ast} $, and $H^0(G(2,4)\times\PP^5, \mathcal{Q}_{G(2,4)} (0,1))={V_4} \otimes {V_6} ^{\ast} $. Hence, the family of Fano fourfolds K3-33 is parametrized by an open subscheme of $\PP=\PP(({V_6} ^{\ast} \otimes {V_4}) \oplus ({V_6} ^{\ast} \otimes \overset{2}{\wedge}{ V_4}^\ast)\oplus ({V_6} ^{\ast} \otimes {V_4}^{\ast}) )$. In fact, in \cite[Fano K3-33]{Fano4foldsK3type} the surface $S$ corresponding to a fourfold $X$ belonging to the family is defined as the degeneracy locus of the morphism $\mathcal{E}\oplus \mathcal{E'}\oplus \ko_{Q^4} (-1)=\mathcal{U}_{G(2,4)}\oplus \mathcal{Q}_{G(2,4)}^{\ast}\oplus \ko_{G(2,4)}(-1)\rightarrow {V_6} ^{\ast}\otimes \ko_{G(2,4)} =\ko_{Q^4}^{\oplus 6}$ that is induced by the same element of $\PP$ defining $X$. Since the rational map $\PP\dashrightarrow U_{Q^4}$ mapping a Fano fourfold $X$ of type K3-33 to its corresponding surface $S$ is dominant (see the proof of Proposition \ref{open subset parametrizing surfaces of type II}), the claim follows by Proposition \ref{a general surface of type II has Picard number 1}.
\end{proof}

In view of the previous proposition, we will assume from now on that $S_0$, or equivalently, $T_0$ has Picard rank $1$. In order to show that $S_0\ncong T_0$, we start by applying the blow-up formula for singular cohomology to $X$ to get a composition of Hodge isometries:
\[H^{4}(Q^4,\ZZ)\oplus H^{2}(S,\ZZ)\xrightarrow  {\sigma^{\ast}+j_{1\ast}{\sigma_{|}}^{\ast}} H^{4}(X,\ZZ) \xleftarrow  {\tau^{\ast}+j_{2\ast}{\tau_{|}}^{\ast}}H^{4}(Q^4,\ZZ)\oplus H^{2}(T,\ZZ)\]

We denote by $H$ (resp. $H'$) the pullback $\sigma ^\ast (H)$ (resp. $\tau ^\ast (H)$) on $X$. We use the same symbol to denote a codimension $2$ cycle in $X$ coming from $S$ (resp. $T$) via $j_{1\ast}{\sigma_{|}}^{\ast}$ (resp. $j_{2\ast}{\tau_{|}}^{\ast}$). For example, $j_{1\ast}{\sigma_{|}}^{\ast}(d)$ is simply denoted by $d$. On the other hand, we have $H^{4}(Q^4 ,\ZZ)= CH^2(Q^4)=\ZZ P_1 \oplus \ZZ P_2$. We denote by $P_1$ and $P_2$ the pullbacks $\sigma ^\ast (P_1)$ and $\sigma ^\ast (P_2)$ on $X$. The notation $P_1 '=\tau ^\ast (P_1)$ and $P_2 '=\tau ^\ast (P_2)$ is not used, although it is left implicit. We thus have an identification $H^{4}(X,\ZZ)=H^{4}(Q^4,\ZZ)\oplus H^{2}(S,\ZZ)=H^{4}(Q^4,\ZZ)\oplus H^{2}(T,\ZZ)$. Observe that this lattice is unimodular, although it is not even.

Denote by $H_S$ (resp. $H_T$) the polarization of degree $10$ of $S$ (resp. $T$). Recall that the linear system $|H_S +d+d'|$ (resp. $|H_T +\delta+\delta '|$) induces a morphism to $\PP^7$ that realizes the blow-up map $S\rightarrow S_0$ (resp. $T\rightarrow T_0$). The induced polarization of degree $12$ on $S_0$ (resp. $T_0$) is denoted by $H_{S_0}$ (resp. $H_{T_0}$). Thus, $NS(S)\subseteq H^{2}(S,\ZZ)$ is freely generated by the divisor classes $H_S +d +d '$, $d$, and $d '$. We have a similar statement for $T$.

The algebraic lattice $A_1$ of $H^{4}(X,\ZZ)=H^{4}(Q^4,\ZZ)\oplus H^{2}(S,\ZZ)$ is by definition the subgroup generated by algebraic cycles of codimension $2$ in $X$, i.e., $A_1=H^{4}(Q^4,\ZZ)\oplus NS(S)$. The orthogonal lattice of $A_1$ in $H^{4}(X,\ZZ)$ with respect to the intersection form is evidently $T(S_0)$, the transcendental lattice of the K3 surface $S_0$. 

The intersection matrix of the lattice $A_1$ is 

\begin{center}
   \begin{tabular}{c| c c c c c}
             & $P_1$ & $P_2$ & $H_S +d +d '$ & $d$ & $ d '$ \\
             \hline
     $P_1$ & $1$       &  $0$       & $0$                      &  $0$      &  $0$       \\
    $ P_2$ & $0$        & $1$        & $0$                      &  $0$      &  $0$       \\
    $ H_S +d +d '$ & $0$       & $0$        &      $12$                 &  $0$      &  $0$       \\
     $d  $     &  $0$       & $0$                     &$0$ &     $-1$   &  $0$       \\
     $d  '$     & $0$        & $0$                      &$0$ & $0$      &        $-1$ 
    \end{tabular} 
\end{center}
    
We can define an algebraic lattice $A_2$ in $H^{4}(X,\ZZ)=H^{4}(Q^4,\ZZ)\oplus H^{2}(T,\ZZ)$ in the same way as before, and for the induced isometry of Hodge structures $\phi_A \oplus \phi_T : A_1\oplus T(S_0)\rightarrow A_2 \oplus T(T_0)$ we have the following commutative diagram of discriminant groups by \cite[Corollary~1.5.2]{nikulin}:

\begin{center}
\begin{tikzcd}
dA_1 \arrow[d, "\cong"'] \arrow[r, "\phi_A"] & dA_2 \arrow[d, "\cong"] \\
dT(S_0)(-1) \arrow[r, "\phi_T"]              & dT(T_0)(-1)            
\end{tikzcd}
\end{center}

It is clear that $dA_1$ is isomorphic to $\ZZ_{12}$, with generator $\frac{H_S +d+d'}{12}\text{ mod }A_1$. Moreover, if $b_{S}:=|H_S+d+d'|:S\rightarrow S_0$ is the blow-up map, then $dA_1=d(H^4(Q^4 , \ZZ) \oplus NS(S))=d(\ZZ P_1 \oplus \ZZ P_2) \oplus d(b_{S}^{\ast}NS(S_0))\oplus d(\ZZ d\oplus\ZZ d')=d(b_{S}^{\ast}NS(S_0))$. Hence, $b_{S}^{\ast}: dNS(S_0)\rightarrow d(b_{S}^{\ast}NS(S_0))=dA_1$ is an isometry given by $\frac{H_{S_0}}{12}\text{ mod }NS(S_0) \mapsto \frac{H_S+d+d'}{12}\text{ mod }A_1$. An analogous statement holds for $A_2$.

\subsection{Geometric action on cycles.}

Next, we want to understand how cycles coming from $\sigma$ and cycles coming from $\tau$ relate to each other in $H^4(X, \ZZ)$. This is fundamental, for then we can find a lattice-theoretic property of the Hodge isometry that will ultimately help us to prove that $S_0$ is not isomorphic to $T_0$.

\begin{pr}\label{intersection of codim 2 cycles in k3-33}
The relations $K_X=-4H+E=-4H'+E'=-H-H'$ hold in $Pic(X)$. In particular, we have the cubo-cubic relations $H'=3H -E\text{ and }H=3H' -E'$. Moreover, the following relations also hold in $CH^2(X)$:

\begin{enumerate}
\item $H E=H_S\text{ and }H' E'=H_T$.
\item $3H ^2 -H_S=3{H'} ^2 -H_T$.
\item $E^2=-5H ^2+d+d ' + 4H_S\text{ and } {E'}^2=-5{H'} ^2+\delta+\delta ' + 4H_T$.
\end{enumerate}
\end{pr}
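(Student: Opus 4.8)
The statement collects relations in $\mathrm{Pic}(X)$ and $CH^2(X)$ for the Fano fourfold $X$ of type K3-33, which admits two blow-up structures $\sigma:X\to Q^4$ (center $S$) and $\tau:X\to Q^4$ (center $T$). I want to derive everything from standard blow-up formulas together with the numerical invariants of the surfaces $S$ and $T$, which are of type ${}^{II}Z_E^{10}$ and hence have known bidegree $(5,5)$, degree $\mathfrak d=10$, sectional genus $\pi=7$, and $\chi(\ko_S)=2$.

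\begin{proof}[Proof plan]
The plan is to exploit the two blow-up structures simultaneously. First I would recall that for the blow-up $\sigma:X\to Q^4$ along the smooth surface $S$ with exceptional divisor $E$, the canonical bundle formula gives $K_X=\sigma^\ast K_{Q^4}+E=-4H+E$, since $K_{Q^4}=-4H$. Symmetrically, $K_X=-4H'+E'$ from the structure $\tau$. To obtain $K_X=-H-H'$, I would combine these with the \emph{cubo-cubic} nature of $\varphi$: the linear system of cubics through $S$ realizes the map, so $H'=3H-E$, and dually $H=3H'-E'$. Substituting $E=4H+K_X$ into $H'=3H-E$ yields $H'=-H-K_X$, i.e. $K_X=-H-H'$, and the two cubo-cubic relations follow at once from the same substitution; so really the heart of the first assertion is establishing $H'=3H-E$, which comes directly from the description of $\varphi$ by the system $|3H-E|$.

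For the $CH^2(X)$ relations I would argue as follows. Relation \emph{(a)}, $HE=H_S$, is the projection/restriction formula: intersecting the exceptional divisor $E$ with the pullback of the hyperplane class records the hyperplane section of the center, so $j_{1\ast}(j_1^\ast H)=H_S$ under our identifications, and symmetrically $H'E'=H_T$. Relation \emph{(b)} is then purely formal: starting from $H'=3H-E$, I would multiply by $H'$ and use $H'E'=H_T$ together with the dual relation, or more directly compute $3H^2-H_S$ and $3H'^2-H_T$ and check they agree using $H^3=H'^3=\deg Q^4\cdot(\text{line class})$ and the cubo-cubic substitution; the cleanest route is to square the relation $H+H'=-K_X=4H-E$ in a controlled way, or simply to intersect $H'=3H-E$ with $H$ and with $H'$ and compare. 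The subtle point is bookkeeping of which classes live in $H^4(Q^4,\ZZ)$ versus in the image of $NS(S)$, but the identification $H^4(X,\ZZ)=H^4(Q^4,\ZZ)\oplus H^2(S,\ZZ)=H^4(Q^4,\ZZ)\oplus H^2(T,\ZZ)$ fixed earlier makes this unambiguous.

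The genuinely substantive computation is relation \emph{(c)}, the self-intersection $E^2=-5H^2+d+d'+4H_S$. Here I would use the standard formula for the square of the exceptional divisor of a blow-up along a smooth surface: $E^2=-j_{1\ast}\,c_1(N_{S/Q^4})=-j_{1\ast}(K_S+4H_S)$, invoking Proposition \ref{total chern class of normal bundle of S} which gives $c_1(N_{S/Q^4})=K_S+4H_S$. Thus $E^2=-j_{1\ast}(K_S)-4H_S$ as classes pushed into $X$, and it remains to identify $-j_{1\ast}K_S$ with $-5H^2+d+d'+8H_S$ so that the $H_S$ terms combine to $+4H_S$. This is where the geometry of type ${}^{II}Z_E^{10}$ enters decisively: $S$ is a K3 surface blown up at two points, so $K_S=d+d'$ is the sum of the two $(-1)$-lines, and $j_{1\ast}(d+d')=d+d'$ under our notation. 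The remaining task is to express $j_{1\ast}H_S$ and the ambient contributions in terms of $H^2$, $P_1$, $P_2$; the coefficient $-5$ should emerge from the bidegree $(5,5)$ of $S$, via $S=5P_1+5P_2$ in $CH^2(Q^4)$ and the relation $H^2=P_1+P_2$. I expect the pushforward identity $j_{1\ast}H_S=4H_S$ (matching Corollary \ref{square of K_S}) and the careful tracking of $S^2=\mathfrak a^2+\mathfrak b^2=50$ against $H^2\cdot S$ to be the main obstacle, since it requires reconciling the intrinsic self-intersection computation on $X$ with the Chow-ring data of the congruence $S$ inside $Q^4$. The identical argument applied to $T$ gives ${E'}^2=-5H'^2+\delta+\delta'+4H_T$, completing the proof.
\end{proof}
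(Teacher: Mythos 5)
Your treatment of relation \emph{(c)} rests on an incorrect formula, and this is the only genuinely substantive computation in the proposition. The ``standard formula'' you invoke, $E^2=-j_{1\ast}c_1(N_{S/Q^4})$, is not the self-intersection of the exceptional divisor over a codimension-two center. One has $E^2=j_{1\ast}(E|_E)$, and pushing forward by $\sigma$ gives $\sigma_\ast(E^2)=-[S]=-5H^2$, so $E^2$ necessarily has a nonzero component in the $H^4(Q^4,\ZZ)$-summand; the correct decomposition is $E^2=-\sigma^\ast[S]+j_{1\ast}{\sigma_|}^{\ast}c_1(N_{S/Q^4})=-5H^2+(d+d')+4H_S$, with the $c_1(N_{S/Q^4})$ term entering with sign $+$, not $-$. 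Your class fails the elementary sanity check $H^2E^2=-\mathfrak{d}=-10$ from Lemma \ref{intersection theory} (it gives $H^2E^2=0$), and it gives $HE^3$ with the wrong sign. The repair you then propose --- ``identifying'' $-j_{1\ast}K_S$ with $-5H^2+d+d'+8H_S$, together with the alleged identity $j_{1\ast}H_S=4H_S$ --- cannot work: by the notational convention fixed in the paper, $j_{1\ast}{\sigma_|}^{\ast}K_S=d+d'$ and $j_{1\ast}{\sigma_|}^{\ast}H_S=H_S$, and since $j_{1\ast}{\sigma_|}^{\ast}$ is injective with image complementary to $\sigma^\ast H^4(Q^4,\ZZ)$ in the direct-sum decomposition, no identity can move a $-5H^2$ summand into the $H^2(S,\ZZ)$-part. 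The paper instead computes $c_2(X)$ by the blow-up formula for Chern classes (Fulton, Example 15.4.3) in two forms, $c_2(X)=7H^2+d+d'-E^2$ and $c_2(X)=12H^2-4H_S$ (using $S=5H^2$, $c_1(Q^4)=4H$, $c_2(Q^4)=7H^2$, $c_1(S)=-d-d'$, and $HE=H_S$), and solves for $E^2$; some argument of this kind is required, and as written your part \emph{(c)} is unproved.

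A secondary issue is the logical order in the first assertion: you derive $K_X=-H-H'$ from the cubo-cubic relation $H'=3H-E$, asserting the latter ``comes directly from the description of $\varphi$'' --- but within this section the cubo-cubic relation is a conclusion, not an input; the quoted construction supplies only the two blow-up structures. The paper obtains $K_X=-H-H'$ directly by adjunction for the zero locus of a section of $\mathcal{U}_{G(2,4)}^\ast(0,1)\oplus\ko(1,1)\oplus\mathcal{Q}_{G(2,4)}(0,1)$ on $G(2,4)\times\PP^5$, and only then deduces $H'=3H-E$ and $H=3H'-E'$ from the blow-up formula $K_X=-4H+E=-4H'+E'$; your reversal would need a citation to \cite{Fano4foldsK3type} or an independent argument to avoid circularity. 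By contrast, your parts \emph{(a)} and \emph{(b)} are fine: \emph{(a)} is the same projection-formula argument the paper draws from Fulton, Example 8.3.9, and intersecting $H'=3H-E$ with $H$ and $H'$ as you suggest actually proves \emph{(b)} more directly than the paper's detour through $K_X^2$, since both sides of \emph{(b)} are then visibly equal to $HH'$.
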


\begin{proof}
    The canonical bundle of $X$ is given by $ K_X=[K_{G(2,4) \times \PP^5}\otimes \overset{5}{\bigwedge}( \mathcal{U}_{G(2,4)}^\ast (0,1)\oplus \ko(1,1) \oplus \mathcal{Q}_{G(2,4)}(0,1))]_{|X}
    = - H - H'$. We also have $K_X=-4H +E=-4H' +E'$ by the blow-up formula for the canonical bundle. On the other hand, (\textit{a}) is easily derived from \cite[Example~8.3.9 iii)]{Fulton} and (\textit{b}) follows from a straightforward computation using (\textit{a}) and the equations $(-4H+E)(-H-H')=K_X ^2=(-4H'+E')(-H-H' )$, $H ^2=3H H' - E'H$ and ${H'} ^2=3H H' - EH'$. We prove (\textit{c}) now. We have $H ^2=P_1 +P_2$ in $CH^2(Q^4)$. Moreover, $S=5H^2=5P_1 +5P_2$ according to Theorem \ref{classif deg 10 surfaces thm}. Also, we have $c_1(Q^4)=4H$, $c_2(Q^4)=7H^2$ and $c_1(S)=-d -d '$. Hence, $c_2(X)=7H ^2 + d +d '- E ^2 = 12H ^2 -4H_S$ by \cite[Example~15.4.3]{Fulton}. From the last equality we obtain the same expression for $E ^2 $ as in the statement. By symmetry, we can derive an analogous expression for ${E'}^2$.
\end{proof} 

The morphism $\phi _A$ induced from the Hodge isometry of $H^4(X,\ZZ)$ relates geometrically the algebraic lattices $A_1$ and $A_2$ as follows. 

\begin{lem}\label{mult by 5}
$\phi_A:dA_1\rightarrow dA_2$ is given by multiplication by $5$.
\end{lem}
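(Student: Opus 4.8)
The plan is to exploit the fact that $A_1$ and $A_2$ are one and the same sublattice of the unimodular lattice $H^4(X,\ZZ)$, namely its algebraic part, so that the Hodge isometry $\phi_A\oplus\phi_T$ is induced by the identity of $H^4(X,\ZZ)$. Consequently $\phi_A\colon dA_1\to dA_2$ is the map $[\alpha]\bmod A_1\mapsto [\alpha]\bmod A_2$ on the common discriminant group, and determining it amounts to writing the preferred generator $\frac{H_S+d+d'}{12}\bmod A_1$ of $dA_1$ as a multiple of the preferred generator $\frac{H_T+\delta+\delta'}{12}\bmod A_2$ of $dA_2$. Concretely, I would look for the integer $m\in\{0,\dots,11\}$ such that $(H_S+d+d')-m(H_T+\delta+\delta')\in 12A_1$. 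Observe that the discriminant form alone forces only $m^2\equiv 1\pmod{24}$, i.e.\ $m\in\{1,5,7,11\}$, so that selecting the value $m=5$ genuinely requires the geometric relations of Proposition \ref{intersection of codim 2 cycles in k3-33}.

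First I would rewrite the $\tau$-classes in terms of the $\sigma$-classes. The cubo-cubic relations give $H'=3H-E$ and $E'=K_X+4H'=8H-3E$, while relation (\textit{a}) gives $H_S=HE$ and $H_T=H'E'$. The formula for $E'^2$ in relation (\textit{c}) yields $\delta+\delta'=E'^2+5H'^2-4H_T$, whence $H_T+\delta+\delta'=E'^2+5H'^2-3H_T$. Expanding $H'^2$, $E'^2$ and $H'E'$ and substituting $E^2=-5H^2+d+d'+4H_S$ from relation (\textit{c}), all the cross terms collapse and I expect to arrive at the clean expression
\[ H_T+\delta+\delta'=12H^2-7H_S+5(d+d') \]
written entirely in the $\sigma$-basis $\{H^2,H_S,d,d'\}$.

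With this in hand the conclusion is immediate: one computes
\[ (H_S+d+d')-5(H_T+\delta+\delta')=-60H^2+36H_S-24(d+d')=12\bigl(-5H^2+3H_S-2(d+d')\bigr), \]
which lies in $12A_1$ because $H^2=P_1+P_2\in A_1$ and $H_S=(H_S+d+d')-d-d'\in A_1$. Hence $m=5$, that is, $\phi_A$ sends the generator of $dA_1$ to five times the generator of $dA_2$, which is the asserted multiplication by $5$.

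The main obstacle is the middle step: faithfully translating the $T$-side classes $H_T$ and $\delta+\delta'$ into the $\sigma$-basis. This is exactly where Proposition \ref{intersection of codim 2 cycles in k3-33} does the heavy lifting—the cubo-cubic relations convert $H',E'$ into $H,E$, and the formula for $E'^2$ supplies the otherwise inaccessible class $\delta+\delta'$—so the computation, while a little involved, is forced once those relations are available. The only conceptual care needed is the identification of $\phi_A$ with the identity on the shared algebraic lattice, which rests on $H^4(X,\ZZ)$ being unimodular and on $A_1,A_2$ both coinciding with its full algebraic part.
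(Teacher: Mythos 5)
Your proposal is correct and follows essentially the same route as the paper: both arguments use the relations of Proposition \ref{intersection of codim 2 cycles in k3-33} (the cubo-cubic relations, $HE=H_S$, $H'E'=H_T$, and the formulas for $E^2$, ${E'}^2$) to rewrite one preferred discriminant generator in terms of the other and read off the factor $5$. The only cosmetic difference is direction: you expand $H_T+\delta+\delta'$ in the $\sigma$-basis (your identity $H_T+\delta+\delta'=12H^2-7H_S+5(d+d')$ checks out), whereas the paper expresses $H_S+d+d'$ in the $\tau$-basis as $12{H'}^2-12H_T+5(H_T+\delta+\delta')$ — the same computation mirrored, with your observation that $\phi_A$ is induced by the identity on the common algebraic sublattice being implicit in the paper's setup.
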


\begin{proof}
    We systematically use the relations described in Proposition \ref{intersection of codim 2 cycles in k3-33} above. 

In $H^4(X,\ZZ)$ we have

\begin{equation*}
    \begin{split}
        {H'} ^2&=(3H - E)^2\\
        &=9H ^2 -6H_S + E ^2\\
        &=9H ^2 -6H_S + (-5H ^2+d+d ' + 4H_S)\\
        &=4H ^2 - 2H_S +d + d '\\
        &=4H ^2 -3H_S + (H_S +d +d ')\\
        &= 9H ^2 -3H_S -5H ^2 + (H_S +d +d ')\\
        &=3(3H ^2 -H_S)-5H ^2 + (H_S +d +d ')\\
        &=3(3{H'} ^2 -H_T)-5H ^2 + (H_S +d +d ')\\
        &= 9{H'} ^2 -3H_T -5H ^2 + (H_S +d +d ')
    \end{split}
\end{equation*}

From this we get 

\begin{equation*}
    \begin{split}
        (H_S +d +d ')&=-8{H'}^2+3H_T+5H ^2\\
        &=-8{H'}^2+3H_T+5(3{H'} -E')^2\\
        &= -8{H'}^2+3H_T+5(9{H'} ^2 -6H_T + {E'} ^2)\\
        &=-8{H'}^2+3H_T+5(9{H'} ^2 -6H_T +(-5{H'}^2+\delta+\delta ' + 4H_T))\\
        &=-8{H'}^2+3H_T+5(4{H'} ^2 -2H_T + \delta+\delta ')\\
        &=-8{H'}^2+3H_T+20{H'} ^2-10H_T +5(\delta+\delta ')\\
        &=12{H'} ^2- 7H_T +5(\delta+\delta ')\\
        &=12{H'} ^2- 7H_T +5(\delta+\delta ') -5H_T+5H_T\\
        &=12{H'} ^2- 12H_T +5(H_T+\delta+\delta ')
    \end{split}
\end{equation*}

Therefore, 

\begin{equation*}
    \begin{split}
        \phi _A\left(\frac{H_S +d+d'}{12}\text{ mod }A_1\right)&=\frac{12{H'} ^2 - 12H_T + 5(H_T +\delta+\delta ')}{12}\text{ mod }A_2\\
        &= {H'} ^2 - H_T + 5\left(\frac{H_T +\delta+\delta '}{12}\right)\text{ mod }A_2\\
        &= 5\left(\frac{H_T +\delta+\delta '}{12}\right)\text{ mod }A_2. 
    \end{split}
\end{equation*}
This completes the proof of the lemma.
\end{proof} 

The main result of this section is

\begin{thm}\label{T0 is not S0}
Assume $S_0$, or equivalently, $T_0$ has Picard rank $1$. Then $S_0 \ncong T_0$.
\end{thm}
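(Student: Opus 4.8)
The plan is to argue by contradiction, combining the Torelli theorem with the discriminant computation already in hand. Suppose $S_0\cong T_0$. Since both surfaces have Picard rank $1$, any isomorphism $f:S_0\to T_0$ induces a Hodge isometry $g:H^{2}(S_0,\ZZ)\to H^{2}(T_0,\ZZ)$ carrying the ample generator $H_{S_0}$ to $H_{T_0}$ (the degrees agree, and ampleness fixes the sign). Consequently $g$ restricts to an isometry $g_{NS}:NS(S_0)\to NS(T_0)$ and to a Hodge isometry $g_T:T(S_0)\to T(T_0)$. On the rank-one lattice $NS(S_0)=\langle 12\rangle$ the only isometries are $\pm\mathrm{id}$, so $g_{NS}$ induces $\pm 1$ on $dNS(S_0)\cong\ZZ_{12}$; by the unimodularity of $H^{2}$ and the gluing $dNS\cong dT(-1)$, the map $g_T$ likewise induces $\pm 1$ on $dT(S_0)\cong\ZZ_{12}$.

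Next I would compare $g_T$ with the geometric Hodge isometry $\phi_T:T(S_0)\to T(T_0)$ coming from the Fano fourfold $X$. By Lemma \ref{mult by 5}, together with the commutative diagram of discriminant groups and the isometries $b_S^{\ast}$, $b_T^{\ast}$, the map $\phi_T$ induces multiplication by $5$ on $dT(S_0)\cong\ZZ_{12}$. Hence the composite $\psi:=g_T^{-1}\circ\phi_T$ is a Hodge auto-isometry of $T(S_0)$ which, since $5^{-1}\equiv 5\pmod{12}$, induces multiplication by $\pm 5$ on $dT(S_0)$.

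The final and decisive step is to show that every Hodge auto-isometry of $T(S_0)$ acts on $dT(S_0)$ by $\pm 1$; as $\pm 5\notin\{\pm 1\}$ in $(\ZZ/12)^{\times}$, this contradicts the existence of $\psi$, hence of $f$, and finishes the proof. To establish the claim I would use that $T(S_0)$ is an irreducible rational Hodge structure, so by Zarhin's theorem its endomorphism algebra $F$ is a field, either totally real or CM. Since $T(S_0)\otimes\QQ$ is an $F$-vector space, $[F:\QQ]$ divides $\operatorname{rk}T(S_0)=21$; as $21$ is odd it has no even divisor, which rules out the CM case, so $F$ is totally real. Any Hodge isometry lies in $F^{\times}$, and the isometry condition $\bar g\,g=1$ combined with the triviality of the canonical involution (complex conjugation) on a totally real $F$ forces $g^{2}=1$, i.e. $g=\pm 1$. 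Thus the group of Hodge auto-isometries of $T(S_0)$ is exactly $\{\pm 1\}$, which acts on the discriminant by $\pm 1$.

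The main obstacle is precisely this last step. A priori, a Hodge auto-isometry of the rank-$21$ lattice $T(S_0)$ could act on $\omega$ by a root of unity of fairly large order $m$ (only the weak constraint $\varphi(m)\le 21$ is immediate, where $\varphi$ denotes Euler's totient), so one cannot simply assert that it is $\pm\mathrm{id}$. The point that makes the argument robust for \emph{every} surface of Picard rank $1$, rather than only the very general one, is to exploit the oddness of the rank $21$ through Zarhin's structure theorem to exclude complex multiplication, thereby pinning the discriminant action down to $\pm 1$ and exposing the incompatibility with the multiplication-by-$5$ of Lemma \ref{mult by 5}.
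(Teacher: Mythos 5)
Your proposal is correct and takes essentially the same route as the paper: assume an isomorphism, split the induced Hodge isometry over $NS\oplus T$, transfer the $\pm 1$ action through Nikulin's gluing to $dT(S_0)\cong\ZZ_{12}$, and play it against the multiplication-by-$5$ of Lemma \ref{mult by 5}, which is incompatible since $\pm 5\neq\pm 1$ in $(\ZZ/12)^{\times}$. The only difference is that where you re-derive the rigidity statement (every Hodge auto-isometry of $T(S_0)$ is $\pm\mathrm{id}$) from Zarhin's theorem via the oddness of $\operatorname{rk}T(S_0)=21$, the paper simply cites \cite[Lemma~4.1]{oguiso.almost-primes} for Hodge isometries of transcendental lattices of K3 surfaces of odd Picard rank --- whose proof is precisely the totally-real-versus-CM argument you spell out.
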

\begin{proof}

    Let $\theta: S_0 \rightarrow T_0$ be an isomorphism. It induces a Hodge isometry $H^2(T_0, \ZZ)\rightarrow H^2(S_0,\ZZ)$, which can be decomposed as $ \theta _A \oplus\theta _T : NS(T_0)\oplus T(T_0)\rightarrow NS(S_0)\oplus T(S_0)$. Moreover, since these surfaces have Picard rank $1$, the degree $12$ polarizations $H_{S_0}$ and $H_{T_0}$ are interchanged by $\theta_A$.

    The unimodularity of the K3 lattice induces the following diagram of discriminant groups by \cite[Corollary~1.5.2]{nikulin}:

\begin{center}
\begin{tikzcd}
dNS(T_0) \arrow[d, "\cong"'] \arrow[r, "\theta_A"] & dNS(S_0) \arrow[d, "\cong"] \\
dT(T_0)(-1) \arrow[r, "\theta_T"]              & dT(S_0)(-1)            
\end{tikzcd}
\end{center}

    Evidently, we can identify the isometry $\theta_T :dT(T_0)(-1)\rightarrow dT(S_0)(-1)$ with the isometry $ dA_2\overset{b_{T}^{\ast}}{\leftarrow}dNS(T_0)\overset{\theta_A}{\rightarrow} dNS(S_0)\overset{b_{S}^{\ast}}{\rightarrow}dA_1$ given by $\frac{H_T +\delta +\delta'} {12}\text{ mod }A_2\mapsto \frac{H_S+d+d'}{12}\text{ mod }A_1$. We will continue to denote the latter by $\theta_A$. Now observe that the composition $\theta _T\phi _T$ gives a Hodge isometry of the transcendental lattice $T(S_0)$ of $S_0$. However, $Id$ and $-Id $ are the only Hodge isometries of transcendental lattices of algebraic K3 surfaces with odd Picard rank by \cite[Lemma~4.1]{oguiso.almost-primes}. Consequently, $ \theta _T \phi _T = \pm Id$, and this equality still holds if we view it as an automorphism of $dT(S_0)$. But then the map $\theta_A \phi_A\cong \theta _T \phi _T$ is both $\pm Id$ and multiplication by $5$ in $dA_1\cong \frac{\ZZ}{12\ZZ}$, by Lemma \ref{mult by 5}. Contradiction.
\end{proof} 
\section{Special birational transformations}\label{section special bir trans}

\subsection{Basic facts on special rational maps.}

Let $\varphi: Y \dashrightarrow Z$ be a rational map projective varieties, where we assume that $Y$ is factorial, hence normal. Then we can describe $\varphi$ by a linear system. Indeed, by factoriality there exists a unique invertible sheaf $\mathcal{L}$ that extends the pullback $({\varphi}_{|_{dom(\varphi)}}) ^{\ast}(\ko_Z (1))$ to all of $Y$. Clearly, we can also extend the sections $({\varphi}_{|_{dom(\varphi)}}) ^{\ast}(s)$, $s\in H^0(Z, \ko_Z (1))$, to sections of $\mathcal{L}$. The corresponding map $\varphi ^{\ast} : H^0(Z, \ko_Z (1))\rightarrow H^0(Y, \mathcal{L})$ is injective and its image defines a linear system in $|H^0(Y, \mathcal{L})|$ that induces $\varphi$ up to a proyectivity. The base locus scheme $\mathfrak{B}$ of $\varphi$ is defined as the scheme-theoretic intersection of any collection of divisors that make up a basis of this linear system. We say that $\varphi$ is special if $\mathfrak{B}$ is a smooth subvariety of $Y$. 

Recall that the graph $\Gamma_\varphi$ of the rational map $\varphi$ is defined as the closure of the graph of the morphism ${\varphi}_{|_{dom(\varphi)}}$ in $Y\times Z$. It has the universal property that characterizes the blow-up of $Y$ along the closed subscheme $\mathfrak{B}$ (or more precisely, along the ideal sheaf corresponding to $\mathfrak{B}$). Denote by $\sigma: Bl_{\mathfrak{B}} (Y)\rightarrow Y $ the blow-up morphism. Then there exists a unique morphism $\tau:Bl_{\mathfrak{B}} (Y)\rightarrow Z$ such that $(\Gamma_ \varphi, \pi_Y, \pi_Z )\cong ( Bl_{\mathfrak{B}} (Y), \sigma, \tau)$. In fact, $ \tau$ is induced up to a proyectivity by the complete linear system $ |\sigma^\ast \mathcal{L}\otimes \ko(-E) |$, where $E$ is the exceptional divisor of $\sigma$. We even say that the rational map $\varphi$ is resolved by the morphisms $\sigma$ and $\tau$ via the diagram $Y \xleftarrow {\sigma} Bl_{\mathfrak{B}} (Y) \xrightarrow {\tau}Z$. Moreover, if $\varphi$ is a birational map and $Z$ is factorial, then $\tau$ is the blow-up of $Z$ along the base locus scheme $\mathfrak{B}'$ of the inverse map $\varphi ^{-1}$, for the isomorphism $Y\times Z \cong Z\times Y$ restricts to the graphs $\Gamma_\varphi$ and $\Gamma_ {\varphi^{-1}}$. Furthermore, the subschemes $\mathfrak{B}$ and $\mathfrak{B}'$ have codimension at least $2$ by \cite[Ch. V, Lemma~5.1]{hartshorne}.

\subsection{Numerical constraints.}

Let $k>2$ and let $\varphi:Q^k\dashrightarrow Q^k$ be a special birational transformation whose base locus scheme is a smooth variety $\mathfrak{B}$. Set $X:=Bl_\mathfrak{B} (Q^k)$. As before, we denote by $\sigma$ the blow-up morphism $X\rightarrow Q^k$, by $E$ its exceptional divisor, and by $\tau$ the morphism extending $\varphi$ to $X$. We will also keep denoting by $H$ (resp. $H'$) the pullback $\sigma ^\ast (H)$ (resp. $\tau ^\ast (H)$) on $X$. Observe that since $\mathfrak{B}$ is smooth and $Pic(Q^k)=\ZZ H$, we can decompose the Picard group of $X$ as $Pic(X)=\ZZ H \bigoplus \ZZ E$. 

Let $\mathfrak{B}'$ be the base locus scheme of $\varphi ^{-1} $, which is not necessarily smooth, reduced, or irreducible. Then $\tau $ is the blow-up of $Q^k$ along $\mathfrak{B}'$ by our discussion above. Moreover, if $E':=\tau ^{-1 }(\mathfrak{B}')$ is the exceptional divisor of $\tau$, then \cite[Proposition~1.3]{einshepherdbarron} implies that $E'$ is an irreducible divisor. Also, we have $E'=\mathsf{b}(E'_{red})$ as effective Cartier divisors for some positive integer $\mathsf{b}$.

According to our setting, relations $H'=nH-E$ and $H=m H' - E'=mH'-bE'_{red}$ hold in $Pic(X)$ for some positive integers $n$ and $m$. Write $r:=dim(\mathfrak{B})$ and $r':=dim(\mathfrak{B}')$.

\begin{pr}\label{equations in PicX} We have $\mathsf{b}=1$. In other words, $E'$ is reduced. Moreover, we have
\begin{enumerate}
    \item If $\Omega=Q^k -Sing(\mathfrak{B}'_{red})$, then $\mathfrak{B}'\cap \Omega=\mathfrak{B}'_{red} \cap \Omega$, i.e., $\mathfrak{B}'$ is generically reduced.
    \item $Pic(X)=\ZZ H \oplus \ZZ E=\ZZ H' \oplus \ZZ E'$.
    \item $K_X= -kH+(k-r-1)E= -kH'+(k-r'-1)E'$ in $Pic(X)$.
    \item $E=(mn-1)H'-nE'$ and $E'=(mn-1)H-mE$ in $Pic(X)$.
\end{enumerate} 
\end{pr}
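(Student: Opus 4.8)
The plan is to split the statement into a purely lattice-theoretic core, which pins down $\mathsf{b}$ together with (b) and (d) by elimination in $Pic(X)$, and a local geometric part, which yields (a) and the second equality of (c) by working at the generic point of $\mathfrak{B}'$. To begin, I would record what the blow-up $\sigma$ supplies for free: since $Q^k\subseteq\PP^{k+1}$ is a smooth quadric we have $K_{Q^k}=-kH$, and since $\mathfrak{B}$ is smooth of codimension $k-r$ the blow-up formula gives $K_X=\sigma^\ast K_{Q^k}+(k-r-1)E=-kH+(k-r-1)E$, together with $Pic(X)=\ZZ H\oplus\ZZ E$; this is already the first equality of (c) and the first description in (b).

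Next I would prove $\mathsf{b}=1$. Writing $[E'_{red}]=\alpha H+\beta E$ with $\alpha,\beta\in\ZZ$ and substituting $H'=nH-E$ into $H=mH'-\mathsf{b}E'_{red}$, comparison of coefficients in the basis $\{H,E\}$ gives $\mathsf{b}\alpha=mn-1$ and $\mathsf{b}\beta=-m$. Hence $\mathsf{b}$ divides both $mn-1$ and $m$; as $\gcd(mn-1,m)=\gcd(-1,m)=1$, we conclude $\mathsf{b}=1$, so $E'=E'_{red}$ is reduced. The same elimination yields $E'=(mn-1)H-mE$, and feeding $H=mH'-E'$ into $E=nH-H'$ gives $E=(mn-1)H'-nE'$, which is (d). For the remaining claim of (b), the change-of-basis matrix from $\{H,E\}$ to $\{H',E'\}$ is $\left(\begin{smallmatrix}n&-1\\ mn-1&-m\end{smallmatrix}\right)$, of determinant $-1$; being unimodular, $\{H',E'\}$ is again a $\ZZ$-basis of $Pic(X)$.

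It remains to establish (a) and the second equality of (c), and for this I would localize at the generic point $\eta$ of the irreducible support $\mathfrak{B}'_{red}$, where $A:=\mathcal{O}_{Q^k,\eta}$ is regular local of dimension $c=k-r'$ and $I:=\mathcal{I}_{\mathfrak{B}',\eta}$ is $\mathfrak{m}$-primary. Because $X$ and $Q^k$ are smooth and $\tau$ is birational with the single irreducible exceptional divisor $E'$, we may write $K_X=\tau^\ast K_{Q^k}+aE'=-kH'+aE'$ for the discrepancy $a\geq 1$. Since $E'$ is reduced, the divisorial valuation $v=\mathrm{ord}_{E'}$ attached to it satisfies $v(I)=\mathsf{b}=1$, and together with the structure of the general fibre of $\tau|_{E'}$ this should identify $\tau$ near $\eta$ with the ordinary blow-up of a smooth center of codimension $c$. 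Consequently $I=\mathfrak{m}$ at $\eta$, which is exactly the generic reducedness (a) (so that $\mathfrak{B}'$ agrees with $\mathfrak{B}'_{red}$ on $\Omega=Q^k-Sing(\mathfrak{B}'_{red})$), and the discrepancy becomes $a=c-1=k-r'-1$. Substituting into $K_X=-kH'+aE'$ gives the second equality of (c).

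The main obstacle I anticipate is precisely this last passage: turning reducedness of the Cartier divisor $E'$ (equivalently $\mathsf{b}=1$) into reducedness of the a priori non-reduced base scheme $\mathfrak{B}'$ at the generic point, and then reading off the discrepancy as $k-r'-1$. Everything lattice-theoretic is formal, but making rigorous that a reduced irreducible exceptional divisor over a smooth target forces the center to be generically a smooth subvariety — so that $\tau$ is locally a genuine blow-up with discrepancy equal to codimension minus one — is the delicate commutative-algebra and birational-geometry point on which (a) and the second half of (c) rest.
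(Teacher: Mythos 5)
Your lattice-theoretic work is correct and is, in substance, the paper's own argument. The paper proves $\mathsf{b}=1$ by intersecting $\mathsf{b}E'_{red}=mH'-H=(mn-1)H-mE$ with the class of a line $l$ and with a fiber $f$ of $E\rightarrow\mathfrak{B}$, obtaining $\mathsf{b}\mid mn-1$ and $\mathsf{b}\mid m$; reading off the coordinates of $E'_{red}$ in the basis $\{H,E\}$ of $Pic(X)$, as you do, is the same computation in different packaging. Your explicit derivations of (\textit{b}) and (\textit{d}) (elimination, plus the determinant $-1$ change of basis) and of the first equality in (\textit{c}) (smooth blow-up formula together with $K_{Q^k}=-kH$) are complete and correct; for these the paper simply says that ``the same kind of arguments used to prove \cite[Proposition~2.1]{einshepherdbarron}'' apply, so you have in fact written out more than the paper does.

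The genuine gap --- which, to your credit, you flag yourself --- is (\textit{a}), and with it the second equality of (\textit{c}). The bridge you propose, namely that $v(I)=\mathsf{b}=1$ together with the structure of the general fibre ``should identify $\tau$ near $\eta$ with the ordinary blow-up of a smooth center,'' is precisely the statement requiring proof, and the evidence you cite does not suffice: $\mathrm{ord}_{E'}(I)=1$ only says $I\nsubseteq\mathfrak{m}^2$ at $\eta$, i.e.\ that $I$ contains an element of order one, which is far from $I=\mathfrak{m}$. For instance, $(x,y^2)$ in a two-dimensional regular local ring contains an order-one element and admits a divisorial valuation taking value $1$ on it, yet defines a non-reduced scheme; what saves the day in your situation is that the relevant valuation is attached to the \emph{entire} reduced Cartier exceptional divisor of the blow-up of $I$ itself, with smooth total space, and exploiting this (say through the Rees-algebra description of $E'$, or through the identification of $\tau_{\ast}\ko_X(-E')$ with the integral closure of $\mathcal{I}_{\mathfrak{B}'}$ followed by a descent from the integral closure to the ideal, or through the equality case of the inequality $a(E')\geq \mathrm{codim}\,c(E')-1$ for divisors over a smooth variety) is real work, not a formality. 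It is exactly the content of \cite[Proposition~2.1]{einshepherdbarron} that the paper imports wholesale. On the positive side, once (\textit{a}) is granted your route to (\textit{c}) does close: in characteristic zero $\mathfrak{B}'_{red}$ is generically smooth, over the open set where $\mathfrak{B}'=\mathfrak{B}'_{red}$ is smooth $\tau$ is the ordinary blow-up, and since the generic point of the irreducible divisor $E'$ lies over that open set, the coefficient of $E'$ in $K_X-\tau^{\ast}K_{Q^k}$ is $k-r'-1$. So the single missing lemma is (\textit{a}); to turn your proposal into a proof you would need to reproduce the Ein--Shepherd-Barron argument rather than gesture at it.
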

\begin{proof}
    We have $\mathsf{b}E'_{red}=mH'-H=(mn-1)H-mE$. If $l\subseteq Q^k$ is a line and $f$ is a fiber of $E\rightarrow \mathfrak{B}$, then $\mathsf{b}\vert ((mn-1)H-mE)l =mn-1$ and $\mathsf{b}\vert ((mn-1)H-mE)f=m$. Thus, $\mathsf{b}=1$ follows. Finally, observe that the same kind of arguments used to prove \cite[Proposition~2.1]{einshepherdbarron} also prove (\textit{a}), (\textit{b}), (\textit{c}), and (\textit{d}).
\end{proof}

\begin{cor}\label{formulas for n,m,r,r'}
    We have $n=\frac{r'+1}{k-r-1}$ and $m=\frac{r+1}{k-r'-1}$, or equivalently, $r=(k-r'-1)m-1$ and $r'=(k-r-1)n-1$.
\end{cor}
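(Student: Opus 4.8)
The plan is to extract the formulas purely from the two expressions for the canonical class $K_X$ recorded in Proposition \ref{equations in PicX}(\textit{c}), by rewriting both of them in a single basis of $Pic(X)$ and then comparing coefficients.

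First I would invoke Proposition \ref{equations in PicX}(\textit{b}), which guarantees that $Pic(X)=\ZZ H\oplus \ZZ E$ is free of rank two. This is the key structural input: it means that an equality of divisor classes is equivalent to the equality of their $H$- and $E$-coefficients, so that the comparison I am about to perform is legitimate. Next I would use the relation $H'=nH-E$ together with Proposition \ref{equations in PicX}(\textit{d}), namely $E'=(mn-1)H-mE$, to express the second formula $K_X=-kH'+(k-r'-1)E'$ entirely in terms of $H$ and $E$. A direct substitution gives
\[K_X=\bigl[-kn+(k-r'-1)(mn-1)\bigr]H+\bigl[k-(k-r'-1)m\bigr]E.\]
Setting this equal to the first formula $K_X=-kH+(k-r-1)E$ yields two scalar identities, one from the coefficient of $E$ and one from the coefficient of $H$.

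Reading off the coefficient of $E$ gives $k-(k-r'-1)m=k-r-1$, hence $(k-r'-1)m=r+1$, that is $m=\frac{r+1}{k-r'-1}$. Substituting this back into the $H$-coefficient identity and simplifying produces $n=\frac{r'+1}{k-r-1}$; alternatively this follows at once from the symmetry interchanging the roles of $\sigma$ and $\tau$, i.e. $H\leftrightarrow H'$, $E\leftrightarrow E'$, $r\leftrightarrow r'$, and $n\leftrightarrow m$. The two ``equivalent'' reformulations $r=(k-r'-1)m-1$ and $r'=(k-r-1)n-1$ are then immediate algebraic rearrangements.

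I do not expect a genuine obstacle here: the argument is a routine computation in the rank-two lattice $Pic(X)$, driven entirely by the relations already established in Proposition \ref{equations in PicX}. The only points requiring care are tracking signs through the substitution and making sure every class is written in the common basis $\{H,E\}$ before coefficients are equated; the freeness from part (\textit{b}) is what turns the resulting coefficient equalities into valid scalar equations.
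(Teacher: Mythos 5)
Your proposal is correct and follows essentially the same route as the paper: both substitute $H'=nH-E$ and $E'=(mn-1)H-mE$ from Proposition \ref{equations in PicX}(\textit{d}) into the two expressions for $K_X$ in part (\textit{c}) and compare coefficients in the basis $\{H,E\}$ of $Pic(X)=\ZZ H\oplus\ZZ E$. Your explicit appeal to the freeness of $Pic(X)$ from part (\textit{b}) is a point the paper leaves implicit, and your computation reproduces the paper's identities $(k-r'-1)m=r+1$ and $r'=(k-r-1)n-1$ exactly.
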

\begin{proof}
    By Proposition \ref{equations in PicX} we have $-kH+(k-r-1)E=-kH'+(k-r'-1)E'=-k(nH-E)+(k-r'-1)((mn-1)H-mE)$, or equivalently, $-kH-(r+1)E=-knH+(k-r'-1)((mn-1)H-mE)=(-kn+(k-r'-1)(mn-1))H-(k-r'-1)mE=(-kn+(k-r'-1)mn-(k-r'-1))H-(k-r'-1)mE$. In particular, $(k-r'-1)m=r+1$ and $-k=-kn+(k-r'-1)mn-(k-r'-1)=-kn+(r+1)n-(k-r'-1)=-(k-r-1)n-(k-r'-1)$, i.e., $r'=(k-r-1)n-1$.
\end{proof}

\begin{pr}\label{values of n,m,r,r'}
    If $k=3$, then $(n,m)=(2,2)$, i.e., $\varphi$ is a quadro-quadric transformation of $Q^3$, and $(r,r')=(1,1)$; if $k=4$, then $(n,m)=(3,3)$, i.e., $\varphi$ is a cubo-cubic transformation of $Q^4$, and $(r,r')=(2,2)$.
\end{pr}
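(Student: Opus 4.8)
The plan is to treat this as a finite Diophantine problem governed entirely by the identities of Corollary \ref{formulas for n,m,r,r'}, together with one geometric input that forbids the degree from being $1$. First I would record the integrality relations $r'+1=(k-r-1)n$ and $r+1=(k-r'-1)m$, supplemented by the constraints coming from the setup: since $\mathfrak{B}$ and $\mathfrak{B}'$ have codimension at least $2$, we have $1\le k-r-1$ and $1\le k-r'-1$, while $n,m\ge 1$ and $0\le r,r'\le k-2$. To make the bookkeeping symmetric I would substitute $a:=k-r-1\ge 1$ and $a':=k-r'-1\ge 1$, which, upon eliminating $r$ and $r'$, turns the two relations into the clean pair $an+a'=k$ and $a'm+a=k$.

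The one non-numerical ingredient is the bound $mn\ge 2$, which excludes the degenerate branch where $\varphi$ or $\varphi^{-1}$ would be a mere linear projection. For this I would invoke Proposition \ref{equations in PicX}(\textit{d}), which gives $E'=(mn-1)H-mE$ in $Pic(X)=\ZZ H\oplus\ZZ E$. If $mn=1$, then necessarily $m=n=1$, and this class collapses to $E'=-E$; this is impossible, since $E'$ is a nonzero effective (indeed reduced irreducible) divisor, whereas $-E$ is the negative of a nonzero effective class and hence not effective. Therefore $mn\ge 2$.

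With $mn\ge 2$ in hand I would conclude by exhausting the finitely many solutions of $an+a'=k$ with $a,a'\ge 1$ and $n\ge 1$, and for each checking whether the companion equation $a'm+a=k$ admits an integer $m\ge 1$ with $mn\ge 2$. For $k=3$ the factorizations of $an+a'=3$ leave only $a=a'=1$, $n=m=2$, which yields $r=r'=k-2=1$; every other candidate forces $m=n=1$ and is discarded. For $k=4$ the analysis of $an+a'=4$ leaves only $a=a'=1$, $n=m=3$, giving $r=r'=k-2=2$; the remaining candidates either fail integrality of $m$ or again force $mn=1$. This produces exactly the two asserted solutions.

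I expect the crux to be isolating and justifying the inequality $mn\ge 2$ rather than the arithmetic itself. The finite enumeration is routine, but arithmetic alone genuinely admits the spurious solutions with $m=n=1$; excluding them is not a divisibility phenomenon but relies on the effectivity of the exceptional divisor $E'$ via Proposition \ref{equations in PicX}(\textit{d}). Once that single geometric fact is recorded, the classification of $(n,m,r,r')$ in each dimension follows mechanically.
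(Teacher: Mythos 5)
Your proof is correct, and its skeleton is exactly the paper's: the paper's own proof is a one-liner deducing everything from Corollary \ref{formulas for n,m,r,r'} together with the codimension-$\geq 2$ bounds, and your substitution $a=k-r-1$, $a'=k-r'-1$ turning the relations into $an+a'=k$ and $a'm+a=k$ is just a tidy repackaging of that same finite enumeration. The genuine added value of your write-up is that you make explicit a step the paper leaves tacit: as you observe, the arithmetic alone does \emph{not} pin down the answer, since for $k=3$ the pairs $(r,r')=(1,0),(0,1)$ and for $k=4$ the pairs $(r,r')=(2,0),(1,1),(0,2)$ all satisfy the integrality relations with $(n,m)=(1,1)$. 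Your exclusion of the branch $mn=1$ via Proposition \ref{equations in PicX}(\textit{d}) --- $E'=(mn-1)H-mE=-E$ would force the nonzero effective divisors $E$ and $E'$ to sum to zero, impossible on a projective variety --- is sound, granted the standing assumption that both base loci are nonempty (which the setup presupposes via $Pic(X)=\ZZ H\oplus\ZZ E=\ZZ H'\oplus\ZZ E'$). An alternative, equally short exclusion in the spirit of the paper: if $n=1$ then $\varphi$ is given by $k+2$ independent sections of $\mathcal{I}_{\mathfrak{B}/Q^k}(1)$ (needed for the image quadric to be nondegenerate in $\PP^{k+1}$), forcing $h^0(\mathcal{I}_{\mathfrak{B}/Q^k}(1))=h^0(\ko_{Q^k}(1))$ and hence $\mathfrak{B}=\emptyset$, a contradiction; this is the same section-counting argument the paper runs later in Proposition \ref{Sisnotinaquadraticcomplex}, and symmetry handles $m=1$. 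Either way, your identification of $mn\geq 2$ as the one non-arithmetic ingredient is accurate, and your enumeration for $k=3,4$ matches the asserted solutions exactly.
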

\begin{proof}
    This follows from Corollary \ref{formulas for n,m,r,r'} and the fact that $\mathfrak{B}$ and $\mathfrak{B}'$ have codimension at least $2$ in $Q^k$.
\end{proof}

In the following, $\mathfrak{B}$ will be denoted by $C$ if $k=3$, and by $S$ if $k=4$. Let $\mathsf{d}$ be the degree of $C$ and $\mathsf{g}$ its genus, and let $\mathfrak{d}$ be the degree of $S$ and $(\mathfrak{a},\mathfrak{b})$ its bidegree.

\begin{lem} \label{intersection theory} We list the intersection numbers $H^iE^{k-i}$, where $k=3,4$.\\
If $k=3$, then 
\begin{enumerate}
    \item $H^3=2$.
    \item $H^2 E=0$.
    \item $H E^2=-\mathsf{d}$.
    \item $E^3= -2\mathsf{g}+2-3\mathsf{d}$.
\end{enumerate}
If $k=4$, then 
\begin{enumerate}
    \item $H^4=2$.
    \item $H^3 E=0$.
    \item $H^2 E^2=-\mathfrak{d}$.
    \item $H E^3= -H_S K_S -4\mathfrak{d}$.
    \item $E^4=-9\mathfrak{d} -c_2 (S)-4H_S K_S$.
\end{enumerate}
\end{lem}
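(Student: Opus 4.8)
The plan is to compute the intersection numbers $H^i E^{k-i}$ on $X = Bl_{\mathfrak{B}}(Q^k)$ by reducing everything to invariants living on $Q^k$ or on the base locus $\mathfrak{B}$, using the standard self-intersection formula for the exceptional divisor of a blow-up together with the projection formula. Since $H = \sigma^\ast(H)$ is pulled back from $Q^k$, the projection formula immediately kills any product containing enough powers of $H$: in particular $H^k = \sigma^\ast(H^k) = \deg Q^k = 2$ gives the first entry in each list, and $H^{k-1}E = \sigma^\ast(H^{k-1})\cdot E = 0$ since $\sigma^\ast(H^{k-1})$ is (up to rational equivalence) represented by a curve meeting the exceptional locus in dimension too small to contribute, or more cleanly because $\sigma_\ast E = 0$ as a cycle of the wrong dimension. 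So the first two entries are essentially formal.

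The substantive computations are the terms with two or more factors of $E$. For these I would use the description of the Chow/cohomology ring of a blow-up: letting $j\colon E \hookrightarrow X$ be the inclusion and $\sigma_|\colon E \to \mathfrak{B}$ the projectivized normal bundle map, one has $E\vert_E = j^\ast E = -\zeta$, where $\zeta = c_1(\ko_E(1))$ is the relative hyperplane class on $E = \PP(N_{\mathfrak{B}/Q^k})$, and the self-intersections $E^m$ push down via $\sigma_{|\ast}$ to Segre classes of the normal bundle $N_{\mathfrak{B}/Q^k}$. Concretely, for $k=3$ and $\mathfrak{B}=C$ a curve, $HE^2$ computes $-\deg N_{C/Q^3}$ pulled back against $H$, which equals $-\mathsf{d}$ once one identifies $\deg(H_C) = \mathsf{d}$; and $E^3$ is the top Segre number, expressible through $\deg N_{C/Q^3} = \deg K_C - \deg K_{Q^3}\vert_C$, i.e. through the genus and degree, yielding $-2\mathsf{g}+2-3\mathsf{d}$ after substituting $K_{Q^3} = -3H$. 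For $k=4$ with $\mathfrak{B}=S$ a surface, the three nontrivial numbers $H^2E^2$, $HE^3$, $E^4$ are governed by $c_1(N_{S/Q^4})$ and $c_2(N_{S/Q^4})$, which is exactly where Proposition \ref{total chern class of normal bundle of S} enters: $c_1(N_{S/Q^4}) = K_S + 4H_S$ and $c_2(N_{S/Q^4}) = 7\mathfrak{d} + 4H_S K_S - c_2(S) + K_S^2$.

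The mechanism I expect to use is the standard blow-up self-intersection formula, which in this setting reads $E^{i}\cdot H^{4-i} = (-1)^{i-1}\sigma_{|\ast}\!\left(\zeta^{\,i-1}\cdot (\sigma_|^\ast H_S)^{\,4-i}\right)$ re-expressed via the relation $\zeta^{c} + \sigma_|^\ast c_1(N)\,\zeta^{c-1} + \sigma_|^\ast c_2(N)\,\zeta^{c-2} = 0$ coming from the defining equation of the projective bundle (here $c = \operatorname{codim}\mathfrak{B} - 1$). For $k=4$ this says $H^2E^2 = -H_S^2 = -\mathfrak{d}$, then $HE^3 = H_S\cdot c_1(N_{S/Q^4}) = H_S(K_S+4H_S) = H_SK_S + 4\mathfrak{d}$ up to the governing sign, giving $-H_SK_S - 4\mathfrak{d}$, and finally $E^4 = -\big(c_1(N)^2 - c_2(N)\big)$ evaluated on $S$, which after expanding $c_1(N)^2 = (K_S+4H_S)^2 = K_S^2 + 8H_SK_S + 16\mathfrak{d}$ and subtracting $c_2(N) = 7\mathfrak{d}+4H_SK_S - c_2(S)+K_S^2$ collapses to $-9\mathfrak{d} - c_2(S) - 4H_SK_S$, matching the stated formula.

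The main obstacle is bookkeeping the signs and the precise form of the pushforward identity, rather than any conceptual difficulty: one must be careful that $\sigma_{|\ast}(\zeta^{c-1}) = 1$ (the Segre-class normalization), $\sigma_{|\ast}(\zeta^{c}) = -c_1(N_{\mathfrak{B}/Q^k})$, $\sigma_{|\ast}(\zeta^{c+1}) = c_1(N)^2 - c_2(N)$, and so on, and that these combine with the correct powers of $H_S$ restricted to $\mathfrak{B}$. I would verify the signs on the lowest case ($k=3$, where the arithmetic is transparent) and then transport the same formalism verbatim to $k=4$, at each stage substituting Proposition \ref{total chern class of normal bundle of S} for the Chern classes of the normal bundle and using $H_S^2 = \mathfrak{d}$. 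The cleanest write-up treats $H^iE^{k-i}$ uniformly via $E\vert_E = -\zeta$ and the relation on $\zeta$, so that each listed value is read off as a single Segre-class evaluation with the Chern data plugged in at the end.
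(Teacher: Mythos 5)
Your proposal is correct: all five values for $k=4$ and all four for $k=3$ come out right under your sign conventions, and I checked the three substantive evaluations ($H^2E^2=-H_S^2$, $HE^3=\sigma_{|\ast}(\zeta^2)\cdot H_S=-c_1(N_{S/Q^4})H_S$, $E^4=-(c_1^2-c_2)$, and the curve analogues). Note, however, that the paper does not prove this lemma at all: its ``proof'' is a citation to the Encyclopaedia volume for $k=3$ and to Prokhorov--Zaidenberg for $k=4$. So your argument is not the paper's argument but the self-contained standard computation that those references carry out; what it buys is a uniform treatment of both cases via $E\vert_E=-\zeta$, the Grothendieck relation on $E=\PP(N_{\mathfrak{B}/Q^k})$, and Segre-class pushforwards, making transparent exactly where Proposition \ref{total chern class of normal bundle of S} enters. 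Two small slips to repair in a careful write-up, neither of which infects your actual evaluations: first, in your parenthetical the integer $c$ must be the rank of the normal bundle, $c=\operatorname{codim}\mathfrak{B}=2$, not $\operatorname{codim}\mathfrak{B}-1$ --- your normalizations $\sigma_{|\ast}(\zeta^{c-1})=1$, $\sigma_{|\ast}(\zeta^{c})=-c_1(N)$, $\sigma_{|\ast}(\zeta^{c+1})=c_1(N)^2-c_2(N)$ are the correct ones precisely for $c=2$, and all your computations used them consistently; second, the sentence asserting that $HE^2$ ``computes $-\deg N_{C/Q^3}$'' is garbled, since $HE^2=-\int_C H_C=-\mathsf{d}$ involves only the hyperplane degree, the normal bundle entering first at $E^3=-\deg c_1(N_{C/Q^3})=2-2\mathsf{g}-3\mathsf{d}$; your uniform pushforward formula later in the proposal states this correctly.
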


\begin{proof}
    See \cite[Lemma~2.2.14 $(ii)$ $b)$]{AGV.Fano} for the case $k=3$, and \cite[Lemma~2.3 $(ii)$]{prokhorov.zaidenberg-examples} for the case $k=4$.
\end{proof}

\begin{pr}\label{equationsofS} If $k=3$, then $C$ is a smooth rational quartic curve; if $k=4$, then the degree $\mathfrak{d}$ of the surface $S$ is less than $18$. Moreover, the following equations hold:
\begin{enumerate}
    \item $H_S K_S =5\mathfrak{d}-48$.
    \item $c_2 (S) = 25\mathfrak{d} -224$.
    \item $\pi=3\mathfrak{d} -23$.
    \item $12 \chi (\ko_S)= K_S ^2 +c_ 2(S)= \mathfrak{d} ^2 +23\mathfrak{d} -2\mathfrak{a}\mathfrak{b} -256$.
\end{enumerate}
\end{pr}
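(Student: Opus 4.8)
The plan is to exploit the cubo-cubic structure established in Proposition \ref{values of n,m,r,r'} together with the intersection numbers of Lemma \ref{intersection theory} and the numerical constraints coming from $K_X$. Since $(n,m)=(3,3)$ and $(r,r')=(2,2)$ for $k=4$, I first feed these into Proposition \ref{equations in PicX}(\textit{d}) to record the relations $E=8H'-3E'$ and $E'=8H-3E$ in $Pic(X)$, and into Proposition \ref{equations in PicX}(\textit{c}) to get $K_X=-4H+E=-4H'+E'$. The cubo-cubic relations $H'=3H-E$ and $H=3H'-E'$ then hold, exactly as in Proposition \ref{intersection of codim 2 cycles in k3-33}. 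The strategy is to manufacture several independent numerical identities by intersecting well-chosen degree-$4$ monomials in $H$ and $E$, evaluating them once directly via Lemma \ref{intersection theory} and once after substituting the cubo-cubic relation, and then solving the resulting linear system for the invariants $H_SK_S$, $c_2(S)$, $\pi$, and $\chi(\ko_S)$.

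Concretely, I would begin with the self-intersection ${H'}^4=2$, which by symmetry must equal $H^4=2$; expanding ${H'}^4=(3H-E)^4$ via the binomial theorem and plugging in the five intersection numbers from Lemma \ref{intersection theory} yields one equation relating $\mathfrak{d}$, $H_SK_S$, and $c_2(S)$. Next I would use the mixed intersection ${H'}^3H=(3H-E)^3 H$: since $H'$ is the pullback of the hyperplane class on the target $Q^4$ and $H$ is a hyperplane pullback, this quantity has a forced value by the projection/degree considerations (it should equal $H^3H'$ computed symmetrically, or be read off from the fact that $H'^3E'=0$ in analogy with $H^3E=0$). A third relation comes from ${H'}^2H^2=(3H-E)^2H^2$, and a fourth from the symmetric relation $H^3E'=0$, i.e. $H^3(8H-3E)=0$, which forces $H^4$ and $H^3E$ to be consistent and gives no new data but confirms the setup. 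The cleanest independent inputs are the Euler-characteristic-type relation and the condition that $E'=8H-3E$ be an honest effective divisor of the correct degree on $X$; intersecting $E'$ with $H^3$, $H^2E$, etc., pins down the remaining unknowns.

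Once the linear system is assembled, I would solve for the four invariants. The self-intersection $K_X^2H^2$ or $K_X^4$ computed two ways (using $K_X=-4H+E$ and $K_X=-H-H'$) provides the crucial closing equation. I expect equations (\textit{a})--(\textit{c}) to drop out as \emph{linear} consequences expressing $H_SK_S$, $c_2(S)$, and $\pi$ affinely in $\mathfrak{d}$ (via $2\pi-2=\mathfrak{d}+H_SK_S$ and the Chern-class bookkeeping of Proposition \ref{total chern class of normal bundle of S}), while equation (\textit{d}) is Noether's formula $12\chi(\ko_S)=K_S^2+c_2(S)$ combined with Corollary \ref{square of K_S}, which supplies $K_S^2=c_2(S)-4H_SK_S-7\mathfrak{d}+\mathfrak{d}^2-2\mathfrak{a}\mathfrak{b}$; substituting (\textit{a}) and (\textit{b}) then yields the stated quadratic in $\mathfrak{d}$ with the $-2\mathfrak{a}\mathfrak{b}$ correction.

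The bound $\mathfrak{d}<18$ is the step I expect to be the genuine obstacle, since it is an \emph{inequality} rather than an algebraic identity and cannot come from intersection theory alone. The natural route is to use that $E'=8H-3E$ must be effective and that the target base locus $S'=T$ is a genuine surface, so that intersections like $H^2E'^2=-\mathfrak{d}'$ and $HE'^3$ must satisfy the same shape of formulas as for $S$; positivity of $\mathfrak{d}'\geq 0$ together with the symmetric relations then constrains $\mathfrak{d}$ from above. Alternatively, one invokes that $S$ is a smooth surface in $Q^4$ and applies the genus/degree bounds of Theorem \ref{maximalsectionalgenus}: combining $\pi=3\mathfrak{d}-23$ with $\pi\leq\pi_1(\mathfrak{d})=O(\mathfrak{d}^2)$ does not directly bound $\mathfrak{d}$ above, so the operative constraint must instead be that the \emph{inverse} invariants ($\mathfrak{d}'$, its sectional genus, its $\chi(\ko_{T})$) are likewise non-negative and integral. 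The cleanest argument is probably that $c_2(S)=25\mathfrak{d}-224$ and $\chi(\ko_S)\geq$ the appropriate minimum force, together with the analogous inequalities for $T$, a finite window; I would pin down $\mathfrak{d}<18$ by requiring that both $S$ and its partner $T$ have non-negative geometric invariants and that $H^2E^2=-\mathfrak{d}$ stays compatible with $E'$ effective, and expect the precise threshold to emerge from demanding $\chi(\ko_{T})\geq 0$ or $\mathfrak{d}'\geq 0$ after translating via the symmetric cubo-cubic relations.
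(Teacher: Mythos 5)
Your derivation of equations (\textit{a})--(\textit{d}) is essentially the paper's own argument: the two operative inputs are ${H'}^4=(3H-E)^4=2$ (the target is a quadric) and $H{H'}^3=H(3H-E)^3=6$, expanded via Lemma \ref{intersection theory}; these yield $H_SK_S=5\mathfrak{d}-48$ and $c_2(S)=25\mathfrak{d}-224$, after which (\textit{c}) is adjunction $2\pi-2=\mathfrak{d}+H_SK_S$ and (\textit{d}) is Noether's formula plus Corollary \ref{square of K_S}, exactly as in the paper. Your justification of the second input via $H=3H'-E'$ and ${H'}^3E'=0$ (valid because $\dim\tau(E')=r'=2<3$, so $\tau_\ast[E']=0$) is a legitimate algebraic substitute for the paper's geometric phrasing, which reads $H{H'}^3=2m=6$ off as the degree of the $\sigma$-image of the strict transform of a general conic section of the target. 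But note three slips. First, your ``fourth relation $H^3E'=0$'' is false: $H^3E'=H^3(8H-3E)=16$; the correct symmetric vanishing is ${H'}^3E'=0$, which you had already used. Second, computing $K_X^4$ ``two ways'' closes nothing: $-H-H'=-H-(3H-E)=-4H+E$ identically in $Pic(X)$, so the comparison is a tautology. Third, you never address the $k=3$ half of the statement; in the paper it follows from the same two equations one dimension down, $2=(2H-E)^3=14-3\mathsf{d}+2\mathsf{g}$ and $4=H(2H-E)^2=8-\mathsf{d}$, forcing $\mathsf{d}=4$, $\mathsf{g}=0$.

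The genuine gap is exactly where you suspected: the bound $\mathfrak{d}<18$, and none of your proposed mechanisms can deliver it. Your relation $H^2{E'}^2=-\mathfrak{d}'$ mixes primed and unprimed classes and is wrong; the correct symmetric relation is ${H'}^2{E'}^2=-\mathfrak{d}'$, and expanding $(3H-E)^2(8H-3E)^2$ with Lemma \ref{intersection theory} and your own (\textit{a}), (\textit{b}) gives ${H'}^2{E'}^2=-\mathfrak{d}$, i.e.\ $\mathfrak{d}'=\mathfrak{d}$: positivity of $\mathfrak{d}'$ is automatic and bounds nothing. For the same reason, all numerical invariants of $T$ mirror those of $S$ under the symmetry $H\leftrightarrow H'$, $E\leftrightarrow E'$, so demanding $\chi(\ko_T)\geq 0$ or the like only reproduces constraints you already have. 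The missing idea is one you in fact wrote down without exploiting: $H^2{H'}^2=H^2(3H-E)^2=18-\mathfrak{d}$ is not just ``a third linear relation'' but the degree of an honest irreducible surface in the source $Q^4$, namely the $\sigma$-image of the strict transform under $\tau$ of a general complete intersection of two hyperplane sections of the target quadric; hence $18-\mathfrak{d}>0$. That single positivity statement is the paper's entire argument for the bound. (Alternatively, since $H$ and $H'$ are nef with $H^3H'=H{H'}^3=6$, Khovanskii--Teissier log-concavity gives $(H^2{H'}^2)^2\geq (H^3H')(H{H'}^3)=36$, so $H^2{H'}^2\geq 6>0$; but some such positivity input beyond pure bookkeeping is indispensable, and your linear-system-solving framework alone cannot produce an inequality.)
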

\begin{proof}
    Let $k$ be arbitrary. We have $2=H'^k=(nH-E)^k$ since $Q^k$ has degree $2$. Now let us consider a general complete intersection of $k-1$ hyperplane sections of $Q^k$. This is a smooth conic curve, and its strict transform with respect to $\tau$ is mapped onto a curve contained in the source $Q^k$ via $\sigma$. The degree of this curve is $2m =(m H' - E')(H')^{k-1}=H(H')^{k-1} =H(nH-E)^{k-1} $. In the following, we will use Proposition \ref{values of n,m,r,r'} and Lemma \ref{intersection theory}. Assume $k=3$. Then $2=(2H-E)^3=14-3\mathsf{d} +2\mathsf{g}$ and $4=H(2H-E)^2=8-\mathsf{d}$. It follows that $\mathsf{d} =4$ and $\mathsf{g}=0$, and thus $C$ is a rational quartic curve. Now assume $k=4$. We consider a general complete intersection of two hyperplane sections of $Q^4$. This is a smooth quadric surface in $Q^4$, and its strict transform with respect to $\tau$ is mapped onto a surface contained in the source $Q^4$ via $\sigma$. The degree of this surface is $H^2 H'^2=H^2(3H-E)^2=2(3)^2 -\mathfrak{d}  >0$, and thus $\mathfrak{d} <18$. Items (\textit{a}) and (\textit{b}) follow from the equations $2=(3H-E)^4$ and $6=H(3H-E)^3$. To prove (\textit{c}), we use the adjunction formula $2\pi -2 = H_S K_S + H_ S^2$ together with (\textit{a}). Finally, (\textit{d}) follows from Noether's formula and Corollary \ref{square of K_S}.
\end{proof}

We want to understand the geometry of the curve $C$ and the surface $S$, and ultimately provide a model for them. The following proposition gives indeed a model for $C$ and also basic qualitative information about the embedding of the surface $S$ into $Q^4$.

\begin{pr} \label{Sisnotinaquadraticcomplex} If $k=3$, then $H ^0(Q^3, \mathcal{I}_{C/Q^ 3} (2))=\CC ^5$ and $H^0(Q^3, \mathcal{I}_{C/Q^ 3} (1))=0$. In particular, $C$ is a rational normal quartic curve in $Q^3$; if $k=4$, then $H ^0(Q^4, \mathcal{I}_{S/Q^ 4} (3))=\CC ^6$ and $H^0(Q^4, \mathcal{I}_{S/Q^ 4} (1))=H^0(Q^4, \mathcal{I}_{S/Q^ 4} (2))=0$. In particular, $S$ is not a complete intersection, it is non-degenerate, and it is not contained in a quadratic complex of $Q^4$.  
\end{pr}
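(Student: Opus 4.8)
The plan is to extract these cohomological statements from the numerical data already established together with the ideal sheaf resolutions and the general theory of the map $\varphi$. The key observation is that the linear system inducing $\varphi$ is, by construction, the complete linear system $|\ko_{Q^k}(n)|$ of forms of degree $n$ vanishing on $\mathfrak{B}$, where $n=2$ if $k=3$ and $n=3$ if $k=4$ by Proposition \ref{values of n,m,r,r'}. Since $\varphi:Q^k\dashrightarrow Q^k\subseteq \PP^5$ is given by sections of $\mathcal{L}=\ko_{Q^k}(n)$ landing in $\PP^5$, and since the sections cut out precisely $\mathfrak{B}$ as their common base locus, I expect the image to be non-degenerate in $\PP^5$, which forces $h^0(\mathcal{I}_{\mathfrak{B}/Q^k}(n))\geq 6$.

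First I would pin down the dimension count. For $k=4$ the relevant resolution is the one from Proposition \ref{resolution of ideal sheaf of S types E and B}, but we cannot yet invoke it since we have not yet identified $S$ as type $Z_E^{10}$; instead I would compute $h^0(\mathcal{I}_{S/Q^4}(3))$ directly. The strategy is to use the exact sequence
\[
0\rightarrow \mathcal{I}_{S/Q^4}(3)\rightarrow \ko_{Q^4}(3)\rightarrow \ko_S(3H_S)\rightarrow 0,
\]
compute $h^0(\ko_{Q^4}(3))$, and control $h^0(S,3H_S)$ via Riemann-Roch on $S$ using the invariants from Proposition \ref{equationsofS} (for whatever value of $\mathfrak{d}$), together with the vanishing of $H^1(\mathcal{I}_{S/Q^4}(3))$. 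The equality $h^0(\mathcal{I}_{S/Q^4}(3))=6$ should then follow from comparing this count against the fact that the six forms defining $\varphi$ are linearly independent (injectivity of $\varphi^\ast$ on $H^0(Q^4,\ko(1))$) and span the whole space. For the linear and quadratic vanishing I would argue that any section of $\mathcal{I}_{S/Q^4}(1)$ or $\mathcal{I}_{S/Q^4}(2)$ would produce a relation among the cubic generators forcing the image of $\varphi$ to lie in a quadric or be otherwise degenerate in a way incompatible with $H^4=2$ and the cubo-cubic relation $H=3H'-E'$; concretely, a linear form vanishing on $S$ would make $S$ degenerate, contradicting that $\varphi$ is dominant onto a non-degenerate $Q^4$, and a quadric through $S$ would be incompatible with the intersection numbers computed in Lemma \ref{intersection theory}. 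The analogous but simpler argument handles $k=3$, where $h^0(\mathcal{I}_{C/Q^3}(2))=5$ matches the image $Q^3\subseteq\PP^4$, and $h^0(\mathcal{I}_{C/Q^3}(1))=0$ says the rational normal quartic is non-degenerate.

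The main obstacle I anticipate is the quadratic vanishing $H^0(\mathcal{I}_{S/Q^4}(2))=0$, i.e.\ showing $S$ is not contained in a quadratic complex. This is exactly the dividing line between types $^IZ_E^{10}$ and $^{II}Z_E^{10}$, so it is the crux of the whole classification and not a formal consequence of the resolution. The cleanest route is to leverage the structure of the resolution $\tau$: since $\varphi$ is cubo-cubic and $H=3H'-E'$, a quadric $V(s)\supseteq S$ would pull back under $\sigma$ to a divisor $2H-\epsilon E$ for some $\epsilon\geq 1$, and I would translate this into a statement about $\tau$-images that contradicts either the irreducibility of $E'$ (Proposition \ref{equations in PicX}) or the degree bound $\mathfrak{d}<18$; alternatively one shows that a quadric through $S$ would force the strict transform of a general quadric surface section to have the wrong self-intersection. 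I expect to need the precise relation between the linear system $|2H-E|$ on $X$ and the geometry of $\tau$ to rule this out, and making that argument rigorous — rather than merely numerically plausible — is where the real work lies.
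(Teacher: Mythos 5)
Your proposal has genuine gaps at exactly the two steps that constitute the paper's proof. First, the exact count $h^0(Q^k,\mathcal{I}_{\mathfrak{B}/Q^k}(n))=k+2$. Your Riemann--Roch route through $0\rightarrow \mathcal{I}_{S/Q^4}(3)\rightarrow \ko_{Q^4}(3)\rightarrow \ko_S(3H_S)\rightarrow 0$ cannot be carried out at this point in the logical order: the proposition is proved \emph{before} Proposition \ref{S is degree 10} (indeed it is an input to it), so $\mathfrak{d}$ is only constrained, $\chi(\ko_S)$ still depends on the unknown product $\mathfrak{a}\mathfrak{b}$ via Proposition \ref{equationsofS}(\textit{d}), and you have no source for the vanishing $H^1(\mathcal{I}_{S/Q^4}(3))=0$ that you invoke --- the resolution of Proposition \ref{resolution of ideal sheaf of S types E and B} is unavailable, as you yourself note. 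Linear independence of the six forms defining $\varphi$ only yields $h^0\geq 6$; the substantive point is that the homaloidal system is \emph{complete}, i.e.\ $\varphi^{\ast}(H^0(Q^k,\ko_{Q^k}(1)))=H^0(Q^k,\mathcal{I}_{\mathfrak{B}/Q^k}(n))$. The paper gets this from \cite[Theorem~7.1.3]{dolgachev.classical}; equivalently, since $\tau$ is a birational morphism onto the normal target, $\tau_{\ast}\ko_X=\ko_{Q^k}$ gives $H^0(Q^k,\ko_{Q^k}(1))\cong H^0(X,\ko_X(nH-E))=H^0(Q^k,\mathcal{I}_{\mathfrak{B}/Q^k}(n))$. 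Your proposal never supplies this step, and everything downstream needs the equality, not the inequality.

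Second, the vanishing in degrees $j<n$. Your argument for $j=1$ is a non sequitur: $S$ lying in a hyperplane does not by itself contradict dominance of $\varphi$ onto a non-degenerate quadric. For $j=2$ you explicitly concede you lack a rigorous argument and expect to need the fine structure of $\tau$ (irreducibility of $E'$, degree bounds, strict transforms) --- but no such geometry is needed, and this is where your anticipated ``real work'' dissolves. The paper's argument is elementary and uniform once the exact count is in hand: if $0\neq s\in H^0(\mathcal{I}_{\mathfrak{B}/Q^k}(j))$ with $j<n$, the $k+2$ linearly independent sections $sx_0^{n-j},\dots,sx_{k+1}^{n-j}$ form a basis of the $(k+2)$-dimensional space $H^0(\mathcal{I}_{\mathfrak{B}/Q^k}(n))$, so every member of the linear system defining $\varphi$ vanishes on the divisor $V(s)$, forcing $V(s)\subseteq \mathfrak{B}$ --- absurd, since $\mathfrak{B}$ has codimension at least $2$. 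Note that this multiplication trick collapses if one only knows $h^0\geq k+2$, which is precisely why the completeness step you skipped is indispensable; your contrary expectation that the quadratic vanishing is ``the crux of the whole classification and not a formal consequence'' is wrong --- given completeness, it is a three-line formal consequence.
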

\begin{proof}
    We have $\CC ^{k+2}= \varphi ^{\ast}(H^0(Q^k ,\ko_{Q^k}(1)))=H^0(Q^k , \mathcal{I}_{\mathfrak{B}/Q^ k}(n)) =H^0 (X, \ko_X (nH-E))$ by \cite[Theorem~7.1.3]{dolgachev.classical}. Now assume $j<n $. If $0\neq s\in H^0(Q^k, \mathcal{I}_{\mathfrak{B}/Q^ k} (j))$, then we can define $k+2$ independent non-zero sections $sx_0 ^{n-j}, sx_1 ^{n-j}, \dots, sx_{k+1} ^{n-j}\in H^0(Q^k, \mathcal{I}_{\mathfrak{B}/Q^ k} (n))$, where $x_0,\dots, x_{k+1}$ are coordinates giving a basis of $ H^0(Q^k ,\ko_{Q^k}(1))$. But then we see that the codimension $1$ zero locus $V(s)$ must be contained in the base locus scheme $\mathfrak{B}$. This is absurd. In particular, if $k=3$ or $4$, then we get the cohomology groups in the statement. Also notice that $C$ is a rational normal curve since $C\subseteq \PP^4$ has degree $4$, it is isomorphic to $\PP^1$, and is non-degenerate. The statements about $S$ are clear.
\end{proof}

At this point it is possible to give a classification of special birational maps of the three-dimensional smooth quadric $Q^3$.

\begin{proof}[(Proof of Theorem \ref{classificationSpecialtransQ3})]
    The first claim follows from Proposition \ref{values of n,m,r,r'} and Proposition \ref{Sisnotinaquadraticcomplex}. The converse statement is essentially proved in \cite[Lemma~2.2]{kuznetsov2022rationality}. We just remark that smoothness of the curve $\mathfrak{B}'_{red}$ implies $\mathfrak{B}'=\mathfrak{B}'_{red}$ by (\textit{a}) in Proposition \ref{equations in PicX}. Thus, $\mathfrak{B}'$ is also a rational normal quartic curve by symmetry.
\end{proof}

The four-dimensional case needs more work. As a first step, we find the degree of the surface $S$ using some results from the theory of congruences of lines in $\PP^3$.

\begin{pr}\label{S is degree 10} $S$ is a surface of degree $\mathfrak{d} =10$.
\end{pr}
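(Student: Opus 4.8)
The plan is to exploit the numerical constraints already assembled in Proposition \ref{equationsofS} together with the classification-type bounds on congruences from Section \ref{section smooth surfaces in Q4}. We already know from Proposition \ref{values of n,m,r,r'} that $\varphi$ is cubo-cubic with $(r,r')=(2,2)$, and from Proposition \ref{equationsofS} that $\mathfrak{d}<18$ and that $S$ satisfies
\begin{equation*}
H_S K_S = 5\mathfrak{d}-48,\quad c_2(S)=25\mathfrak{d}-224,\quad \pi=3\mathfrak{d}-23.
\end{equation*}
The key observation is that these already impose severe restrictions: since $\pi=3\mathfrak{d}-23\geq 0$ forces $\mathfrak{d}\geq 8$, we are reduced to the finite range $8\leq \mathfrak{d}\leq 17$. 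The strategy is to eliminate all values of $\mathfrak{d}$ in this range except $\mathfrak{d}=10$.

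First I would compare the forced sectional genus $\pi=3\mathfrak{d}-23$ against the maximal sectional genus bound $\pi\le\pi_1(\mathfrak{d})$ of Theorem \ref{maximalsectionalgenus}, which applies since $S$ is non-degenerate (by Proposition \ref{Sisnotinaquadraticcomplex}) and $\mathfrak{d}\geq 9$. Writing out $\pi_1(\mathfrak{d})$ by residue class and checking the inequality $3\mathfrak{d}-23\le\pi_1(\mathfrak{d})$ for each $\mathfrak{d}$ in the range should rule out the large values of $\mathfrak{d}$, since $\pi_1(\mathfrak{d})\sim \mathfrak{d}^2/8$ grows quadratically while $3\mathfrak{d}-23$ is linear but with a steep enough slope to exceed $\pi_1$ for small-to-moderate $\mathfrak{d}$. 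This should cut the range down to a handful of candidates near the point where the linear and quadratic expressions cross.

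Next, for the surviving candidates I would bring in the finer obstruction of Theorem \ref{lowerboundofp.a}, which under the equality $\pi=\pi_1(\mathfrak{d})$ gives a lower bound on $p_a$ together with the requirement that $S$ lie in a quadratic complex; but Proposition \ref{Sisnotinaquadraticcomplex} tells us precisely that $S$ is \emph{not} contained in a quadratic complex. This contradiction should eliminate any candidate for which $\pi=\pi_1(\mathfrak{d})$ is forced. For the remaining candidates where $\pi<\pi_1(\mathfrak{d})$ strictly, I expect to need an integrality or positivity argument: using $12\chi(\ko_S)=\mathfrak{d}^2+23\mathfrak{d}-2\mathfrak{a}\mathfrak{b}-256$ from Proposition \ref{equationsofS}(\textit{d}) together with $\mathfrak{a}+\mathfrak{b}=\mathfrak{d}$ and $\chi(\ko_S)\in\ZZ_{\ge 0}$, one constrains the bidegree $(\mathfrak{a},\mathfrak{b})$; combined with Corollary \ref{square of K_S} and the inequality $K_S^2\le$ appropriate bounds for the surface type, the arithmetic should single out $\mathfrak{d}=10$.

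The main obstacle I anticipate is the middle range of $\mathfrak{d}$ (roughly $\mathfrak{d}=11,12,13$), where neither the crude genus bound nor the quadratic-complex obstruction may immediately apply, and where one must argue directly that no smooth congruence with the prescribed invariants $(H_SK_S, c_2(S), \pi)$ exists. Here the cleanest route is likely to check that the forced value $\chi(\ko_S)=\frac{1}{12}(\mathfrak{d}^2+23\mathfrak{d}-2\mathfrak{a}\mathfrak{b}-256)$ cannot be a non-negative integer for an admissible bidegree, or that $c_2(S)=25\mathfrak{d}-224$ becomes negative or otherwise inconsistent with the Euler characteristic of an actual smooth surface. I would grind through the residue-class bookkeeping of $\pi_1(\mathfrak{d})$ and the finitely many bidegrees only after confirming that this combination of genus bound, quadratic-complex exclusion, and integrality of $\chi(\ko_S)$ leaves $\mathfrak{d}=10$ as the unique survivor.
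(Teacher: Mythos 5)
Your proposal is correct and follows essentially the same route as the paper: reduce to $8\leq \mathfrak{d}\leq 17$ via $\pi=3\mathfrak{d}-23>0$ and $\mathfrak{d}<18$, then eliminate cases using the genus bound of Theorem \ref{maximalsectionalgenus}, the quadratic-complex exclusion coming from Theorem \ref{lowerboundofp.a} against Proposition \ref{Sisnotinaquadraticcomplex}, and the divisibility $12\mid \mathfrak{d}^2+23\mathfrak{d}-2\mathfrak{a}\mathfrak{b}-256$ over the finitely many bidegrees. The only difference from your anticipated bookkeeping is where each tool bites: the strict genus bound kills the middle values $\mathfrak{d}=13,14,15$, the equality $\pi=\pi_1(\mathfrak{d})$ occurs at $\mathfrak{d}=11,12,16,17$ (where the quadratic-complex contradiction applies), and the divisibility argument disposes of $\mathfrak{d}=8,9$, leaving $\mathfrak{d}=10$.
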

\begin{proof}
    We have $3\mathfrak{d} -23=\pi >0$ by (\textit{c}) in Proposition \ref{equationsofS}. In particular, we have $8\leq \mathfrak{d} \leq 17$. If $\mathfrak{d} =8$ or $9$, we consider all pairs $(\mathfrak{a},\mathfrak{b})$ of non-negative integers $\mathfrak{a}$ and $\mathfrak{b}$ such that $\mathfrak{a}+\mathfrak{b}=\mathfrak{d}$. For all such pairs we necessarily have $12\vert K_S ^2 +c_ 2= \mathfrak{d} ^2 +23\mathfrak{d} -2\mathfrak{a}\mathfrak{b} -256$ by (\textit{d}) in Proposition \ref{equationsofS}. However, it can be easily checked that this divisibility condition never holds. On the other hand, if $\mathfrak{d} =13$, $14$ or $15$, we get $\pi > \pi_ 1 (\mathfrak{d})$. This contradicts Theorem \ref{maximalsectionalgenus}. Finally, if $\mathfrak{d} =11$, $12$, $16$ or $17$, we get $\pi = \pi_ 1 (\mathfrak{d})$. Then Theorem \ref{lowerboundofp.a} implies that $S$ is contained in a quadratic complex. This not possible by Proposition \ref{Sisnotinaquadraticcomplex}. 
\end{proof}

Using the classification of smooth surfaces of degree $10$ in the four-dimensional smooth quadric, we can easily identify the geometric type of $S$. It turns out that $S$ is a non-minimal K3 surface. 

\begin{pr}\label{S is a blow of a K3} $S$ is a surface of type $^{II} Z_{E}^{10}$. In particular, $S$ contains two skew $(-1)$-lines whose contraction defines a K3 surface of genus $7$.
\end{pr}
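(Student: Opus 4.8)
The plan is to combine the two tools the excerpt has already assembled: the classification of smooth degree-$10$ congruences (Theorem \ref{classif deg 10 surfaces thm}) and the cohomological obstructions we have just derived for $S$. We know from Proposition \ref{S is degree 10} that $\mathfrak{d}=10$, and from Proposition \ref{Sisnotinaquadraticcomplex} that $S$ is non-degenerate and \emph{not} contained in a quadratic complex, i.e. $h^0(\mathcal{I}_{S/Q^4}(2))=0$. The strategy is therefore to run down Gross's list in Theorem \ref{classif deg 10 surfaces thm} and eliminate every type except $Z_E^{10}$, and then use the quadratic-complex condition to pin down the finer type $^{II}Z_E^{10}$.

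First I would compute the discrete invariants of $S$ from Proposition \ref{equationsofS} specialized to $\mathfrak{d}=10$: this gives $H_SK_S=2$, $c_2(S)=26$, and sectional genus $\pi=7$. Noether's formula together with Corollary \ref{square of K_S} then yields $12\chi(\mathcal{O}_S)=K_S^2+c_2(S)=\mathfrak{d}^2+23\mathfrak{d}-2\mathfrak{a}\mathfrak{b}-256=74-2\mathfrak{a}\mathfrak{b}$, so that $\chi(\mathcal{O}_S)$ and the bidegree $(\mathfrak{a},\mathfrak{b})$ with $\mathfrak{a}+\mathfrak{b}=10$ are constrained by the integrality of $\chi(\mathcal{O}_S)$. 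Matching $\chi(\mathcal{O}_S)$, the sectional genus $\pi=7$, and the bidegree against Gross's three bidegree families $(3,7)$, $(4,6)$, $(5,5)$ should leave only a short list of candidate types. Indeed the surface has the numerics of a (blown-up) K3 surface, so $\chi(\mathcal{O}_S)=2$; consulting the same table used in Lemma \ref{only two types of surfaces}, the minimal Enriques surface of bidegree $(3,7)$ and the surfaces with $\chi\neq 2$ are immediately excluded, and I expect the candidates to collapse to types $Z_B^{10}$, $Z_D^{10}$, and $Z_E^{10}$ exactly as in that lemma. In fact, since $S$ has Hilbert polynomial $P(t)=5t^2-t+2$, I can invoke Lemma \ref{only two types of surfaces} directly to conclude $S$ is of type $Z_B^{10}$ or $Z_E^{10}$.

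To decide between $Z_B^{10}$ and $Z_E^{10}$, and to get the superscript, I would use Proposition \ref{resolution of ideal sheaf of S types E and B}. That proposition states that a surface of type $Z_B^{10}$ is \emph{never} contained in a quadratic complex, while a surface of type $Z_E^{10}$ comes in two flavors according to whether it is contained in a quadratic complex. Since Proposition \ref{Sisnotinaquadraticcomplex} already tells us $h^0(\mathcal{I}_{S/Q^4}(2))=0$, the only remaining discriminator between $Z_B^{10}$ and $Z_E^{10}$ must come from the cubic resolution data, specifically from the fact that $h^0(\mathcal{I}_{S/Q^4}(3))=6$ established in Proposition \ref{Sisnotinaquadraticcomplex}: the type-$Z_B^{10}$ resolution involves $\mathcal{E}(-3)\oplus\mathcal{E}(-3)$, whereas the type-$^{II}Z_E^{10}$ resolution involves $\mathcal{E}(-3)\oplus\mathcal{E}'(-3)$, with both spinor bundles appearing. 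Computing $h^0(\mathcal{I}_{S/Q^4}(3))$ from each resolution and comparing with the value $6$ forced by the birationality should rule out $Z_B^{10}$, leaving $S$ of type $^{II}Z_E^{10}$; the ``$II$'' superscript is then immediate from the very definition, since $h^0(\mathcal{I}_{S/Q^4}(2))=0$.

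The main obstacle I anticipate is the clean separation of $Z_B^{10}$ from $Z_E^{10}$. The two types share degree, bidegree $(4,6)$ versus $(5,5)$, and the numerical invariant $\chi(\mathcal{O}_S)=2$, so a purely numerical argument may not suffice; I would need to exploit the structural difference in the ideal-sheaf resolutions of Proposition \ref{resolution of ideal sheaf of S types E and B}, or alternatively appeal to the bidegree: type $Z_B^{10}$ has bidegree $(4,6)$ while type $Z_E^{10}$ has bidegree $(5,5)$, and the constraint $E^2=-5H^2+\cdots$ together with the value of $\mathfrak{a}\mathfrak{b}$ forced by $\chi(\mathcal{O}_S)=2$ (giving $2\mathfrak{a}\mathfrak{b}=50$, hence $\mathfrak{a}\mathfrak{b}=25$ and $(\mathfrak{a},\mathfrak{b})=(5,5)$) should distinguish them cleanly. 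This bidegree computation is the crux: once $(\mathfrak{a},\mathfrak{b})=(5,5)$ is established, type $Z_B^{10}$ (bidegree $(4,6)$) is excluded and $S$ must be of type $Z_E^{10}$, hence $^{II}Z_E^{10}$. The final sentence of the proposition, on the two skew $(-1)$-lines and the genus-$7$ K3 contraction, then follows directly from the description of type $Z_E^{10}$ surfaces recalled after Theorem \ref{classif deg 10 surfaces thm}.
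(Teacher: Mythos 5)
Your proposal is correct, and it is in essence the paper's own argument: both proofs reduce to matching the numerical invariants $H_SK_S=2$, $c_2(S)=26$, $\pi=7$ (from Proposition \ref{equationsofS} with $\mathfrak{d}=10$) against Gross's classification (Theorem \ref{classif deg 10 surfaces thm} and \cite[\S 4 Table~1]{gross.degree10}), with the superscript $II$ coming from $h^0(\mathcal{I}_{S/Q^4}(2))=0$ in Proposition \ref{Sisnotinaquadraticcomplex}. The organizational difference is that the paper reads the type off the table directly --- $c_2(S)=26$ already singles out $Z_E^{10}$ among all degree-$10$ types, and the bidegree $(5,5)$ is then a consequence --- whereas you \emph{derive} the bidegree first and use it to separate $Z_B^{10}$ from $Z_E^{10}$; your derivation is sound, since among $\mathfrak{a}\mathfrak{b}\in\{0,9,16,21,24,25\}$ only $\mathfrak{a}\mathfrak{b}=25$ makes $12\chi(\ko_S)=74-2\mathfrak{a}\mathfrak{b}$ divisible by $12$, forcing $(\mathfrak{a},\mathfrak{b})=(5,5)$ and $\chi(\ko_S)=2$ simultaneously (this also repairs the momentarily circular-sounding phrase ``the surface has the numerics of a blown-up K3, so $\chi(\ko_S)=2$'': integrality alone gives $\chi(\ko_S)=2$, no K3 hypothesis needed, and only then is Lemma \ref{only two types of surfaces} applicable via the Hilbert polynomial $P(t)=5t^2-t+2$). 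One caution: the alternative discriminator you float --- comparing $h^0(\mathcal{I}_{S/Q^4}(3))$ computed from the two resolutions in Proposition \ref{resolution of ideal sheaf of S types E and B} --- would \emph{not} work, since both the $Z_B^{10}$ resolution and the $^{II}Z_E^{10}$ resolution yield $h^0(\mathcal{I}_{S/Q^4}(3))=6$ (twisting by $\ko_{Q^4}(3)$ gives $\ko_{Q^4}^{\oplus 6}$ in the middle and the spinor bundles have no sections); you were right to anticipate that this might fail and to make the bidegree (equivalently, $c_2$) computation the actual crux.
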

\begin{proof}
    Recall that $S$ is non-degenerate and it is not contained in a quadratic complex of $Q^4$ by Proposition \ref{Sisnotinaquadraticcomplex}. By substituting $\mathfrak{d} =10$ in the formulas of Proposition \ref{equationsofS}, we get the following numerical data: $H_S K_S=2 $, $ c_2(S)=26 $, and $ \pi =7$. These values can only belong to a surface of type $ Z_{E}^{10}$ by Theorem \ref{classif deg 10 surfaces thm}, which also has bidegree $(\mathfrak{a},\mathfrak{b})=(5,5)$ and $K_S^2= -2$ (see \cite[\S 4 Table~1]{gross.degree10}, where the main invariants of non-degenerate surfaces of degree up to $10$ in $Q^4$ are listed). Therefore, $S$ is a surface of type $^{II} Z_{E}^{10}$.
\end{proof}

The next proposition summarizes the characteristics of $\varphi$ that we have found so far in the four-dimensional case.

\begin{pr}\label{characterization of special bir trans of 4-quadric}
 Let $\varphi:Q^4\dashrightarrow Q^4$ be a special birational transformation of $Q^4$. Then $\varphi$ is a cubo-cubic transformation, and its base locus scheme is a surface $S$ of type $^{II} Z_{E}^{10}$.
\end{pr}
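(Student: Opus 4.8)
The plan is to assemble Proposition~\ref{characterization of special bir trans of 4-quadric} directly from the intermediate results already proved in this section, since the statement is essentially the conjunction of two facts that have been established separately. First I would invoke Proposition~\ref{values of n,m,r,r'} with $k=4$: it gives $(n,m)=(3,3)$, which by definition means that both $\varphi$ and $\varphi^{-1}$ are induced by linear systems of cubics in $Q^4$, i.e. $\varphi$ is a cubo-cubic transformation. This settles the first assertion immediately, with no further computation needed.

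Next I would identify the base locus scheme. By Proposition~\ref{values of n,m,r,r'} the base locus scheme has dimension $r=2$, so it is a surface $S$; by hypothesis $\varphi$ is special, so $S$ is smooth. The degree is pinned down by Proposition~\ref{S is degree 10}, giving $\mathfrak{d}=10$, and then Proposition~\ref{S is a blow of a K3} shows that $S$ is of type $^{II} Z_{E}^{10}$. Thus the second assertion follows verbatim from Proposition~\ref{S is a blow of a K3}, which in turn rests on the numerical data of Proposition~\ref{equationsofS}, the non-degeneracy and absence of a containing quadratic complex from Proposition~\ref{Sisnotinaquadraticcomplex}, and the Gross classification in Theorem~\ref{classif deg 10 surfaces thm}.

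Since every ingredient has already been proved, the proof is purely a matter of citation and assembly, and there is no genuine obstacle to overcome here; the real work was done in the preceding propositions, particularly in the degree computation of Proposition~\ref{S is degree 10} (which eliminates all degrees other than $10$ via the maximal-sectional-genus and arithmetic-genus bounds together with the divisibility constraint on $K_S^2+c_2(S)$) and in the type identification of Proposition~\ref{S is a blow of a K3}. I would therefore write the proof as a short paragraph that strings these references together in logical order.

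\begin{proof}
By Proposition~\ref{values of n,m,r,r'}, the case $k=4$ forces $(n,m)=(3,3)$, so $\varphi$ is a cubo-cubic transformation. The same proposition gives $r=\dim(\mathfrak{B})=2$, and since $\varphi$ is special, its base locus scheme is a smooth surface $S$. By Proposition~\ref{S is degree 10}, $S$ has degree $\mathfrak{d}=10$, and by Proposition~\ref{S is a blow of a K3}, $S$ is of type $^{II} Z_{E}^{10}$.
\end{proof}
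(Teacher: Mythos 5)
Your proposal is correct and matches the paper's own proof, which simply cites Proposition~\ref{values of n,m,r,r'} and Proposition~\ref{S is a blow of a K3}; your extra explicit mention of Proposition~\ref{S is degree 10} is already subsumed in the latter. Nothing is missing and nothing differs in substance.
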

\begin{proof}
    This is the content of Proposition \ref{values of n,m,r,r'} and Proposition \ref{S is a blow of a K3}.
\end{proof}

\subsection{Effective classification.}
We are now interested to know if the converse of Proposition \ref{characterization of special bir trans of 4-quadric} holds, i.e., we ask the following question:

\textit{Do all surfaces of type $^{II} Z_{E}^{10}$ induce special self-birational transformations of the smooth quadric fourfold?}

For the moment, we can prove a weaker statement. 

\begin{pr}\label{S induces a special birational transformation onto a quadric} Let $S\subseteq Q^4$ be a smooth surface of type $^{II} Z_{E}^{10}$. Then the linear system of cubic complexes passing through $S$ induces a special birational map $\varphi$ from $Q^4$ onto a quadric hypersurface $Z$ in $\PP ^5$ of rank $\geq 3$ whose base locus scheme is the surface $S$. In particular, $Bl_S(Q^4)$ is a Fano fourfold of index $1$ anti-canonically embedded in $Q^4\times Z\subseteq \PP^5 \times \PP^5 $.
\end{pr}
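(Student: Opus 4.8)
The plan is to use the explicit ideal-sheaf resolution of a surface $S$ of type $^{II} Z_{E}^{10}$ from Proposition \ref{resolution of ideal sheaf of S types E and B} to control the linear system of cubic complexes through $S$, and then to argue that the induced map lands in a quadric. First I would recall from Proposition \ref{Sisnotinaquadraticcomplex} that $h^0(Q^4, \mathcal{I}_{S/Q^4}(3))=6$ while $h^0(Q^4,\mathcal{I}_{S/Q^4}(j))=0$ for $j=1,2$; this guarantees that the linear system of cubics through $S$ is exactly $5$-dimensional, so it defines a rational map $\varphi:Q^4\dashrightarrow \PP^5$ whose base locus scheme is precisely $S$ (scheme-theoretically, since the six cubics generate $\mathcal{I}_{S/Q^4}(3)$ by the surjection in the resolution). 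Setting $X:=Bl_S(Q^4)$ with blow-up $\sigma$ and exceptional divisor $E$, the map $\varphi$ is resolved by the morphism $\tau:X\rightarrow \PP^5$ induced by the complete linear system $|\sigma^\ast\ko(3)\otimes\ko(-E)|=|3H-E|$, as in the discussion of \S 3.1.

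Next I would identify the target. The key numerical input is that $|H_S+K_S|$ is base-point free (by \cite{sommese.81}, used in Proposition \ref{a general surface of type II has Picard number 1}) and realizes $S\rightarrow \PP^7$ as the blow-up of a genus-$7$ K3 surface $S_0$ at two points; combined with the intersection numbers of Lemma \ref{intersection theory} with $\mathfrak{d}=10$, $H_SK_S=2$, $c_2(S)=26$, $K_S^2=-2$, one computes the self-intersections of $H'=3H-E$. Using $H^4=2$, $H^3E=0$, $H^2E^2=-10$, $HE^3=-H_SK_S-40=-42$, $E^4=-90-26-8=-124$, I would compute $(3H-E)^4 = 81\cdot 2 - 4\cdot 27\cdot 0 + 6\cdot 9\cdot(-10) - 4\cdot 3\cdot(-42) + (-124) = 162 - 540 + 504 - 124 = 2$. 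Thus $\tau$ has degree-$2$ image, so $Z:=\tau(X)$ is a (possibly singular) quadric hypersurface in $\PP^5$, and $\varphi:Q^4\dashrightarrow Z$ is a birational map onto a quadric, special because $S$ is smooth.

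For the rank statement I would argue that $Z$ cannot be too degenerate: a quadric of rank $\leq 2$ is reducible (a union of two hyperplanes) or a double hyperplane, which would contradict the irreducibility of the image of the irreducible $X$ together with the fact that $\tau$ is given by a \emph{complete} linear system of projective dimension $5$ separating enough points; more cleanly, one shows $Z$ is irreducible and reduced, forcing $\mathrm{rk}\geq 3$. The anticanonical/Fano claim then follows from Proposition \ref{equations in PicX}(c): with $k=4$, $r=2$ one gets $K_X=-4H+E=-4H'+E'$, hence $-K_X=H+H'=\sigma^\ast\ko(1)\boxtimes\tau^\ast\ko(1)$ restricted to $X\subseteq Q^4\times Z$, which is the restriction of $\ko(1,1)$ on $\PP^5\times\PP^5$; since $H+H'$ is ample and primitive in $Pic(X)=\ZZ H\oplus\ZZ E$, $X$ is Fano of index $1$, anticanonically embedded in $Q^4\times Z$. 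The main obstacle I anticipate is verifying that $\tau$ is genuinely \emph{birational} onto $Z$ rather than merely dominant of some degree dividing the computation — i.e. ruling out that $\varphi$ is a double cover of the quadric; this requires checking that $\varphi^{-1}$ is rational, which one gets from the general theory (the graph $\Gamma_\varphi$ is the blow-up on both sides, forcing $\deg\varphi=1$) together with the rank-$\geq 3$ normality of $Z$ so that $\tau$ factors through an honest birational contraction.
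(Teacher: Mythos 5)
Your proposal follows the paper's own route almost step for step: compute $h^0(Q^4,\mathcal{I}_{S/Q^4}(3))=6$ from the resolution in Proposition \ref{resolution of ideal sheaf of S types E and B}, use the surjection $\ko_{Q^4}(-3)^{\oplus 6}\rightarrow \mathcal{I}_{S/Q^4}$ to see that the base locus scheme is exactly $S$ (so $\varphi$ is special), resolve $\varphi$ by $\tau=|3H-E|$ on $X=Bl_S(Q^4)$, compute $(3H-E)^4=2$ from Lemma \ref{intersection theory} (your arithmetic $162-540+504-124=2$ is correct), and conclude via the graph embedding $X\cong \Gamma_\varphi\subseteq Q^4\times Z$ together with $-K_X=4H-E=H+(3H-E)=H+H'$, which is ample as the restriction of $\ko(1,1)$ and primitive in $\ZZ H\oplus\ZZ E$. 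One cosmetic issue: your citations of Proposition \ref{Sisnotinaquadraticcomplex} and Proposition \ref{equations in PicX} are circular as stated, since both are proved under the hypothesis that a special birational self-map already exists --- which is what is being established here. This is harmless, because the facts you actually need ($h^0(\mathcal{I}_{S/Q^4}(3))=6$ and $K_X=-4H+E$) follow directly from the resolution and the blow-up formula for the canonical class, as you in effect also note; the further equality $K_X=-4H'+E'$ is neither available at this stage nor needed.

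The one substantive slip is in your final paragraph: the proposed way of ruling out $\deg\tau=2$, namely ``the graph $\Gamma_\varphi$ is the blow-up on both sides, forcing $\deg\varphi=1$,'' is itself circular, since the graph being a blow-up on the $Z$ side presupposes that $\varphi$ is birational and that $Z$ is factorial --- neither is known yet. The correct closing move, implicit in the paper's phrase ``non-degenerate variety of degree $2$,'' is pure linear algebra: the six cubics form a basis of $H^0(Q^4,\mathcal{I}_{S/Q^4}(3))$, so no hyperplane of $\PP^5$ contains $Z=\tau(X)$ (such a containment would give a nontrivial vanishing linear combination of the sections of $\ko_X(3H-E)$); hence $Z$ is an irreducible, non-degenerate fourfold in $\PP^5$ and therefore has degree at least $5-4+1=2$. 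Combined with $\deg\tau\cdot\deg Z=(3H-E)^4=2$, this forces $\deg\tau=1$ and $\deg Z=2$, so in particular the double-cover-of-a-hyperplane scenario you worried about is excluded by non-degeneracy, not by graph considerations. The rank bound then follows as you suggest: a quadric of rank $\leq 2$ has degenerate reduced components, incompatible with $Z$ irreducible and non-degenerate, so $Z$ has rank $\geq 3$ (this also replaces the vague appeal to ``separating enough points''). With these repairs your argument coincides with the paper's proof.
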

\begin{proof}
    The spinor bundles $\mathcal{E}$ and $\mathcal{E}'$ and the ideal sheaf $\mathcal{I}_{S/Q^ 4}$ fit in the following short exact sequence by Proposition \ref{resolution of ideal sheaf of S types E and B}: 
    \[0\rightarrow \mathcal{E}(-3)\oplus \mathcal{E}'(-3)\oplus \ko_{Q^4}(-4)\rightarrow \ko_{Q^4}(-3)^{\oplus 6} \rightarrow \mathcal{I}_{S/Q^ 4}\rightarrow 0.\]
    An easy computation shows $H^0(Q^4 , \mathcal{I}_{S/Q^ 4} (3))= H^0(Q^4 , \ko_{Q^4} ^{\oplus 6})= \CC ^6$. Hence, the linear system $|H^0 (Q^4, \mathcal{I}_{S/Q^ 4} (3))|\subseteq |H^0(Q^4, \ko_{Q^4} (3))|$ defines a rational map $\varphi: Q^4 \dashrightarrow \PP^5 $. Moreover, since the map $\ko_{Q^4}(-3)^{\oplus 6} \rightarrow\mathcal{I}_{S/Q^4 }$ is surjective, the six cubic polynomial equations defining a basis of $H^0(Q^4 , \mathcal{I}_{S/Q^ 4} (3))$ also generate a homogeneous ideal in the coordinate ring of $Q^4$ with associated ideal sheaf $\mathcal{I}_{S/Q^ 4}$, or equivalently, closed subscheme $S$. In other words, $S$ is the scheme-theoretic intersection of the basis elements of the linear system defining $\varphi$. This shows that $\varphi$ is special with base locus scheme $S$.

    Let $X:=Bl_S (Q^4)$ and $E$ be the exceptional divisor of the blow-up $\sigma : X\rightarrow Q^4$. Then the linear system $|3H-E|$ defines a morphism $\tau: X\rightarrow \PP ^5$ extending $\varphi$ via $\sigma $. If we substitute the numerical data of $S$ into the equations of Lemma \ref{intersection theory} (case $k=4$), we get $(3H-E)^4 = 2$. Hence, $\varphi$ is birational and the image $Z:=\tau(X)$ is a four-dimensional non-degenerate variety of degree $2$ in $\PP^5$, i.e., a quadric hypersurface of rank at least $3$.

    Finally, since the induced birational map $\varphi: Q^4\dashrightarrow Z$ is special, the blow-up $Bl_S(Q^4)$ can be identified with the graph $\Gamma _\varphi \subseteq Q^4\times Z$ of $\varphi$. In particular, $\ko_{Q^4\times Z}(1,1)$ restricts to the ample divisor $H+H'$ on $Bl_S(Q^4)$. But the anti-canonical divisor of $Bl_S(Q^4)$ can be rewritten as $-K_{Bl_S(Q^4)} =4H-E=H+(3H-E)=H+H'$, so it is ample. Moreover, using the decomposition $Pic( Bl_S(Q^4))=\ZZ H \oplus \ZZ E$ we can easily check that $H+H'$ is a primitive divisor. Hence, the index is $1$.
\end{proof}

A quadric of rank $\geq 3$ in $\PP^5$ is normal by \cite[Ch. II, Proposition~8.23 (b)]{hartshorne}. However, if we want to get a true converse of Proposition \ref{characterization of special bir trans of 4-quadric} we need to show that $Z$ is actually smooth. As a first step we introduce a variety that will help us understand the geometry of the map $\varphi: Q^4 \dashrightarrow Z$. 

\begin{defn} Let $\mathcal{V}$ be a non-degenerate subvariety of $\PP^k $ and $\ell $ a line not contained in $\mathcal{V}$. We say that $\ell$ is a trisecant of $\mathcal{V}$ if the scheme-theoretic intersection $\ell\cap \mathcal{V} $ is zero-dimensional of length $\geq 3$. The trisecant variety of $\mathcal{V}$ (in $\PP^k$) is defined as $\text{Trisec}(\mathcal{V}):=\bigcup \{ \ell\text{ is a line in }\PP^k : \ell\subseteq \mathcal{V} \text{ or } \ell \text{ is a trisecant of } \mathcal{V} \}$.
\end{defn}

Observe that $\text{Trisec}(S) \subseteq  Q^4$ since $S\subseteq Q^4$ and any line intersecting a quadric hypersurface in more than two points (counted with multiplicity) is actually contained in it. 

\begin{pr}\label{dim Trisec >= 3} We have $dim(\text{Trisec}(S)) \geq 3$ and $S\subseteq \text{Trisec}(S)\subseteq Q^4$. In particular, $S$ has a trisecant line.
\end{pr}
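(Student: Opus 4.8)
The plan is to produce a three-dimensional family of trisecant lines of $S$ by analysing the curves contracted by $\tau$, and then to show that these same lines already sweep out $S$. Throughout I use $n=m=3$ and the cubo-cubic relations $H'=3H-E$, $H=3H'-E'$ of Proposition \ref{values of n,m,r,r'}, together with the identities $E'=8H-3E$ and $E=8H'-3E'$ coming from Proposition \ref{equations in PicX}(d) (here $mn-1=8$). The inclusion $\text{Trisec}(S)\subseteq Q^4$ has already been observed before the statement, so only the first inclusion and the dimension bound remain. First I would record the numerical dictionary between trisecants and contracted curves: if $F$ is an irreducible curve contracted by $\tau$, so that $H'\cdot F=0$, then $\sigma(F)$ has degree $H\cdot F=(3H'-E')\cdot F=-E'\cdot F$ and meets $S$ with multiplicity $E\cdot F=(8H'-3E')\cdot F=3\,(H\cdot F)$. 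Hence, once $E'\cdot F=-1$, the image $\sigma(F)$ is a line meeting $S$ in a length-$3$ scheme, i.e. a trisecant line of $S$.

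To apply this I would look at the divisor $E'$ itself, which is irreducible of class $8H-3E$ and is contracted by $\tau$ onto $\mathfrak{B}'$, a scheme of dimension $r'=2$ by Proposition \ref{values of n,m,r,r'}; its general fibre $F$ is therefore a curve. By Proposition \ref{equations in PicX}(a) the scheme $\mathfrak{B}'$ is generically reduced, hence smooth at a general point in characteristic zero and not contained in the (linear) singular locus of the quadric $Z$ of Proposition \ref{S induces a special birational transformation onto a quadric}; over such a point $\tau$ restricts to the blow-up of a smooth point along a smooth codimension-two centre, so its exceptional fibre $F\cong\PP^1$ satisfies $\mathcal{O}_X(E')|_F=\mathcal{O}_{\PP^1}(-1)$, that is $E'\cdot F=-1$. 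Thus a general fibre of $E'\to\mathfrak{B}'$ maps to a trisecant line, and these images sweep out $\sigma(E')$, giving $\sigma(E')\subseteq\text{Trisec}(S)$. Since $\sigma$ is an isomorphism on $X\setminus E$ and the irreducible divisor $E'$ is not the exceptional divisor $E$ (their classes $8H-3E$ and $E$ differ), the dense open $E'\setminus E$ maps isomorphically onto a locally closed set of dimension $3$; hence $\dim\sigma(E')=3$ and $\dim\text{Trisec}(S)\geq 3$.

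For the inclusion $S\subseteq\text{Trisec}(S)$ I would show that the trisecant lines already cover $S$. Let $e$ be the class of a fibre of the $\PP^1$-bundle $\sigma|_E:E\to S$, so that $H\cdot e=0$ and $E\cdot e=-1$; then $E'\cdot e=(8H-3E)\cdot e=3>0$. Therefore every fibre of $E\to S$ meets $E'$, so the projection $\sigma|_E$ carries the two-dimensional set $E\cap E'$ onto all of $S$, i.e. $\sigma(E\cap E')=S$. Since $E\cap E'\subseteq E'$, this gives $S=\sigma(E\cap E')\subseteq\sigma(E')\subseteq\text{Trisec}(S)$; concretely, a general point $p\in S$ is the $\sigma$-image of a point $q\in E\cap E'$, and the contracted fibre $F$ of $\tau$ through $q$ yields a trisecant line $\sigma(F)\ni p$. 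Taking $\text{Trisec}(S)$ to be closed, this proves $S\subseteq\text{Trisec}(S)$, and in particular $S$ has a trisecant line.

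The step I expect to be the main obstacle is the verification that $E'\cdot F=-1$ for the general fibre — equivalently, that the contracted curves map to genuine \emph{lines} rather than to higher-degree rational curves meeting $S$ in $3d$ points (which would destroy the trisecant interpretation and the dimension count). The difficulty is that at this stage $Z$ is known only to be a quadric of rank $\geq 3$, so $\tau$ is not a priori the blow-up of a smooth variety along a smooth centre. The argument must therefore localize at a general point of $\mathfrak{B}'$ and use its generic reducedness from Proposition \ref{equations in PicX}(a) to reduce to the standard smooth blow-up computation; if $\mathfrak{B}'$ were to meet $\mathrm{Sing}(Z)$ along an entire component this localization would require extra care.
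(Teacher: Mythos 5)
Your argument is circular at the point in the paper where this proposition sits. Proposition \ref{dim Trisec >= 3} belongs to the effective (converse) direction of the classification: all that is known about $\tau$ here is Proposition \ref{S induces a special birational transformation onto a quadric}, namely that $\tau$ maps $X=Bl_S(Q^4)$ birationally onto a quadric $Z\subseteq\PP^5$ of rank $\geq 3$. The facts you import from Propositions \ref{values of n,m,r,r'} and \ref{equations in PicX} --- the relation $H=3H'-E'$, the class $E'=8H-3E$, the dimension $r'=2$, irreducibility of $E'$, and generic reducedness of $\mathfrak{B}'$ --- were all proved under the hypothesis that the \emph{target} is a smooth quadric, and none of them is available in the present setting. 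Worse, a quadric of rank $3$ or $4$ in $\PP^5$ is not factorial (its singular locus has codimension $\leq 3$), so at this stage the base locus scheme $\mathfrak{B}'$ of $\varphi^{-1}$ is not even defined and $\tau$ is not known to be a blow-up; the paper only obtains factoriality after Proposition \ref{geometric constraints on S}, via Grothendieck's parafactoriality theorem, once $Z$ is known to be at worst nodal. Even the fact that $\tau$ contracts a divisor at all --- your $E'$ --- is precisely what Proposition \ref{dim Trisec >= 3} is used to establish: Corollary \ref{trisec=Exc(R)} deduces divisoriality of $\tau$ from $\dim\widetilde{\text{Trisec}(S)}\geq 3$, and the identity $Exc(R)=8H-3E$ is computed only afterwards, in the proof of Proposition \ref{T is a surface}, with the identification $E'=Exc(R)$ coming later still. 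You flag this difficulty yourself in your last paragraph, but the proposed repair (generic reducedness from Proposition \ref{equations in PicX}(\textit{a})) invokes exactly a statement whose hypotheses fail here; and for a rank-$3$ quadric the singular locus is a plane, so a two-dimensional $\mathfrak{B}'$ could a priori sit inside it, which you acknowledge but do not rule out.

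The paper's proof is entirely different and independent of the birational map: it quotes I.~Bauer's classification of smooth non-degenerate surfaces $\Sigma\subseteq\PP^5$, not scrolls, with $\dim(\text{Trisec}(\Sigma))\leq 2$ --- a condition equivalent, by the same reference, to $\text{Trisec}(\Sigma)\cap\Sigma\neq\Sigma$ --- and observes that the eight surfaces on that list do not include a non-minimal K3 of degree $10$, which yields both $\dim(\text{Trisec}(S))\geq 3$ and $S\subseteq\text{Trisec}(S)$ at once. Your numerical dictionary (a $\tau$-contracted curve $F$ with $H\cdot F=1$, $E\cdot F=3$ maps to a trisecant; $E'\cdot e=3>0$ forces $\sigma(E\cap E')=S$) is in essence the paper's later analysis (Propositions \ref{true trisecants} and \ref{tau is contraction of extremal ray}) run in reverse, and it would be a correct computation once the divisorial blow-up structure of $\tau$ is known --- but it cannot serve as a proof of this proposition, whose role is precisely to supply that structure.
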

\begin{proof}
    In \cite[Theorem~1]{bauer98} I. Bauer provides the classification of smooth, non-degenerate, connected complex surfaces $\Sigma$ in $\PP^5$ which are not scrolls and such that $dim(\text{Trisec}(\Sigma))\leq 2$. In fact, the last condition is also equivalent to $\text{Trisec}(\Sigma)\cap \Sigma\neq \Sigma$ by \cite[Corollary~1]{bauer98}. There are just eight cases in her classification, none of which can be our surface $S$. Thus, the trisecant variety of $S$ must have dimension at least $3$ and contains the surface $S$.
\end{proof}

\begin{pr}\label{true trisecants}
    All trisecants of $S$ necessarily intersect $S$ in a zero-dimensional scheme of length exactly $3$. 
\end{pr}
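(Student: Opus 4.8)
The plan is to argue directly from the fact that $S$ is cut out \emph{scheme-theoretically} by the six cubics spanning $H^0(Q^4,\mathcal{I}_{S/Q^4}(3))=\CC^6$, as established in Proposition~\ref{S induces a special birational transformation onto a quadric} (where it is shown that the basis elements of this linear system generate the ideal sheaf $\mathcal{I}_{S/Q^4}$, so that $S$ is their scheme-theoretic intersection in $Q^4$). First I would fix a trisecant line $\ell$ of $S$. By the definition of trisecant, $\ell\not\subseteq S$ and $\ell\cap S$ is zero-dimensional of length $\geq 3$, and by the remark following the definition $\ell\subseteq Q^4$. Since the proposition asserts length exactly $3$ and the trisecant condition already gives length $\geq 3$, it suffices to rule out length $\geq 4$.

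Assume for contradiction that $\ell\cap S$ has length $\geq 4$. Let $F_1,\dots,F_6$ be a basis of $H^0(Q^4,\mathcal{I}_{S/Q^4}(3))$. For each $i$, the restriction $F_i|_\ell$ lies in $H^0(\ell,\ko_\ell(3))$, and since $F_i$ vanishes on $S$ it vanishes on the subscheme $\ell\cap S\subseteq\ell$. A nonzero cubic form on $\ell\cong\PP^1$ can vanish on a subscheme of length at most $3$; as $\ell\cap S$ has length $\geq 4$, this forces $F_i|_\ell\equiv 0$ for every $i$. Hence all of $F_1,\dots,F_6$ vanish identically along $\ell$. Because these cubics generate $\mathcal{I}_{S/Q^4}$, their common vanishing locus, taken scheme-theoretically, is precisely $S$; therefore $\ell\subseteq S$ as a subscheme, contradicting $\ell\not\subseteq S$. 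This contradiction shows that the length is exactly $3$.

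The argument is in essence a degree count on $\ell\cong\PP^1$ together with a base-locus argument. The one point that requires care—and the only place where more than set-theoretic information is needed—is the final deduction: vanishing of all six cubics on $\ell$ must be upgraded from ``$\ell$ is contained in the common zero set of the $F_i$'' to ``$\ell$ is contained in $S$ as a scheme.'' This is exactly why the scheme-theoretic description of the base locus in Proposition~\ref{S induces a special birational transformation onto a quadric} is invoked, rather than merely the equality of supports; with that input in hand the proof is immediate.
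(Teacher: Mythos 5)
Your proposal is correct and is essentially the paper's own argument, just run in contrapositive form: the paper chooses one cubic $C_i$ with $\ell\not\subseteq C_i$ and bounds $\mathrm{length}(\ell\cap S)\leq \mathrm{length}(\ell\cap C_i)\leq 3$, whereas you assume length $\geq 4$, force all six cubics to vanish identically on $\ell\cong\PP^1$ by the same degree count, and conclude $\ell\subseteq S$ scheme-theoretically, a contradiction. Both versions rest on exactly the same two inputs — the scheme-theoretic description of $S$ as the intersection of the six cubics with $Q^4$ from Proposition~\ref{S induces a special birational transformation onto a quadric}, and the fact that a trisecant lies in $Q^4$ — so there is no gap and nothing genuinely different to compare.
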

\begin{proof}
    Let $c_1,\dots, c_6$ be cubic equations giving a basis of $H^0(Q^4 , \mathcal{I}_{S/Q^ 4} (3))$ and let $q$ be the equation of the quadric $Q^4$. Also, denote by $C_i$ the cubic in $\PP^5$ determined by $c_i$. Then the homogeneous ideal $\langle q, c_1, c_2, \dots ,c_6 \rangle$ in $\CC[z_0, \dots, z_5]$ cuts the surface $S$ in $\PP^5$ scheme-theoretically, i.e., $S=\bigcap_{i=1}^{6} C_i \cap Q^4$. Now let $\ell$ be a trisecant line of $S$. If $J$ is the homogeneous ideal of $\ell$ in $\CC[z_0, \dots, z_5]$, then $\frac{\CC[z_0, \dots, z_5]}{J + \langle c_i \rangle}\rightarrow \frac{\CC[z_0, \dots, z_5]}{J + \langle q, c_1, c_2, \dots ,c_6\rangle}$ is surjective. Hence, the induced map on local rings $\ko_{p,\ell\cap C_i} \rightarrow \ko_{p,\ell\cap S}$ is surjective for all $p\in supp(\ell\cap S)$ and all $i$. In particular, $\text{length}(\ell\cap S )\leq \text{length}(\ell\cap C_i)$ if $\ell\nsubseteq C_i$. Of course, there is at least one $i$ such that $\ell\nsubseteq C_i$. Otherwise, we would have $\ell\subseteq \bigcap_{i=1}^{6} C_i \cap Q^4=S$, which is not possible by the definition of trisecant line. Thus, for such $i$ we have $3\leq \text{length}(\ell\cap S )\leq \text{length}(\ell\cap C_i)\leq 3$, i.e., $\text{length}(\ell\cap S )=3$.
\end{proof}

We study now the structure of the birational morphism $\tau$ with elementary techniques from the minimal model program. A reference for this subject is \cite{Debarre_Higuer}.

\begin{pr}\label{tau is contraction of extremal ray}
  The class of the strict transform of any trisecant line of $S$ induces an extremal ray $R$ of the Mori cone $\overline{NE}(X)$. The ray $R$ is not nef and $H'$ is a good supporting divisor for $R$. In fact, $\tau$ is the contraction morphism induced by $R$. 
\end{pr}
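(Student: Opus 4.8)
The plan is to run an explicit Mori-theoretic analysis of $X=Bl_S(Q^4)$, exploiting that $Pic(X)=\ZZ H\oplus\ZZ E$ gives $\rho(X)=2$, so that $\overline{NE}(X)$ is a full-dimensional strongly convex cone in $N_1(X)_\RR\cong\RR^2$ with exactly two extremal rays. I fix a trisecant line $\ell$ of $S$, which exists by Proposition \ref{dim Trisec >= 3}, and analyze the numerical class of its strict transform $\tilde\ell\subseteq X$.

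The key computation is $H'\cdot\tilde\ell=0$, which I would deduce from the geometry of $\varphi$ rather than from blow-up formulas. Let $c_1,\dots,c_6$ be cubics giving a basis of $H^0(Q^4,\mathcal I_{S/Q^4}(3))$ and $C_i\subseteq\PP^5$ the associated cubic. By Proposition \ref{true trisecants} the scheme $\ell\cap S$ is zero-dimensional of length exactly $3$, and since each $C_i\supseteq S$, whenever $\ell\not\subseteq C_i$ the restriction $c_i|_\ell$ is a nonzero cubic form on $\ell\cong\PP^1$ vanishing on the fixed length-$3$ subscheme $\ell\cap S$. As the space of cubic forms on $\PP^1$ vanishing along a fixed length-$3$ scheme is one-dimensional, all the $c_i|_\ell$ are proportional; hence $\varphi$ contracts $\ell$ to a point and $\tau$ contracts $\tilde\ell$, that is, $H'\cdot\tilde\ell=0$. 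Since $\sigma_\ast\tilde\ell=\ell$, the projection formula gives $H\cdot\tilde\ell=\deg\ell=1$, and from $H'=3H-E$ I then read off $E\cdot\tilde\ell=3$ and $K_X\cdot\tilde\ell=(-4H+E)\cdot\tilde\ell=-1<0$.

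Next I would locate $[\tilde\ell]$ in $\overline{NE}(X)$. Both $H=\sigma^\ast\ko_{Q^4}(1)$ and $H'=\tau^\ast\ko_Z(1)$ are nef, being pullbacks of ample classes along the morphisms $\sigma$ and $\tau$; neither is ample, since $\sigma$ contracts the fibres $f$ of $E\to S$ (for which $H\cdot f=0$, $E\cdot f=-1$) and $\tau$ contracts $\tilde\ell$. As $H$ and $H'$ are non-proportional nef classes on the boundary of the two-dimensional strongly convex cone $\mathrm{Nef}(X)$, they generate it, so dually $\overline{NE}(X)$ has exactly two extremal rays: one supported by $H$ and generated by $[f]$, and $R:=(H')^\perp\cap\overline{NE}(X)$ supported by $H'$. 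Since $[\tilde\ell]\neq 0$ satisfies $H'\cdot\tilde\ell=0$ while $H\cdot\tilde\ell=1\neq 0$, the class $[\tilde\ell]$ lies in $R$ and not in the $H$-supported ray; hence $R=\RR_{\geq 0}[\tilde\ell]$, and the same conclusion holds for every trisecant. By the previous paragraph $K_X\cdot R<0$, so $R$ is a $K_X$-negative extremal ray, and $H'$ is a good (nef) supporting divisor with $(H')^\perp\cap\overline{NE}(X)=R$.

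Finally I would identify $\tau$ with the contraction $\mathrm{cont}_R$. The Contraction Theorem associates to the $K_X$-negative extremal ray $R$ a morphism with connected fibres onto a normal variety contracting precisely the curves whose class lies in $R$, and such a contraction is unique. Now $\tau:X\to Z$ contracts a curve $C$ exactly when $H'\cdot C=0$, i.e.\ exactly when $[C]\in R$; its target $Z$ is normal, being a quadric of rank $\geq 3$ by Proposition \ref{S induces a special birational transformation onto a quadric} together with \cite[Ch.\ II, Proposition~8.23 (b)]{hartshorne}; and $\tau$ is birational onto a normal variety, so $\tau_\ast\ko_X=\ko_Z$ and its fibres are connected. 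By uniqueness of the contraction of a $K_X$-negative extremal ray, $\tau=\mathrm{cont}_R$. The main obstacle I anticipate is making the identity $H'\cdot\tilde\ell=0$ fully rigorous through the restricted cubic system, namely justifying that the forms $c_i|_\ell$ all vanish on the same length-$3$ scheme $\ell\cap S$ and are therefore proportional; once this is secured, the $\rho=2$ cone analysis and the uniqueness of the contraction are essentially formal.
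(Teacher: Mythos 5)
Your proposal is correct and takes essentially the same route as the paper: both arguments rest on Proposition \ref{true trisecants} to obtain $H'\cdot\tilde{\ell}=0$ and $K_X\cdot\tilde{\ell}=-1$, then exploit the two-dimensional space $N_1(X)_{\RR}=\RR l\oplus \RR f$ to show that every curve contracted by $\tau$ has class on the ray $R$, so that $R$ is extremal with good supporting divisor $H'$ and $\tau$ is its contraction. The only local difference is that the paper computes the class $[\tilde{\ell}]=l-3f$ directly (the $E$-degree being the length of $\ell\cap S$) and evaluates $H'=3H-E$ on it, whereas you deduce that $\varphi$ contracts $\ell$ from the one-dimensionality of the space of cubic forms on $\PP^1$ vanishing on a fixed length-three subscheme and then recover $E\cdot\tilde{\ell}=3$; this is an equivalent and equally valid derivation of the same numerical facts.
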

\begin{proof}
    For $Q^4$ the numerical and rational equivalence relations on algebraic cycles coincide and we have
    \[CH^{\ast}(Q^4)=\frac{\ZZ [H,P]}{\langle H^3-2HP, P^2-H^2 P\rangle}.\]
    Here $H$ is the hyperplane class and $P$ is a plane from one of the two rulings of the quadric $Q^4$. The grading is $1$ for $H$, and $2$ for $P$. Since $HP$ can be represented by the class of a line $l$ in $Q^4$, we have $N_1(Q^4)=CH^3(Q^4)=\frac{\langle H^3,HP\rangle}{\langle H^3-2HP\rangle}=\ZZ l$.

    Let $f$  be a fiber in the exceptional divisor $E$ of the blow-up $\sigma: X\rightarrow Q^4$. We know that $\sigma $ is the contraction morphism induced by the extremal ray spanned by $f$ in $\overline{NE}(X)$. Hence, if we denote the pullback of $l$ under $\sigma$ by $l$ again, we have $N_1(X)_{\RR}=\RR l\oplus \RR f$. 

    If $\ell$ is a trisecant line of $S$, then the numerical class of its strict transform $\tilde{\ell}$ under $\sigma$ is $l-3f$ by Proposition \ref{true trisecants}. We define $R$ to be the ray $\RR ^+ [\tilde{\ell}]\subseteq \overline{NE}(X)$. Observe that $R$ is $K_X $-negative since $X$ is Fano, or simply because $K_X(l-3f) =(-4H+E)(l-3f)=-1$. Also, the ray $R$ is contracted by $\tau$ since $H'(l-3f)=(3H-E)(l-3f)=0$. Moreover, if a curve in $X$ has class $al+bf$, $a,b\in \RR$, $a\geq 0$, and is contracted by $\tau$, then $H'(al+bf)=(3H-E)(al+bf)=3a+b=0$, implying that its numerical class is $a(l-3f)\in R$. In particular, $R$ is an extremal ray and $\tau$ is the contraction morphism induced by $R$. The statements about $R$ and $H'$ are clear.
\end{proof}

\begin{cor}\label{trisec=Exc(R)}
The morphism $\tau$ is a divisorial contraction induced by the extremal ray $R$ and its exceptional locus $Exc(R)$ is the strict transform $\widetilde{\text{Trisec}(S)}$ of the trisecant variety ${\text{Trisec}(S)}$ of $S$ under $\sigma$. 
\end{cor}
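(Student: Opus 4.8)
We must show that $\tau$ is a divisorial contraction and that its exceptional locus equals $\widetilde{\text{Trisec}(S)}$. By Proposition \ref{tau is contraction of extremal ray} we already know that $\tau$ is the contraction morphism of the $K_X$-negative extremal ray $R=\RR^+[\tilde\ell]$ and that $Z:=\tau(X)$ is a four-dimensional quadric; since $\dim Z=\dim X=4$, the contraction $\tau$ is birational and hence of fiber type nowhere, so $Exc(R)$ is a proper closed subset of $X$. The plan is to identify $Exc(R)$ with the locus swept out by the strict transforms of trisecants and then check it is a divisor.

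\begin{proof}
By Proposition \ref{tau is contraction of extremal ray}, $\tau$ is the contraction of the $K_X$-negative extremal ray $R$, and $Z=\tau(X)$ is four-dimensional, so $\tau$ is birational; in particular it is not of fiber type, and its exceptional locus $Exc(R)$ is a proper closed subset of $X$. A curve $\Gamma\subseteq X$ is contracted by $\tau$ if and only if $[\Gamma]\in R$, and by the argument in the proof of Proposition \ref{tau is contraction of extremal ray} every such class is a positive multiple of $l-3f$. We first claim that the curves whose class lies in $R$ are exactly the strict transforms of trisecant lines of $S$ (together with the strict transforms of lines contained in $S$, which by definition of $\text{Trisec}(S)$ also lie in the trisecant variety). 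Indeed, if $\ell$ is a trisecant then by Proposition \ref{true trisecants} the intersection $\ell\cap S$ has length exactly $3$, so $[\tilde\ell]=l-3f\in R$; conversely, the image $\sigma(\Gamma)$ of an irreducible curve contracted by $\tau$ is a line in $Q^4$ meeting $S$ with length $3$, hence either a trisecant or a line inside $S$. Therefore $Exc(R)=\bigcup\{\tilde\Gamma : [\tilde\Gamma]\in R\}$ is precisely the union of the strict transforms of the lines constituting $\text{Trisec}(S)$, which is $\widetilde{\text{Trisec}(S)}$.

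It remains to show the contraction is divisorial, i.e.\ that $Exc(R)=\widetilde{\text{Trisec}(S)}$ has codimension $1$ in $X$. On the one hand, by Proposition \ref{dim Trisec >= 3} we have $\dim\text{Trisec}(S)\geq 3$, and since $\sigma$ is birational the strict transform satisfies $\dim\widetilde{\text{Trisec}(S)}\geq 3$, so $Exc(R)$ has dimension at least $3$. On the other hand, a birational extremal contraction of a smooth fourfold cannot have exceptional locus of dimension $4$ (that would force it to be an isomorphism or of fiber type), so $\dim Exc(R)\leq 3$. Combining the two bounds gives $\dim Exc(R)=3$, whence $\tau$ is a divisorial contraction and $Exc(R)=\widetilde{\text{Trisec}(S)}$ is an irreducible divisor, consistent with $E'=\tau^{-1}(\mathfrak B')$ being irreducible as recorded in the discussion preceding Proposition \ref{equations in PicX}.
\end{proof}

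The step I expect to be the main obstacle is the set-theoretic identification of contracted curves with trisecants: the forward direction (trisecants give class $l-3f$) is immediate from Proposition \ref{true trisecants}, but the converse requires knowing that \emph{every} irreducible curve numerically proportional to $l-3f$ is the strict transform of an actual line of $Q^4$ rather than some higher-degree curve meeting $E$ with the right total intersection numbers. The cleanest way to secure this is to use that $\sigma$ pushes such a curve forward to a curve $C'\subseteq Q^4$ with $H\cdot C'=1$ and $E\cdot\tilde C'=3$, so $C'$ is a line meeting $S$ in a length-$3$ scheme; one then invokes the definition of $\text{Trisec}(S)$ directly. Care is needed to treat degenerate configurations (tangent trisecants, or lines in $S$) uniformly, but these are already covered by the definition of the trisecant variety.
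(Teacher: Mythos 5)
Your second paragraph is, in substance, the paper's entire proof: one only needs the containment $\widetilde{\text{Trisec}(S)}\subseteq Exc(R)$ (from Proposition \ref{true trisecants}, strict transforms of trisecants have class $l-3f\in R$, hence are contracted by $\tau$), and then the sandwich $3\leq \dim\widetilde{\text{Trisec}(S)}\leq \dim Exc(R)\leq 3$ coming from Proposition \ref{dim Trisec >= 3} and the birationality of $\tau$, to conclude that $Exc(R)=\widetilde{\text{Trisec}(S)}$ is a prime divisor. Had you stopped there, your argument would essentially match the paper's.

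The genuine problem is the stronger claim in your first paragraph, which you flag yourself as the delicate point: that every irreducible curve contracted by $\tau$ has $\sigma$-image a line, so that $Exc(R)$ is \emph{precisely} the union of strict transforms of trisecants. This is unjustified and in fact false in general. A contracted irreducible curve $\Gamma$ only satisfies $[\Gamma]=a(l-3f)$ for some integer $a\geq 1$, so $H\cdot\sigma_{\ast}\Gamma=a$ need not be $1$; your proposed repair assumes exactly $a=1$. Concretely, the paper later shows (Proposition \ref{geometric constraints on S}, Remark \ref{remark}) that $\tau$ can have a two-dimensional fiber $F\cong\PP^2$ with $\ko_X(H)_{|F}=\ko_{\PP^2}(1)$; a conic in such an $F$ is contracted, has class $2(l-3f)\in R$, and maps under $\sigma$ to a plane conic meeting $S$ in a length-$6$ scheme, not to a trisecant line. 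Fortunately the corollary never needs this converse --- the containment direction suffices, as above. A second, smaller issue: passing from $\dim Exc(R)=3$ to the set-theoretic equality requires $Exc(R)$ to be irreducible, and your appeal to the irreducibility of $E'$ from the discussion preceding Proposition \ref{equations in PicX} is circular here, since that discussion presupposes a special self-birational map between two smooth quadrics, which is exactly what this section is constructing ($Z$ is at this stage only a possibly singular quadric). Irreducibility should instead be extracted from the extremal contraction itself (relative Picard rank $1$ plus the negativity lemma rules out two divisorial components); the paper leaves this implicit as well, but your explicit citation points to an inapplicable statement. Deleting the false biconditional and sourcing irreducibility correctly turns your proof into the paper's.
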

\begin{proof}
    Evidently $\widetilde{\text{Trisec}(S)}$ is contained in the exceptional locus $Exc(R)$ of the extremal ray $R$. In particular, $3\leq dim(\widetilde{\text{Trisec}(S)})\leq dim(Exc(R))\leq 3$ by Proposition \ref{dim Trisec >= 3} and by the fact that $\tau$ is birational. Hence, $Exc(R)=\widetilde{\text{Trisec}(S)}$ is a prime divisor of $X$. 
\end{proof}

We denote $T:=\tau(Exc(R))\subseteq Z$.

\begin{pr}\label{T is a surface}
    $T$ is irreducible of dimension $2$.
\end{pr}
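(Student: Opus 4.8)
The plan is to reduce the whole statement to a single intersection-number computation on $X$. Irreducibility is immediate: by Corollary \ref{trisec=Exc(R)} the locus $Exc(R)=\widetilde{\text{Trisec}(S)}$ is a prime divisor, so its image $T=\tau(Exc(R))$ under the morphism $\tau$ is irreducible. The upper bound $\dim T\le 2$ is also formal: $Exc(R)$ is the exceptional divisor of the birational morphism $\tau$, hence is contracted, and every fiber of $\tau|_{Exc(R)}$ is positive-dimensional (it contains a curve of class in the ray $R$), so $\dim T\le \dim Exc(R)-1=2$.

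The real content is the lower bound $\dim T\ge 2$, and the first thing I would do is pin down the class of $Exc(R)$ in $Pic(X)=\ZZ H\oplus\ZZ E$. This is the step I expect to be the main obstacle, precisely because I do not yet know that $Z$ is smooth, so I cannot invoke the numerical machinery of Section \ref{section special bir trans} (which assumed a smooth quadric target) to read off that the inverse has center of dimension $r'=2$. I would bypass this using that $Z$ is a quadric \emph{hypersurface} in $\PP^5$, normal by Proposition \ref{S induces a special birational transformation onto a quadric}, and therefore Gorenstein with $\omega_Z=\ko_Z(-4)$, so that $\tau^\ast K_Z=-4H'$. Then the relative canonical class $K_X-\tau^\ast K_Z=(-4H+E)-(-12H+4E)=8H-3E$ restricts to $0$ on $X\setminus Exc(R)\cong Z\setminus T$, hence lies in $\ZZ\cdot[Exc(R)]$; since $8H-3E$ is primitive and $Exc(R)$ is effective, I conclude $[Exc(R)]=8H-3E$ (the discrepancy being $1$).

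With the class in hand I would compute the $0$-cycle $H\cdot H'^2\cdot Exc(R)$ using $H'=3H-E$ together with the intersection numbers of Lemma \ref{intersection theory} (case $k=4$), evaluated at $\mathfrak d=10$ and $H_SK_S=2$ as provided by Propositions \ref{equationsofS} and \ref{S is a blow of a K3}; this yields $H\cdot H'^2\cdot Exc(R)=10\neq 0$. To finish, suppose for contradiction that $\dim T\le 1$. Then $H\cdot Exc(R)$ is a $2$-cycle all of whose components map into $T$, so $\tau_\ast(H\cdot Exc(R))=0$; since $H'^2=\tau^\ast(\ko_Z(1)^2)$, the projection formula gives
\[
H\cdot H'^2\cdot Exc(R)=\ko_Z(1)^2\cdot\tau_\ast\!\left(H\cdot Exc(R)\right)=0,
\]
contradicting the value $10$ just computed. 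Hence $\dim T\ge 2$, and together with the upper bound this shows $T$ is irreducible of dimension $2$. (As a by-product this identifies the degree of $T$ in $Z\subseteq\PP^5$ as $10$, which is the natural first indication that $T$ will again be a surface of type $^{II}Z_E^{10}$.)
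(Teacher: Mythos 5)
Your proof is correct and follows essentially the same route as the paper: the paper likewise pins down $Exc(R)=8H-3E$ from $K_X=\tau^{\ast}K_Z+\lambda\, Exc(R)$ (fixing $\lambda=1$ by intersecting with the ray $R$ rather than by your primitivity-plus-effectivity argument) and then gets $\dim T=2$ from a nonvanishing intersection number, with irreducibility inherited from $Exc(R)$. The only real difference is cosmetic: the paper computes $H'^2\,Exc(R)^2=(3H-E)^2(8H-3E)^2=-10\neq 0$ and cites \cite[Proposition~2.7]{beltrametti.notnef}, which encapsulates precisely the projection-formula mechanism you write out by hand for $H\cdot H'^2\cdot Exc(R)=10\neq 0$; both values check out against Lemma \ref{intersection theory}.
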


\begin{proof}
    The dualizing sheaf of $Z$ is the line bundle $\ko_Z(-4)$, so the canonical divisor $K_Z$ is Cartier. Therefore, we can write $K_X=\tau ^{\ast} K_Z +\lambda Exc(R)$ in $Pic(X)$, $\lambda\in \ZZ$, for $\tau$ is an isomorphism outside $Exc(R)$. In fact, we have $\lambda=1$, since $K_X R=-1<0$, $Exc(R)R<0$, and $\tau^{\ast}Pic(Z)=ker(\times R:Pic(X)\rightarrow \ZZ)$. Thus, $-4H+E=K_X=\tau ^{\ast} K_Z +Exc(R)=-4H'+Exc(R)$, which gives $Exc(R)=8H-3E$ in $Pic(X)$. Now we compute $H'^2{Exc(R)}^2=(3H-E)^2(8H-3E)^2=-10\neq 0$ using Lemma \ref{intersection theory}. This implies that $T$ has dimension $2$ by \cite[Proposition~2.7]{beltrametti.notnef}. Finally, $T$ is irreducible since $Exc(R)$ is.
\end{proof}

If the morphism $Exc(R)\rightarrow T$ is equidimensional (all fibers are of dimension $1$), then Proposition \ref{T is a surface} implies that both $Z$ and $T$ are smooth by the well-known result of T. Ando \cite[Theorem~2.3]{ando}. Nevertheless, it may very well happen that fiber dimension is non-constant; examples of this phenomenon are abundant. In any case, we know that if there exists a fiber of dimension $2$ then it is necessarily isolated by Proposition \ref{T is a surface}. In fact, it is certain that two-dimensional fibers exist over singular points of $Z$ or $T$ by \cite[Theorem~4.1]{AW_AView}. Of course, then the singularities of $Z$ and $T$ are isolated. This implies that $Z$ can have a nodal singularity at worst. 

Fortunately, a classification of isolated two-dimensional fibers in Fano-Mori divisorial contractions of fourfolds is given by M. Andreatta and J. A. Wi\'sniewski in \cite[p.~3, Theorem]{AWI}. According to their classification, a special fiber $F=\tau ^{-1}(z)$, $z\in Z$, is either isomorphic to a plane or to a quadric in $\PP^3$ of rank at least $2$. If $F$ is isomorphic to a plane, the conormal bundle $N^{\ast}_{F/X}$ can be identified with either $\frac{T(-1)\oplus \ko(1) }{\ko}$ or $\frac{\ko^{\oplus 4}}{\ko(-1)^{\oplus 2}}$. Moreover, in the first case $z$ is a node of $Z$, but a smooth point of $T$, while in the second case $z$ is a smooth point of $Z$, but a singular point of $T$ isomorphic to the vertex of a cone over a rational twisted cubic curve. If $F$ is isomorphic to a quadric, the conormal bundle $N^{\ast}_{F/X}$ can be identified with the restriction of $\mathcal{E}(1)=\mathcal{E}^{\ast}$, where $\mathcal{E}$ is the spinor bundle of $Q^{4}$, and $z$ is a smooth point of $Z$, but a non-normal singular point of $T$. 

Therefore, in order to show that $Z$ is smooth it suffices to discard the existence of plane fibers in $Exc(R)\rightarrow T$. We give a lattice-theoretic condition on the Picard group of the K3 surface $S_0$ that prevents this situation. Observe, however, that $Z$ could be smooth even if there exist plane fibers, although $T$ would necessarily be singular then.

\begin{pr} \label{geometric constraints on S} Let $S_0$ be the minimal model of $S$ in $\PP^7$ and let $H_{S_0}$ be the hyperplane class of $S_0$. If $Exc(R)\rightarrow T$ has a plane fiber, then there exists a smooth elliptic curve $\mathfrak{E}\subseteq S_0$ of degree $5$ such that the sublattice $\left\langle H_{S_0}, \mathfrak{E}\right\rangle\subseteq Pic(S_0)$ is saturated and the points to which $d$ and $d'$ are contracted are contained in a single (possibly singular) fiber of the associated elliptic fibration of $S_0$. More generally, if $Exc(R)\rightarrow T$ is not equidimensional, then $rk(Pic(S_0))>1$.
\end{pr}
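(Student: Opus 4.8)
The plan is to start from a hypothetical plane fibre $F\cong\PP^2$ of $\tau|_{Exc(R)}\colon Exc(R)\to T$ and translate it into an honest plane cubic on $S$, then push this curve down to the K3 surface $S_0$. First I would determine how $\sigma$ acts on $F$. Since $\tau$ contracts $F$, the relation $H'=3H-E$ gives $(3H-E)|_F=0$, so $E|_F=3H|_F$ in $Pic(F)$. Writing $H|_F=a\,h$ with $h$ the line class, adjunction $K_F=K_X|_F+c_1(N_{F/X})$ together with either of the two conormal bundles supplied by the Andreatta--Wi\'sniewski classification yields $c_1(N_{F/X})=-2h$, hence $a=1$. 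Thus $\sigma|_F$ is given by $|h|$ and maps $F$ isomorphically onto a plane $P\subseteq Q^4$, while $E|_F=\ko_{\PP^2}(3)$ shows that $\Gamma:=E\cap F$ is a plane cubic. Consequently $\Gamma_0:=\sigma(\Gamma)=P\cap S$ is a (possibly singular) plane cubic curve on $S$ of degree $H_S\cdot\Gamma_0=3$ and arithmetic genus $1$; the apparent clash with the intersection number $S\cdot P=5$ is resolved by excess intersection along $\Gamma_0$.

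Next I would pass to $S_0$. Let $b_S\colon S\to S_0$ contract $d,d'$ to $p,p'$, set $m:=\Gamma_0\cdot d$, $m':=\Gamma_0\cdot d'$, and $\overline{\Gamma}:=b_{S*}\Gamma_0$, so that $\overline{\Gamma}$ has multiplicities $m,m'$ at $p,p'$. Using $b_S^{\ast}H_{S_0}=H_S+d+d'$ and $K_S=d+d'$ one computes $H_{S_0}\cdot\overline{\Gamma}=3+m+m'$ and, from $p_a(\Gamma_0)=1$, that $\Gamma_0^2=-(m+m')$ and hence $\overline{\Gamma}^{\,2}=m(m-1)+m'(m'-1)$. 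The value $m+m'$ is pinned down by intersection theory on $X$: since $E|_F=3h$ we have $F\cdot E^2=(E|_F)^2=9$, whereas expanding with the relation $E^2=-5H^2+d+d'+4H_S$ of Proposition \ref{intersection of codim 2 cycles in k3-33} gives $F\cdot E^2=-5+m+m'+12=7+m+m'$ (here $F\cdot d$ and $F\cdot d'$ equal $m,m'$ because $\sigma|_F$ is an isomorphism carrying $\Gamma\cap E_d$ to $\Gamma_0\cap d$). Therefore $m+m'=2$, so $\overline{\Gamma}$ has degree $5$.

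To conclude that $\overline{\Gamma}$ is elliptic I must upgrade $m+m'=2$ to $m=m'=1$, ruling out $(m,m')=(2,0)$. Here I would use that each $(-1)$-line is a genuine line of $\PP^5$ not contained in the plane $P$: if $d\not\subseteq P$ then $d\cap P$ is at most one point $q$, and since $d$ is a line with $T_qd\not\subseteq T_qP\supseteq T_q\Gamma_0$ the curves $d$ and $\Gamma_0$ meet transversally at a smooth point, forcing $m\le 1$ (and likewise $m'\le1$). Combined with $m+m'=2$ this gives $m=m'=1$, so $\overline{\Gamma}^{\,2}=0$ and $p_a(\overline{\Gamma})=1$. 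As $H_{S_0}\cdot\overline{\Gamma}=5$ is prime, the class of $\overline{\Gamma}$ is primitive, so $|\overline{\Gamma}|$ defines an elliptic pencil whose general fibre is a smooth elliptic curve $\mathfrak{E}$ of degree $5$; both $p$ and $p'$ lie on the single (possibly singular) fibre $\overline{\Gamma}$. I expect the delicate point to be exactly this step: excluding $d\subseteq P$ (equivalently, a $(-1)$-line appearing as a component of $\Gamma_0$) and the non-transverse configurations at singular points of $\Gamma_0$, which is where the special position of the two $(-1)$-lines relative to $P$ really matters.

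For saturation and the general statement I would finish as follows. The fibre class $\mathfrak{E}$ is primitive in $Pic(S_0)$; a proper even overlattice of $\left\langle H_{S_0},\mathfrak{E}\right\rangle$ (Gram determinant $-25$) would have index $5$ and determinant $-1$, hence be isometric to the hyperbolic plane $U$, and one checks directly that $H_{S_0}^2=12$, $H_{S_0}\cdot\mathfrak{E}=5$, $\mathfrak{E}^2=0$ force $\mathfrak{E}$ to be non-primitive in any such $U$, contradicting primitivity. Thus $\left\langle H_{S_0},\mathfrak{E}\right\rangle$ is saturated. Finally, for the more general claim, any non-equidimensional fibre is $2$-dimensional and, by Andreatta--Wi\'sniewski, a plane or a quadric; the plane case produces the degree-$5$ curve $\mathfrak{E}$ above, and one finds by the same mechanism in the quadric case an effective curve on $S_0$ that is likewise not a multiple of $H_{S_0}$. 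Either way $Pic(S_0)$ acquires a class independent of $H_{S_0}$, so $rk\,Pic(S_0)>1$; equivalently, if $rk\,Pic(S_0)=1$ then $Exc(R)\to T$ is equidimensional and, by Ando's theorem, $Z$ and $T$ are smooth.
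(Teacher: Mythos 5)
Your setup and bookkeeping are fine, and two of your shortcuts are legitimate variants of the paper's computations: you obtain $m+m'=2$ from $F\cdot E^2=9$ together with the relation $E^2=-5H^2+d+d'+4H_S$ (whose derivation only uses that $X=Bl_S(Q^4)$ with $S$ of type $Z_E^{10}$, so it is available here), where the paper instead computes $S\cdot \sigma(F)=\left\{c({N_{S/Q^4}}_{|\mathcal{C}})\frown s(\mathcal{C},\sigma(F))\right\}_0=(H_S+K_S)\mathcal{C}=5$ by Fulton's excess intersection formula; and your saturation argument via the even unimodular rank-$2$ overlattice (necessarily $U$) replaces the paper's congruence computation with $5\zeta=\alpha H_{S_0}+\beta\mathfrak{E}$. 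Both routes give the same numbers.

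The genuine gap is exactly where you suspected it: the exclusion of $(m,m')=(2,0)$. Your transversality argument only shows that the tangent line of $d$ at $q=d\cap P$ is not contained in $T_qP$; this bounds the local intersection multiplicity by $1$ only when $q$ is a \emph{smooth} point of $\Gamma_0$. If $q$ is a singular point of the (possibly reducible or non-reduced) plane cubic --- say a node --- then $(d\cdot\Gamma_0)_q\geq \mu_q(\Gamma_0)=2$ even though $T_qd\not\subseteq T_qP$, and no local consideration rules this configuration out; you also never actually exclude $d\subseteq P$, i.e.\ $d$ a component of $\Gamma_0$. The paper's exclusion is global rather than local: $(m,m')=(2,0)$ forces the pushed-down class $\mathcal{D}$ to satisfy $\mathcal{D}^2=2$ with $H_{S_0}\mathcal{D}=5$, whence $(H_{S_0}-2\mathcal{D})^2=0$ and $H_{S_0}(H_{S_0}-2\mathcal{D})=2$, so restricting $\ko_{S_0}(H_{S_0}-2\mathcal{D})$ to smooth curves in $|H_{S_0}|$ would produce a $g^1_2$ on canonically embedded genus-$7$ curves, which is absurd; some such global input is unavoidable. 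A second, unflagged gap: from $\overline{\Gamma}^2=0$, $p_a(\overline{\Gamma})=1$ and primitivity you conclude that $|\overline{\Gamma}|$ is an elliptic pencil with smooth general fibre, but this requires the class to be \emph{nef}, which is not automatic precisely because $\Gamma_0$ may be reducible or non-reduced; the paper devotes a long case-by-case analysis of the decompositions $3\mathfrak{l}_1$, $2\mathfrak{l}_1+\mathfrak{l}_2$, $\mathfrak{l}_1+\mathfrak{l}_2+\mathfrak{l}_3$, $\mathfrak{l}_1+\mathfrak{c}$, $d+\cdots$ to establish nefness before invoking Huybrechts. (Smaller omissions: primitivity is not just ``$5$ is prime'' --- the case $\overline{\Gamma}=5\mathcal{D}'$ with $H_{S_0}\mathcal{D}'=1$, $\mathcal{D}'^2=0$ must be excluded using base-point-freeness of the ample $H_{S_0}$; and in the non-equidimensional statement the irreducible-quadric fibre needs its own computation, namely $(H_S+K_S)\mathcal{C}=10$ for the sextic $\mathcal{C}$, so $K_S\mathcal{C}=4$, and then $rk\,Pic(S_0)=1$ gives the unsolvable system $10u+v+w=6$, $2u-v-w=4$ --- you only assert this case.)
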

\begin{rem}\label{remark}
    Using \texttt{Macaulay2} it is possible to construct an explicit example of a K3 surface of degree $12$ in $\PP^7$ containing a smooth elliptic curve of degree $5$ such that its Picard group is generated by the hyperplane class and the class of the elliptic curve. Moreover, a random projection from two of its points induces a surface of type $^{II} Z_{E}^{10}$ in $\PP^5$.
\end{rem}
\begin{proof}
    Assume that there exists a two-dimensional fiber $F=\tau ^{-1}(z)$, $z\in T\subseteq Z$. Let $F_1$ be any irreducible component of $F$. Observe that the restricted morphism $\sigma_{|_{F_1}}: F_1\rightarrow \sigma(F_1)$ is finite, since $\{0\}=\RR ^{+} [l-3f] \cap \RR^{+}[f]\subseteq \overline{NE}(X)$. Hence, $F_1\nsubseteq E$. Indeed, otherwise we would have a surjective morphism $\sigma_{|_{F_1}}: F_1\rightarrow S$ exhibiting $S_0$ as an unirational surface. This is not the case. Therefore, the morphism $\sigma_{|_{F_1}} : F_1\rightarrow \sigma(F_1)$ is birational.

\textit{Case $1$}. $F\cong \PP^2$.

    Let us prove that $\sigma(F)\subseteq Q^4$ is a plane that intersects $S$ in a cubic curve. Indeed, we compute $\ko_X (4H-E)_{|F}={\ko_X (-K_X)}_{|F}=\ko_F (-K_F )\otimes det(N_{F/X})=\ko_{\PP^2}(3)\otimes \ko_{\PP^2}(-2)=\ko_{\PP^2}(1)$ and $\ko_X (3H-E)_ {|F}=0$, since $F$ is contracted to a point under $ \tau=|3H-E|$. Hence, we have $\ko_X (H)_{|F}=\ko_{\PP^2}(1)$ and $\ko_X (E)_{|F}=\ko_{\PP^2}(3)$. This implies that $\sigma (F)$ has degree $1$, i.e., it is a plane. Thus, $\sigma_{F}$ is an isomorphism. In particular, the intersection $\mathcal{C}:=S\cap \sigma(F)\cong E\cap F$ is a cubic in the plane $\sigma(F)$. 

    Now we compute the degree of the image of the cubic $\mathcal{C}$ under the blow-up map $b_S: S\rightarrow S_0\subseteq \PP^7$ induced by the complete linear system $|H_S+K_S |$ on $S$. First observe that the Segre class of $\mathcal{C}$ in $\sigma(F)$ is $s(\mathcal{C},\sigma(F))=1-3H_{\mathcal{C}}t$, where we denote by $H_{\mathcal{C}}$ the restriction of $H$ to $\mathcal{C}$. On the other hand, the total Chern class of the normal bundle of $S$ in $Q^4$ is $c(N_{S/Q^4})=1+ (K_S +4H_S) t + 50 t^2$ by Proposition \ref{total chern class of normal bundle of S}. Let $(S\cdot \sigma(F))^{\mathcal{C}}$ be the equivalence of $\mathcal{C}$ for the intersection $S\cdot \sigma(F)$ (see \cite[Ch. 9]{Fulton} for the definition of equivalence). Since $\mathcal{C}$ is connected, we have $S\cdot \sigma(F)=(S\cdot \sigma(F))^{\mathcal{C}}
    =\left\{c ({N_{S/Q^4}}_{|\mathcal{C}})\frown s(\mathcal{C},\sigma(F))\right\}_0$ by \cite[Proposition~9.1.1]{Fulton} and \cite[Proposition~9.1.2]{Fulton}. Clearly,
    \begin{align*}
        c ({N_{S/Q^4}}_{|\mathcal{C}})\frown  (1-3H_{\mathcal{C}}t) &=c ({N_{S/Q^4}}_{|\mathcal{C}})\frown(1-3H_{S}t)_{|\mathcal{C}}\\
        &=[c ({N_{S/Q^4}})\frown (1-3H_{S}t)]_{|\mathcal{C}}\\
        &=[(1+ (K_S +4H_S) t + 50 t^2)(1-3H_{S}t)]_{|\mathcal{C}}\\
        &=1+(H_{S}+{K_{S}})_{|\mathcal{C}}t.
    \end{align*}
    Therefore, 
    \begin{align*}
        5=(5P_1+5P_2)P_i &=S\cdot \sigma(F)\\
        &=\left\{c ({N_{S/Q^4}}_{|\mathcal{C}})\frown s(\mathcal{C},\sigma(F))\right\}_0\\
        &=(H_{S}+{K_{S}})\mathcal{C}.
    \end{align*}
    This implies that the degree of $b_S(\mathcal{C})\subseteq S_0$ is $5$. In particular, $K_{S}\mathcal{C}=2$, and thus $\mathcal{C}^2=-2$ by the adjunction formula, for plane cubics have arithmetic genus equal to $1$. 
    
    Write $\mathcal{C}=b_S^{\ast}(\mathcal{D})+x d+y d^{\prime}$ in $Pic(S)$, where $\mathcal{D}\in Pic(S_0)$ and $x, y\in \ZZ$. Thus, $\mathcal{D}={b_S}_{\ast}(\mathcal{C})=b_S(\mathcal{C})$ in $Pic(S_0)$. Then $2=K_S \mathcal{C}=\left(d+d^{\prime}\right)\left(b_S^{\ast}(\mathcal{D})+x d+y d'\right)=$ $-x-y$ and $-2=\mathcal{C}^2=\left(b_S^{\ast}(\mathcal{D})+x d+y d'\right)^2=\mathcal{D}^2-x^2-y^2$. We want to find the possible pairs $(x,y)$ and compute $\mathcal{D}^2$. To begin with, notice that $d$ and $d^{\prime}$ cannot be components of the plane cubic $\mathcal{C}$ simultaneously, for they are disjoint. If neither $d$ nor $d^{\prime}$ are components of $\mathcal{C}$, then $d \mathcal{C}=-x$ and $d^{\prime} \mathcal{C}=-y$ are non-negative, implying $(x, y)=(- 1,-1)$, $(-2,0)$, or $(0,-2)$. However, if one of these lines is a component of $\mathcal{C} $, for example $d$, then $d'$ is not contained in the plane $\sigma(F)$. Clearly, $d^{\prime} \mathcal{C}=-y=0$, $1$, or $2$, which also gives $(x, y)=(- 1,-1)$, $(-2,0)$, or $(0,-2)$. We thus have $\mathcal{D}^2= 0$ or $2$. Assume $\mathcal{D}^2=2$. Since $H_{S_0}\mathcal{D}=5$, we have $\left(H_{S_0}-2 \mathcal{D}\right)^2=0$ and $H_{S_0}\left(H_{S_0}-2 \mathcal{D}\right)=2$. Then the restriction of $\ko_{S_0}\left(H_{S_0}-2 \mathcal{D}\right)$ to each smooth curve in $|H_{S_0}|$ defines a $g_{2}^1$. This is absurd, since the inclusion $S_0 \subseteq \mathbb{P}^7$ restricts to the canonical map on each smooth curve in $|H_{S_0}|$. Therefore, $\mathcal{D}^2=0$ and $d\mathcal{C}=d'\mathcal{C}=1$. 
    
    We claim that $\mathcal{D}$ is nef. To prove this, it suffices to verify that the class of any irreducible component of $b_S(\mathcal{C})$ has non-negative intersection with $\mathcal{D}$. This is certainly the case if $b_S(\mathcal{C})$ is a curve, for $\mathcal{D}^2=0$. Assume that $\mathcal{C}$ is reducible, but it contains no $(-1)$-line. Then the decomposition of $\mathcal{C}$ as an effective Cartier divisor in the plane $\sigma(F)$ is either $3 \mathfrak{l}_1$, $2 \mathfrak{l}_1+\mathfrak{l}_2$, $\mathfrak{l}_1+\mathfrak{l}_2 +\mathfrak{l}_3$, or $\mathfrak{l}_1+\mathfrak{c}$, where $\mathfrak{l}_1$, $\mathfrak{l}_2$, and $\mathfrak{l}_3$ are three pairwise distinct lines and $\mathfrak{c}$ is a smooth conic. If $\mathcal{C}=3 \mathfrak{l}_1$, then $-2=\mathcal{C}^2=9\mathfrak{l}_1^2$, which is not possible. If $\mathcal{C}=2 \mathfrak{l}_1+\mathfrak{l}_2$, then $d$ and $d'$ each intersect $\mathfrak{l}_2$ in a point away from $\mathfrak{l}_1$, so $\mathfrak{l}_2^2=-4$ by the adjunction formula. Hence, $-2=\mathcal{C}^2=(2\mathfrak{l}_1 +\mathfrak{l}_2)^2=4\mathfrak{l}_{1}^2$, which is also not possible. Therefore, $\mathcal{C}=\mathfrak{l}_1+\mathfrak{l}_2 +\mathfrak{l}_3$ or $\mathfrak{l}_1+\mathfrak{c}$. Observe that $d$ and $d'$ each intersect $\mathcal{C}$ transversely at a point away from the intersection points of its components. If $\mathcal{C}=\mathfrak{l}_1+\mathfrak{l}_2 +\mathfrak{l}_3$, then $\mathcal{D}={b_S}_{\ast}(\mathcal{C})=\mathfrak{r}_1+\mathfrak{r}_2+\mathfrak{r}_3$ in $Pic(S_0)$, where $\mathfrak{r}_1$, $\mathfrak{r}_2$, and $\mathfrak{r}_3$ are three smooth rational curves pair-wisely intersecting transversely at a point. This implies $\mathfrak{r}_i\mathcal{D}=0$. If $\mathcal{C}=\mathfrak{l}_1+\mathfrak{c}$, then $\mathcal{D}={b_S}_{\ast}(\mathcal{C})=\mathfrak{r}_1+\mathfrak{r}_2$ in $Pic(S_0)$, where $\mathfrak{r}_1$ and $\mathfrak{r}_2$ are two smooth rational curves such that $\mathfrak{r}_1\mathfrak{r}_2=2$. This implies $\mathfrak{r}_i\mathcal{D}=0$. Now assume that $\mathcal{C}$ is reducible and contains a $(-1)$-line, for example $d$. Then the decomposition of $\mathcal{C}$ as an effective Cartier divisor in the plane $\sigma(F)$ is either $3 d$, $2 d+\mathfrak{l}_1$, $ d+2 \mathfrak{l}_1$, $d+\mathfrak{l}_1+\mathfrak{l}_2$, or $d+\mathfrak{c}$, where $\mathfrak{l}_1$ and $\mathfrak{l}_2$ are distinct lines that are also different from $d$, and $\mathfrak{c}$ is a smooth conic. By intersecting with $d$ or $d'$ we can discard the first three cases. Therefore, $\mathcal{C}=d+\mathfrak{l}_1+\mathfrak{l}_2$ or $d+\mathfrak{c}$. Observe that $d'$ intersects $\mathcal{C}$ transversely at a point away from the intersection points of its components and away from $d$. If $\mathcal{C}=d+\mathfrak{l}_1+\mathfrak{l}_2$, then $\mathcal{D}={b_S}_{\ast}(\mathcal{C})=\mathfrak{r}_1+\mathfrak{r}_2$ in $Pic(S_0)$, where $\mathfrak{r}_1$ and $\mathfrak{r}_2$ are two smooth rational curves intersecting transversely at two points if $d$, $\mathfrak{l}_1$, and $\mathfrak{l}_2$ are not concurrent, or they are tangent at a single point otherwise. Thus, $\mathfrak{r}_1\mathfrak{r}_2=2$. This implies $\mathfrak{r}_i\mathcal{D}=0$. If $\mathcal{C}=d+\mathfrak{c}$, then $b_S(\mathcal{C})$ is a rational curve with an ordinary double point if $d$ intersects transversely $\mathfrak{c}$, or it is a rational curve with a cusp if $d$ is tangent to $\mathfrak{c}$, and thus there is nothing to do. The claim follows from the previous analysis by cases.
     
    We now claim that $\mathcal{D}$ is primitive in $Pic(S_0)$. Assume on the contrary that $\mathcal{D}=s\mathcal{D}'$ for some $\mathcal{D}'\in Pic(S_0)$ and $s>0$. Then $sH_{S_0}\mathcal{D}'=5$, so we have $s=1$ or $5$. If $s=5$, then we have $H_{S_0}\mathcal{D}'=1$ and $\mathcal{D}'^2=0$. This implies that $H_{S_0}$ is not base-point free, since $H_{S_0}$ is ample and $\mathcal{D}'$ is effective. This is absurd. Consequently $s=1$ and the claim follows. Notice that $\mathcal{D}$ being primitive in $Pic(S_0)$ is equivalent to $\left\langle \mathcal{D}\right\rangle$ being saturated in $Pic(S_0)$.

    We conclude that $\mathcal{D}$ is isotropic, nef and primitive. Hence, there exists a smooth elliptic curve $\mathfrak{E}\subseteq S_0$ such that $\mathfrak{E}=\mathcal{D}$ in $Pic(S_0)$ by \cite[Ch. 2, Proposition~3.10]{Huybrechts_K3}. In particular, $b_S(\mathcal{C})$ is a (possibly singular) fiber of the associated elliptic fibration of $S_0$ and $d$ and $d'$ are contracted to points in it.
    
    Lastly, let us show that $\left\langle H_{S_0}, \mathfrak{E}\right\rangle$ is a saturated sublattice of $Pic(S_0)$. Indeed, let $M$ be the saturation of $\left\langle H_{S_0}, \mathfrak{E}\right\rangle$ in $Pic(S_0)$. Then $5^2=disc(\left\langle H_{S_0},\mathfrak{E}\right\rangle)=|disc(M)|[M:\left\langle H_{S_0}, \mathfrak{E}\right\rangle]^2$. Hence, $[M:\left\langle H_{S_0}, \mathfrak{E}\right\rangle]=1$ or $5$. Assume that the index is $5$. Then there exists $ \zeta \in M$ such that $\zeta\notin \left\langle H_{S_0}, \mathfrak{E}\right\rangle$, but $5\zeta \in \left\langle H_{S_0}, \mathfrak{E}\right\rangle$. Write $5\zeta =\alpha H_{S_0}+ \beta \mathfrak{E}$, where $\alpha,\beta\in \ZZ$. Then $25\zeta^2= 12\alpha ^2 +10\alpha\beta$, so we have $5\vert \alpha$. Hence, $\eta:=\zeta-\frac{\alpha}{5}H_{S_0}\in M$ satisfies $5\eta =\beta \mathfrak{E}$. Then $\eta\in \left\langle \mathfrak{E}\right\rangle$, implying $\zeta\in \left\langle H_{S_0}, \mathfrak{E}\right\rangle$. This is a contradiction. Therefore $[M:\left\langle H_{S_0}, \mathfrak{E}\right\rangle]=1$ and the claim follows.
   
\textit{Case $2$}. $F\cong V_2 \subseteq \PP^3$, where $V_2$ is a quadric of rank at least $2$.

    Let us prove that $\sigma(F)\subseteq Q^4$ is a quadric of the same type as $F$ that intersects $S$ in a sextic. Indeed, we compute $\ko_X (4H-E)_{|F}=\ko_ {V_2} (1) $ using the adjunction formula and the identification $det(\mathcal{E}^{\ast})=\ko_{Q^4}(1)$. As before, this implies $\ko_X (H)_{|F}=\ko_{V_2}(1)$ and $\ko_X (E)_{|F}=\ko_{V_2}(3)$. 

    \textit{Subcase $1$}. $F=F_1 \cup F_2$ is isomorphic to the union of two planes meeting along a line.
    
        By restricting $\ko_X (H)$ and $\ko_X (E)$ to $F_i$, we have $\ko_X (H)_{|F_i}=\ko_{\PP^ 2}(1)$ and $\ko_X (E)_{|F_i}=\ko_{\PP^2}(3)$. In particular, $\sigma(F_1)$ and $\sigma(F_2)$ have degree $1$, i.e., they are planes too. Hence, $\sigma_{|_{F_1}}$ and $\sigma_{|_{F_2}}$ are isomorphisms, and thus the intersections $\mathcal{C}_1:=S\cap \sigma(F_1)\cong E\cap F$ and $\mathcal{C}_2:=S\cap \sigma(F_2)\cong E\cap F$ are cubics. Moreover, $\sigma(F_1)$ and $\sigma(F_2)$ are distinct, since $\sigma: X-E\rightarrow Q^{4}-S$ is an isomorphism and $F_1 ,F_2\nsubseteq E$. Thus, they intersect in a proper linear subspace, which must be a line, since $F_1 \cap F_2\cong \PP^1$. In summary, $\sigma: F\rightarrow \sigma(F)$ is an isomorphism, and thus we can apply the argument of \textit{Case 1} to show $rkPic(S_0)\geq 3>1$.
    
    \textit{Subcase $2$}. $F$ is isomorphic to a smooth quadric or a quadric cone.
    
        The computation above implies that $\sigma(F)$ has degree $2$, i.e., it is a quadric. Moreover, the finite birational map $\sigma: F\rightarrow\sigma(F)$ is an isomorphism, since $\sigma(F)$ is necessarily normal due to irreducibility. Hence, the intersection $\mathcal{C}:=S\cap \sigma(F)\cong E\cap F$ is a sextic in $\sigma(F)$.
    
        Let us compute the intersection number $K_S \mathcal{C}$. Since the sextic $\mathcal{C}$ is a Cartier divisor in $\sigma(F)$ by construction, the Segre class of $\mathcal{C}$ in $\sigma(F)$ is $s(\mathcal{C},\sigma(F))=1-3H_{\mathcal{C}}t$. Moreover, $\mathcal{C}$ is also connected by \cite[Ch. III, Corollary~7.9]{hartshorne}, so we have 
        \begin{align*}
            10=(5P_1+5P_2)(P_1+P_2)&=S\cdot \sigma(F) \\
            &=(S\cdot \sigma(F))^\mathcal{C}\\
            &=\left\{c ({N_{S/Q^4}}_{|\mathcal{C}})\frown s(\mathcal{C},\sigma(F))\right\}_0\\
            &=(H_S+{K_{S}})\mathcal{C}.
        \end{align*}
        This implies ${K_{S}}\mathcal{C}=4$. 
        
        Assume $rk(Pic(S_0))=1$ and write $\mathcal{C}=uH_{S}+vd+wd'$ in $Pic(S)$, where $u,v,w\in \ZZ$. Then we have the following system of equations
        \[\left\{ 
        \begin{array}{l}
        H_{S}\mathcal{C}=H_{S}(uH_{S}+vd+wd')=10u+v+w=6 \\
        {K_{S}}\mathcal{C}=(d+d')(uH_{S}+vd+wd')=2u-v-w=4.
        \end{array} 
        \right.\]
        
        It is easy to see that no integer solutions exist for this system. This is a contradiction.   
\end{proof}

A quadric in $\PP^5$ with a nodal singularity at worst is factorial by Grothendieck's parafactoriality theorem (see \cite[Chap. XI, Corollaire~3.14]{SGA2}). Hence, the base locus scheme $\mathfrak{B}'$ of the inverse $\varphi^{-1}:Z\dashrightarrow Q^4$ is defined and $\tau$ is the blow-up of $Z$ along $\mathfrak{B}'$. Its exceptional divisor $E'$ is reduced by the same proof of Proposition \ref{equations in PicX}. By Zariski's main theorem, we then have $\mathfrak{B}'_{red}=\{z\in Z : dim(\tau^{-1}(z))\geq 1\}$ and all fibers are connected. Therefore, $E'=Exc(R)$ and $\mathfrak{B}'_{red}=T$.

\begin{proof}[(Proof of Theorem \ref{main thm})]
    The first claim is just Proposition \ref{characterization of special bir trans of 4-quadric}. The converse claim follows from Proposition \ref{S induces a special birational transformation onto a quadric} and Proposition \ref{geometric constraints on S}. Assume that $rkPic(S_0)=1$. Then $T$ is smooth, and thus $\mathfrak{B}'=T$ by (\textit{a}) in Proposition \ref{equations in PicX}. Let $T_0$ be the minimal model of $T$. Clearly, the K3 surfaces $S_0$ and $T_0$ are derived-equivalent by \cite[Lemma~3.4]{Fano4foldsK3type}. Moreover, if $H_{S_0}$ is the induced polarization of degree $12$ on $S_0$, then the Fourier-Mukai partners of $S_0$ are isomorphic to either $S_0$ or the moduli space of stable sheaves over $S_0 $ with Mukai vector $(2,H_{S_0},3)\in \widetilde{H}(S_0 ,\ZZ)$ by \cite[Proposition~1.10]{oguiso.almost-primes}. However, we can show $S_0\ncong T_0$ in the same way we proved Theorem \ref{T0 is not S0}.
\end{proof}

\end{document}